\newtheorem{thm}{Theorem}[section]
\newtheorem{lem}{Lemma}[section]
\newtheorem{prop}{Proposition}[section]
\newtheorem{rem}{Remark}[section]
\newtheorem{dfn}{Definition}[section]
\newcommand{\pd}{\partial}
\newcommand{\gm}{\gamma}
\newcommand{\lm}{\lambda}
\newcommand{\R}{\mathbb{R}}
\newcommand{\N}{\mathbb{N}}
\newcommand{\p}[3]{\dfrac{\partial^{#3} #1}{\partial {#2}^{#3}}}
\newcommand{\av}[1]{\left| #1 \right|}
\newcommand{\Ln}[2]{\left\| #1 \right\|_{L^2(#2)}}
\newcommand{\Li}[2]{\left\| #1 \right\|_{L^{\infty}(#2)}}
\newcommand{\Lp}[3]{\left\| #1 \right\|_{L^{#2}(#3)}}
\newcommand{\Hn}[2]{\left\| #1 \right\|_{H^{#2}(\Omega)}}
\newcommand{\Hnd}[1]{\left\| #1 \right\|_{H^{2}_0(\Omega)}}
\newcommand{\Cn}[3]{\left\| #1 \right\|_{C^{#2}(#3)}}
\newcommand{\vk}{\kappa}
\newcommand{\vp}{\varphi}
\newcommand{\vq}{\psi}
\newcommand{\va}{\alpha}
\newcommand{\vb}{\beta}
\newcommand{\vd}{\delta}
\newcommand{\vz}{\zeta}
\newcommand{\vs}{\theta}
\newcommand{\ve}{\varepsilon}
\newcommand{\mC}{\mathcal{C}}
\newcommand{\mM}{\mathcal{M}}
\newcommand{\mN}{\mathcal{N}}
\newcommand{\con}[1]{\overline{#1}}
\begin{document}

\title{\bf Regularity of the obstacle problem \\ for the parabolic biharmonic equation}
\author{{\sc M. Novaga} and {\sc S. Okabe}}
\date{}

\maketitle

\begin{abstract}
We study the regularity of solutions to the obstacle problem for the parabolic biharmonic equation. 
We analyze the problem via an implicit time discretization, and  
we prove some regularity properties of the solution. 
\end{abstract}



\section{Introduction}

The purpose of this paper is to investigate the regularity properties 
of solutions to the obstacle problem for the parabolic biharmonic equation.

The parabolic biharmonic equation is a prototype of higher order parabolic equations, 
and has been intensively studied in the mathematical literature. We refer 
for instance to \cite{Ba,CM,CR,G,GG,GGS,GH,GP,T} and references therein, for a nonexhaustive list of works on this 
equation, and for a discussion of possible applications.

The obstacle problem for elliptic and parabolic PDE's is a topics which attracted a great interest 
in the past years. 
However, even if many studies are available on second order elliptic and parabolic equations
(see for instance \cite{C,CPS} and references therein), 
there are relatively few results for higher order obstacle problems, even in the linear fourth order case. 
In particular, while the elliptic obstacle problem for the biharmonic operator has been considered in 
\cite{BS,CF,CFT,F1,Schi},
to the best of our knowledge no result is available for the corresponding parabolic obstacle problem. 

We let $\Omega \subset \R^N$ be a bounded domain, with boundary of class $C^2$, 
and we let $f : \Omega \to \R$ be the obstacle function, satisfying 
\begin{align} \label{f-cond}
f \in C^2(\overline{\Omega}), \qquad f < 0 \,\,\, \text{on} \,\,\, \pd \Omega. 
\end{align}
We consider an initial datum $u_0 : \Omega \to \R$ such that
\begin{align} \label{initial-cond}
u_0 \in H^2_0(\Omega),  \qquad u_0 \ge f \,\, \text{a.e. in} \,\,{\Omega}.    
\end{align}
We recall that $u \in H^2_0(\Omega)$ implies $u=0$ and $\nabla u\cdot\nu^\Omega=0$ (weakly) 
on $\pd \Omega$, that is, $u$ satisfies the so-called Dirichlet boundary conditions on $\pd\Omega$ (see \cite{ADN,GGS}), 
where $\nu^{\Omega}$ denotes the unit outer normal of $\pd \Omega$. 

We shall consider the following fourth order parabolic obstacle problem:  
\begin{align} \label{P}
\begin{cases}
u_t(x,t) +\Delta^2 u(x,t)\ge 0 \quad & \text{in} \quad \Omega \times \R_+, \\
u_t(x,t)+ \Delta^2 u(x,t) =0\quad & \text{in} \quad \{(x,t)\in\Omega \times \R_+:\ u(x,t)>f(x)\}, \\
u(x,t)=0 \quad & \text{on} \quad \pd \Omega \times \R_+, \\
\nabla u(x,t)\cdot \nu^\Omega(x)=0 \quad & \text{on} \quad \pd \Omega \times \R_+, \\
u(x,t) \ge f(x) \quad & \text{in} \quad \Omega \times \R_+, \\
u(x,0)=u_0(x) \quad & \text{in} \quad \Omega. 
\end{cases} \tag{P}
\end{align}
In order to state the main result of this paper precisely, we define a weak solution of \eqref{P}. 
Let us set 
\begin{align}
\mathcal{K}
&:= \{ u \in L^{2}(0,T;H^{2}_{0}(\Omega)) \mid u_{t} \in L^{2}(\Omega \times (0,T)), \,\, 
      u \ge f \,\, \text{a.e. in} \,\, \Omega \times (0,T), \\
& \qquad \qquad \qquad \qquad \qquad \qquad 
      u(x,0) = u_{0}(x) \,\, \text{a.e. in} \,\, \Omega \} \notag
\end{align}
Then a weak solution of \eqref{P} is defined as follows:  
\begin{dfn} \label{def-weak}
$u$ is a weak solution of \eqref{P} if 
\begin{enumerate}
\item[{\rm (i)}] $u \in \mathcal{K}$, 
\item[{\rm (ii)}] For any $w \in \mathcal{K}$, it holds that 
\begin{align} \label{vari-ineq}
\int^{T}_{0}\!\!\!\! \int_{\Omega} \left[ u_t (w-u) + \Delta u \Delta(w-u) \right] \, dx dt \ge 0.   
\end{align}
\end{enumerate}
\end{dfn}
We now state the main result of this paper. 
\begin{thm} \label{main-thm}
Let $N \ge 1$. 
Let $f$ be a function satisfying \eqref{f-cond}. 
Then, for any initial data $u_0$ satisfying \eqref{initial-cond}, the problem \eqref{P} has a 
unique weak solution 
\begin{align} \label{MMO}
u \in L^\infty(\R_+;H^2_0(\Omega))\cap H^1_{loc}(\R_+;L^2(\Omega)), \quad {with\ }\,
u_t \in L^2(\R_+\times\Omega).
\end{align}
Furthermore, for a.e. $t \in \R_+$ the quantity
\begin{equation}\label{def-measure}
\mu_t := u_t(\cdot,t) + \Delta^2 u(\cdot,t)
\end{equation}
defines a Radon measure in $\Omega$, and for any $T>0$ there exists a constant $C>0$ such that  
\begin{align} \label{bdd-measure-sense}
\int^T_0 \mu_t(\Omega)^2 dt < C.   
\end{align}
Moreover, when $N \le 3$, the following regularity properties hold: 
\begin{enumerate}
\item[{\rm (i)}] $u \in L^2(0,T;W^{2, \infty}(\Omega))$ for any $T<+\infty$. 
In particular, if $N=1$, 
\begin{align} \label{conti-u-N1}
u \in C^{0, \vb}([0,T]; C^{1, \gm}(\Omega) \,\,\, \text{with} \,\,\, 0< \gm < \frac{1}{2} 
 \quad \text{and} \quad 0 < \vb < \frac{1-2\gm}{8}, 
\end{align}
if $N\in\{2,3\}$, 
\begin{align} \label{conti-N23}
u \in C^{0, \vb}([0,T]; C^{0, \gm}(\Omega)) \,\,\, \text{with} \,\,\, 0< \gm < \frac{4-N}{2} 
 \quad \text{and} \quad 0 < \vb < \frac{4-N-2\gm}{8}.  
\end{align}
\item[{\rm (ii)}] For any $0<T<+\infty$, it holds that 
\begin{align}\label{mu-supp}
{\rm supp}\, \mu_t \subset \{ (x,t) \in \Omega \times (0,T) \mid u(x,t) = f(x) \}
\end{align}
and $u$ satisfies \eqref{P} in the sense of distribution. 
\end{enumerate}
\end{thm}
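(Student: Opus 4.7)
The natural strategy is an implicit Euler-type time discretization (minimizing movements scheme). Fix $T>0$, let $h=T/M$ with $M\in\N$, set $u^0_h:=u_0$, and for $n=1,\ldots,M$ define $u^n_h$ as the minimizer over the closed convex set $\mC_0 := \{v\in H^2_0(\Omega)\mid v\ge f \text{ a.e.\ in } \Omega\}$ of
\begin{equation*}
\mathcal{F}_n(v) := \frac{1}{2h}\int_\Omega (v-u^{n-1}_h)^2\,dx + \frac{1}{2}\int_\Omega|\Delta v|^2\,dx .
\end{equation*}
Strict convexity and coercivity of $\mathcal{F}_n$ on $H^2_0(\Omega)$ yield unique solvability at each step, and first-order optimality gives the discrete variational inequality
\begin{equation*}
\int_\Omega \left[\frac{u^n_h - u^{n-1}_h}{h}(w - u^n_h) + \Delta u^n_h\,\Delta(w-u^n_h)\right] dx \ge 0, \qquad \forall\, w\in \mC_0 .
\end{equation*}

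Inserting the admissible choice $w=u^{n-1}_h\in\mC_0$ (valid by induction from \eqref{initial-cond}) and completing the square produces the discrete energy dissipation
\begin{equation*}
\tfrac{1}{2}\|\Delta u^n_h\|_{L^2(\Omega)}^2 + h\left\|\tfrac{u^n_h-u^{n-1}_h}{h}\right\|_{L^2(\Omega)}^2 \le \tfrac{1}{2}\|\Delta u^{n-1}_h\|_{L^2(\Omega)}^2 .
\end{equation*}
Summing in $n$ yields uniform bounds for the piecewise-constant and piecewise-affine interpolants in $L^\infty(0,T;H^2_0(\Omega))$, and for their discrete time derivative in $L^2(0,T;L^2(\Omega))$. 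Aubin--Lions compactness extracts a limit $u$ with the regularity claimed in \eqref{MMO}, and weak lower semicontinuity of $\|\Delta\cdot\|_{L^2}^2$ together with strong $L^2$-convergence allows passage to the limit in the discrete inequality, recovering \eqref{vari-ineq}. Uniqueness follows by a standard cross-test: for two weak solutions $u_1,u_2$, insert $w=u_2$ in \eqref{vari-ineq} for $u_1$ and $w=u_1$ for $u_2$; summing and using $u_1(0)=u_2(0)=u_0$ yields
\begin{equation*}
\tfrac{1}{2}\|u_1(T)-u_2(T)\|_{L^2(\Omega)}^2 + \int_0^T\|\Delta(u_1-u_2)\|_{L^2(\Omega)}^2\,dt \le 0,
\end{equation*}
forcing $u_1\equiv u_2$.

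Turning to \eqref{def-measure}--\eqref{bdd-measure-sense}: testing the discrete variational inequality with $w=u^n_h+\vp$, $\vp\in H^2_0(\Omega)$, $\vp\ge 0$, shows that $\mu^n_h := (u^n_h-u^{n-1}_h)/h + \Delta^2 u^n_h$ is a nonnegative distribution on $\Omega$, hence a Radon measure; passing to the limit identifies $\mu_t$ in \eqref{def-measure}. The localization \eqref{mu-supp} is obtained by testing \eqref{vari-ineq} with $w=u\pm\ve\vp$ for $\vp\in C^\infty_c(\{u>f\})$ and $\ve>0$ small, which forces $\langle\mu_t,\vp\rangle=0$. Since $f<0$ on $\pd\Omega$, the coincidence set $\{u^n_h=f\}$ lies in a compact subset $K\subset\subset\Omega$ uniformly in $n,h$, and picking $\xi\in C^\infty_c(\Omega)\subset H^2_0(\Omega)$ with $\xi\equiv 1$ on a neighbourhood of $K$ yields
\begin{equation*}
\mu^n_h(\Omega) \le \langle\mu^n_h,\xi\rangle = \int_\Omega \frac{u^n_h - u^{n-1}_h}{h}\,\xi\,dx + \int_\Omega \Delta u^n_h\,\Delta\xi\,dx \le C(\xi)\bigl(\|\partial_t^h u^n_h\|_{L^2(\Omega)} + \|\Delta u^n_h\|_{L^2(\Omega)}\bigr);
\end{equation*}
squaring, summing in $n$ with weight $h$, and using the a priori estimates above gives \eqref{bdd-measure-sense} at the discrete level, and hence in the limit.

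Finally, the refined regularity (i), valid for $N\le 3$, is obtained by viewing $u(\cdot,t)$ for a.e.\ $t$ as a solution of an elliptic biharmonic obstacle problem with $L^2$ right-hand side $-u_t(\cdot,t)$; invoking known elliptic regularity for this problem and Sobolev embeddings (sharp in dimensions $N\le 3$) upgrades the spatial regularity to $W^{2,\infty}(\Omega)$, and the H\"older-in-time exponents in \eqref{conti-u-N1}--\eqref{conti-N23} are then recovered by interpolation between this spatial regularity and $H^1(0,T;L^2(\Omega))$. The main obstacle is the quantitative measure bound \eqref{bdd-measure-sense}: the key point is the uniform-in-$(n,h)$ localization of ${\rm supp}\,\mu^n_h$ to a relatively compact subset of $\Omega$ enforced by $f<0$ on $\pd\Omega$, which is what makes the $H^2_0$ cutoff $\xi$ admissible and the estimate scale in the right norm. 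The elliptic regularity underlying (i) is a close second, since the $W^{2,\infty}$ bound cannot follow from the energy bounds alone.
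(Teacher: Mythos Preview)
Your overall strategy via minimizing movements coincides with the paper's, and the energy estimates, compactness, passage to the limit in the variational inequality, and the uniqueness argument are all fine. There is, however, a genuine gap in your derivation of the mass bound \eqref{bdd-measure-sense}, which the theorem asserts for \emph{every} $N\ge 1$.

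Your argument hinges on two claims: (a) the coincidence set $\{u^n_h=f\}$ lies in a fixed compact $K\subset\subset\Omega$ uniformly in $n,h$; and (b) ${\rm supp}\,\mu^n_h$ is contained in that coincidence set, so that a cutoff $\xi\in C^\infty_c(\Omega)$ with $\xi\equiv 1$ near $K$ yields $\mu^n_h(\Omega)=\langle\mu^n_h,\xi\rangle$. Both (a) and (b) require continuity of $u^n_h$, and the Sobolev embedding $H^2_0(\Omega)\hookrightarrow C^0(\overline\Omega)$ holds only for $N\le 3$. For $N\ge 4$ the discrete minimizers need not be continuous, the coincidence set is defined only a.e., and there is no a priori reason why ${\rm supp}\,\mu^n_h$ should avoid a boundary collar uniformly; indeed the paper proves the inclusion (b) (its Lemma~\ref{measure-coincide}) only under the restriction $N\le 3$. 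The paper obtains \eqref{bdd-measure-sense} in all dimensions by a different route: each discrete step is approximated by a penalized problem with a smooth solution $w_\varepsilon$ satisfying an Euler--Lagrange \emph{equation}, and one multiplies that equation by $w_\varepsilon-f$. Using $\beta_\varepsilon(w_\varepsilon-f)(w_\varepsilon-f)\ge 0$ together with $f<-c$ on a boundary collar $\Omega_\delta$ allows one to bound $\mu^\varepsilon_{i,n}(\Omega_\delta)$ by $\mu^\varepsilon_{i,n}(\Omega\setminus\Omega_\delta)$ plus energy terms; the interior piece is then handled by a compactly supported test function. No continuity of $w_\varepsilon$ (beyond what elliptic regularity gives for the penalized equation) or of the limit $u_{i,n}$ is needed.

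A second, smaller gap concerns part (i). You defer to ``known elliptic regularity'' for the biharmonic obstacle problem with $L^2$ right-hand side to get $u(\cdot,t)\in W^{2,\infty}(\Omega)$. The available results (Frehse; Caffarelli--Friedman) treat the homogeneous case $\Delta^2 u\ge 0$; extending them to $\Delta^2 u+g\ge 0$ with $g\in L^2(\Omega)$ is precisely the content of the paper's Lemmas~\ref{C-F-lem-1}--\ref{C-F-lem-3} and Theorem~\ref{u-W2infinity}, where the restriction $N\le 3$ enters through the bound $\|\Delta^{-1}g\|_{L^\infty}\le C\|g\|_{L^2}$. Moreover, you apply this to the limit $u(\cdot,t)$, which presupposes a pointwise-in-time elliptic variational inequality that you have not extracted from the space--time inequality \eqref{vari-ineq}; the paper sidesteps this by proving the $W^{2,\infty}$ estimate for $u_{i,n}$ at the discrete level, with constants summing to the bound $\tau_n\sum_i\|D^2 u_{i,n}\|_{L^\infty(\Omega)}^2\le C$, and then passing to the limit weakly-$*$ in $L^2(0,T;W^{2,\infty}(\Omega))$.
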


We need to impose the restriction on the dimension $N\le 3$
in order to obtain the $W^{2,\infty}$ estimate on the solution $u(\cdot,t)$
(see Remark \ref{rem} for further comments on this). 
However, in analogy with the regularity results in the stationary case
\cite{F1,CF}, one may expect that the $W^{2,\infty}$ estimate holds in any dimension. 
\smallskip

Let us point out that problem \eqref{P} corresponds to the gradient flow of a convex functional 
defined on the Hilbert space $L^2(\Omega)$, hence we can apply the general theory of maximal monotone 
operators developed in \cite{B}. Indeed, given $f$ as above, 
we can define the functional $E_f(u):L^{2}(\Omega) \to [0,+\infty]$ as 
\begin{align*}
E_f(u)=
\begin{cases}
\dfrac{1}{2} \displaystyle\int_\Omega |\Delta u|^2 \quad & \text{if} \quad 
     u\in H^2_0(\Omega) \quad \text{and} \quad u\geq f, \\
+\infty \quad & \text{otherwise}.
\end{cases}
\end{align*}
Notice that $E_f(u)$ is convex and lower semicontinuous on $L^{2}(\Omega)$, 
and the problem \eqref{P} corresponds to the gradient flow 
\begin{equation}\label{MF}
u_t + \pd E_f(u) \ni 0\,, \qquad\qquad u(0)= u_0\,,
\end{equation}
where $\pd E_f$ denotes the subdifferential of $E_f$ in $L^2(\Omega)$. 
In particular, given an initial datum $u_0\in H^2_0(\Omega)$ with $u_0 \ge f$, by the results in \cite{B} 
it follows that the evolution problem \eqref{MF} has a unique solution $u$ satisfying \eqref{MMO}. 

In this paper we characterize the solution $u$ by means of an implicit variational scheme, 
corresponding to the minimizing movements introduced by De Giorgi (see e.g. \cite{A}). 
This approach will allow us to extend some of the arguments in \cite{CF}, 
concerning the regularity of the elliptic obstacle problem for the biharmonic operator. 
We point out that the method does not rely on the linear structure of the problem and can be applied 
to more general fourth order parabolic equations. 
Indeed, one motivation for this work comes from the motion of planar closed curves by
the elastic flow, in presence of obstacles.  The elastic flow is the $L^{2}$ gradient flow of the elastic energy
\begin{align*}
\mathcal E(\gm)\ =\  \int_\gm \vk^2 \, ds, 
\end{align*}
where $\gm$ is a  planar closed curve and $\vk$ denotes the curvature of $\gm$. 
Among other applications, this flow models the evolution of lipid bilayer membranes
(see for instance \cite{ES}), where the presence of obstacles is a natural features.

Although this flow is governed by a fourth order quasilinear parabolic equation, 
we expect that the method of this paper can be adapted, and this will be subject 
of future investigation. 

The paper is organized as follows: in Section 2 we introduce the implicit scheme corresponding to 
problem \eqref{P}, by means of an appropriate variational problem; 
in Section 3 we study the regularity of solutions to the variational problem; 
in Section 4 we pass to the limit in the approximating scheme and prove Theorem \ref{main-thm}.





\subsection{Notation}
The equation in \eqref{P} is the $L^2$ gradient flow for the functional 
\begin{align*}
E(u)= \dfrac{1}{2} \int_\Omega \av{\Delta u(x)}^2 \, dx. 
\end{align*}
Let $T>0$, $n \in \N$, and set 
\begin{align*}
\tau_n = \dfrac{T}{n}. 
\end{align*}
Let us set $u_{0,n} = u_0$. For $i= 1, \cdots, n$, we define inductively $u_{i, n}$ as a solution of the minimum problem 
\begin{align} \label{incre-mini}
\min{\{ G_{i, n}(u) :\  u \in K \}} \tag{$M_{i,n}$},
\end{align}
where 
\begin{align} \label{G-func}
G_{i,n}(u):= E(u) + P_{i,n}(u) 
\end{align}
with 
\begin{align} \label{to-v}
P_{i,n}(u) := \dfrac{1}{2 \tau_n} \int_\Omega (u - u_{i-1,n})^2 \, dx,  
\end{align}
and $K$ is a convex set given by 
\begin{align*}
K:= \{ u \in H^2_0(\Omega) :\   u(x) \ge f(x) \,\, \text{a.e. in} \,\, \Omega\}. 
\end{align*}
In the following, we let
\begin{align} \label{discrete-velocity}
V_{i, n}(x):= \dfrac{u_{i, n}(x) - u_{i-1, n}(x)}{\tau_n}. 
\end{align}
\begin{dfn}{\bf (Piecewise linear interpolation)} \label{def-linear-inter}
Define $u_n : \Omega \times [0,T] \to \R$ as 
\begin{align} \label{discrete}
u_n(x,t):= u_{i-1,n}(x) + (t-(i-1)\tau_n) V_{i,n}(x)  
\end{align}
if $(x,t) \in \Omega \times [(i-1) \tau_n, i \tau_n]$ for $i= 1, \cdots, n$. 
\end{dfn}
\begin{dfn}{\bf (Piecewise constant interpolation)} \label{def-piece-inter}
Define $\tilde{u}_n : \Omega \times [0,T] \to \R$ as 
\begin{align} 
\tilde{u}_n(x,t)&:= u_{i,n}(x), \label{discrete-2} \\ 
V_n(x,t) & := V_{i, n}(x), \label{evolv-velocity}
\end{align}
if $(x,t) \in \Omega \times [(i-1) \tau_n, i \tau_n)$ for $i= 1, \cdots, n$. 
\end{dfn}


\section{Existence and regularity of minimizers of \eqref{incre-mini}}
We first mention a well-known compactness result in $H^2_0(\Omega)$ \cite{AF,ADN}. 
\begin{prop} \label{compactness}
The following embedding is compact\,{\rm :} 
\begin{align} \label{H20-compact}
H^2_0(\Omega) \hookrightarrow 
\begin{cases}
 C^{1,\gm}(\overline{\Omega}) \quad \text{for} \quad 0 < \gm < \dfrac{1}{2} & \quad \text{if} \quad N=1, \\
 C^{0,\gm}(\Omega) \quad \text{for} \quad 0 < \gm < 2 - \dfrac{N}{2} & \quad \text{if} \quad N=2, 3, \\
 L^q(\Omega) \quad \,\,\,\, \text{for} \quad 1 \le \forall q < +\infty & \quad \text{if} \quad N=4, \\  
 L^q(\Omega) \quad \,\,\,\, \text{for} \quad 1 \le \forall q < \dfrac{2N}{N-4} & \quad \text{if} \quad N \ge 5. 
\end{cases}
\end{align} 
\end{prop}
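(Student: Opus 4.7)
The proposition collects the standard Sobolev embedding and Rellich--Kondrachov compactness results for the space $H^2_0(\Omega)$. My plan is to reduce everything to classical embeddings for $H^2(\Omega)$ (or $H^2(\R^N)$) together with the compactness theorem; the distinction among the four regimes for $N$ comes purely from the arithmetic of the Sobolev exponents, reflecting whether $2 - N/2$ is positive, negative, or zero.

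First I would observe that, since $\partial \Omega \in C^2$ (and in fact functions in $H^2_0(\Omega)$ can be extended by zero to $\R^N$ while remaining in $H^2(\R^N)$), one has the continuous inclusion $H^2_0(\Omega) \hookrightarrow H^2(\Omega)$ with the full Sobolev machinery available. I would then split into the four regimes. For $N \ge 5$, the Gagliardo--Nirenberg--Sobolev inequality applied twice gives the continuous embedding $H^2(\Omega) \hookrightarrow L^{2^{\ast\ast}}(\Omega)$ with $2^{\ast\ast}=\tfrac{2N}{N-4}$, and Rellich--Kondrachov upgrades this to a compact embedding into $L^q(\Omega)$ for any subcritical $q < 2^{\ast\ast}$. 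For $N=4$, the critical exponent degenerates and one obtains $H^2(\Omega) \hookrightarrow L^q(\Omega)$ compactly for every finite $q$ (e.g.\ by interpolating the $H^1 \hookrightarrow L^p$ embedding for $p<\infty$ in $\R^4$ with a gradient bound coming from $\Delta u \in L^2$).

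In the subcritical range $N \le 3$ one has $2 - N/2 > 0$, so Morrey's embedding provides the Hölder continuity. Concretely, for $N \in \{2,3\}$ the value $\alpha := 2 - N/2 \in (0,1)$ is the optimal Hölder exponent, giving the continuous inclusion $H^2(\Omega) \hookrightarrow C^{0,\alpha}(\overline{\Omega})$; uniformly bounded subsets are therefore equicontinuous, and Arzelà--Ascoli upgrades the embedding to a compact one into $C^{0,\gamma}(\overline{\Omega})$ for any $\gamma < \alpha$. For $N=1$, I would instead apply Morrey's embedding to $u'$ (which lies in $H^1(\Omega)$ when $u\in H^2(\Omega)$): in one dimension $H^1 \hookrightarrow C^{0,1/2}(\overline{\Omega})$ continuously, whence $H^2(\Omega) \hookrightarrow C^{1,1/2}(\overline{\Omega})$, and again Arzelà--Ascoli yields the compact embedding into $C^{1,\gamma}(\overline{\Omega})$ for every $\gamma < 1/2$.

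There is no genuine obstacle here since every ingredient is classical; the only ``work'' is bookkeeping the Sobolev exponents. I would therefore present the proof as a short appeal to \cite{AF,ADN}, sketching the four cases in a single paragraph and explicitly noting the role of Rellich--Kondrachov (respectively Arzelà--Ascoli) in passing from the continuous embedding at the critical exponent to the compact embedding at any strictly subcritical exponent.
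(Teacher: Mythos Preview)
Your sketch is correct and follows the standard route via Sobolev--Morrey embeddings and Rellich--Kondrachov/Arzel\`a--Ascoli. Note that the paper itself does not prove this proposition at all: it is stated as a well-known fact with a citation to \cite{AF,ADN}, so your outline already goes beyond what the authors provide.
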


We now show the existence of minimizers of \eqref{incre-mini}. 
\begin{thm}{\bf (Existence of minimizers)} \label{existence-mini}
Let $f$ be a function satisfying \eqref{f-cond}. 
Let $u_0$ satisfy \eqref{initial-cond}. 
Then the problem \eqref{incre-mini} possesses a unique solution 
$u_{i,n} \in H^2_0(\Omega)$ with $u_{i,n}(x) \ge f(x)$ a.e. in $\Omega$ for each $i= 1, \cdots, n$. 
\end{thm}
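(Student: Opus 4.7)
The plan is to apply the direct method of the calculus of variations, arguing by induction on $i$. The base case is immediate: $u_{0,n} := u_0 \in K$ by \eqref{initial-cond}, so $K \neq \emptyset$ and $G_{1,n}(u_0)=E(u_0)<\infty$. Suppose now that $u_{i-1,n}\in K$ has already been constructed; then it is an admissible competitor for \eqref{incre-mini}, so $m := \inf_K G_{i,n}$ is finite and non-negative.

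For existence, I would select a minimizing sequence $\{v_k\}\subset K$ with $G_{i,n}(v_k)\to m$. The bound $E(v_k)\le G_{i,n}(v_k)\le m+1$ (for large $k$) controls $\|\Delta v_k\|_{L^2(\Omega)}$, and the equivalence of norms on $H^2_0(\Omega)$ (via Poincaré's inequality, using injectivity of $\Delta$ on $H^2_0$) upgrades this to a uniform $H^2_0$-bound. By reflexivity we may extract a subsequence with $v_k \rightharpoonup u_{i,n}$ weakly in $H^2_0(\Omega)$; Proposition \ref{compactness} (in particular the compact embedding into $L^2(\Omega)$, available in every dimension) then gives, along a further subsequence, strong $L^2$-convergence and pointwise a.e.\ convergence. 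The a.e.\ limit inherits $u_{i,n}\ge f$, so $u_{i,n}\in K$. Weak lower semicontinuity of $E$ (square of a Hilbert norm) together with continuity of $P_{i,n}$ under strong $L^2$-convergence gives
\begin{align*}
G_{i,n}(u_{i,n})\;\le\;\liminf_{k\to\infty}G_{i,n}(v_k)\;=\;m,
\end{align*}
so $u_{i,n}$ is a minimizer.

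Uniqueness follows from strict convexity of $G_{i,n}$ on the convex set $K$: $P_{i,n}$ is a multiple of the squared $L^2$-distance from the fixed function $u_{i-1,n}$ and hence strictly convex on $L^2(\Omega)$, while $E$ is convex; thus two distinct minimizers would produce a strictly smaller value at their midpoint, contradicting optimality. The only mildly subtle point in the whole argument is ensuring that the obstacle constraint $u\ge f$ is preserved under \emph{weak} $H^2_0$-convergence, which is exactly why the compact embeddings of Proposition \ref{compactness} are invoked to pass to an a.e.\ convergent subsequence. The rest is a textbook application of the direct method and imposes no restriction on the dimension $N$.
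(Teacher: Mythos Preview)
Your proposal is correct and follows essentially the same route as the paper's proof: direct method, uniform $H^2_0$-bound via the equivalence $\|\Delta u\|_{L^2}\simeq\|u\|_{H^2_0}$, weak compactness, upgrade to a.e.\ convergence via Proposition~\ref{compactness} to preserve the constraint, lower semicontinuity of $G_{i,n}$, and uniqueness by strict convexity. The only cosmetic differences are that you invoke strong $L^2$-convergence to handle $P_{i,n}$ (the paper uses Fatou's lemma, which is slightly weaker but sufficient) and you make the induction on $i$ explicit.
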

\begin{proof}
Fix $n \in \N$, $T>0$, and $i=1, \cdots, n$, arbitrarily. 
From \eqref{G-func}-\eqref{to-v} and the minimality of a solution $u$ to \eqref{incre-mini}, we obtain that 
\begin{align*}
E(u) \le G_{i, n}(u) \le G_{i,n}(u_{i-1,n}) = E(u_{i-1, n}),  
\end{align*}
and then 
\begin{align*}
0 \le \inf_{H^2_0(\Omega)}{G_{i, n}(u)} \le G_{i,n}(u_{i-1,n})= E(u_{i-1, n}) \le \cdots \le E(u_0). 
\end{align*}
Thus we can take a minimizing sequence $\{ u_j \} \subset H^2_0(\Omega)$ for \eqref{incre-mini} such that 
$u_j(x) \ge f(x)$ a.e. in $\Omega$ for each $j \in \N$ and $\sup_{j}{G_{i,n}(u_j)} < \infty$. 

Observing that the norm $\Ln{\Delta u}{\Omega}$ is equivalent to $\| u\|_{H^2_0(\Omega)}$ (see \cite{Li}), 
it follows from 
\begin{align*}
\Ln{\Delta u_j}{\Omega} = \sqrt{2 E(u_j)} \le \sqrt{2 E(u_0)} =  \Ln{\Delta u_0}{\Omega} 
\end{align*}
that $\{ u_j \}$ is uniformly bounded in $H^2_0(\Omega)$. 
Thus there exists $u \in H^2_0(\Omega)$ such that 
\begin{align} \label{u-H20-weak}
& u_j \rightharpoonup u \quad \text{in} \quad H^2_0(\Omega), 
\end{align}
in particular, 
\begin{align}\label{u-D2-weak}
\Delta u_j \rightharpoonup \Delta u \quad \text{in} \quad L^2(\Omega), 
\end{align}
up to a subsequence. Thanks to Proposition \ref{compactness}, we obtain that 
\begin{align*}
u_j \to u \quad \text{in} \quad 
\begin{cases}
 C^{1,\gm}(\bar{\Omega}) \quad \text{for} \quad 0 < \gm < \dfrac{1}{2} & \quad \text{if} \quad N=1, \\
 C^{0,\gm}(\Omega) \quad \text{for} \quad 0 < \gm < 2 - \dfrac{N}{2} & \quad \text{if} \quad N=2, 3, \\
 L^q(\Omega) \quad \,\,\,\, \text{for} \quad 1 \le \forall q < +\infty & \quad \text{if} \quad N=4, \\  
 L^q(\Omega) \quad \,\,\,\, \text{for} \quad 1 \le \forall q < \dfrac{2N}{N-4} & \quad \text{if} \quad N \ge 5. 
\end{cases}
\end{align*}
In particular 
\begin{align} \label{u-m-c}
u_j \to u \quad \text{a.e. in} \,\,\, \Omega \,\,\, \text{up to a subsequence.} 
\end{align}
Recalling $u_j \ge f$ a.e. in $\Omega$ for each $j \in \N$, \eqref{u-m-c} yields that $u \ge f$ a.e. in $\Omega$. 
Making use of Fatou's Lemma, we conclude that 
\begin{align} \label{mini-1}
P_{i, n}(u) \le \liminf_{j \to \infty}{P_{i, n}(u_j)}.  
\end{align}
Furthermore \eqref{u-D2-weak} implies 
\begin{align} \label{mini-2}
E(u) = \dfrac{1}{2} \Ln{\Delta u}{\Omega}^2 
\le \dfrac{1}{2} \liminf_{j \to \infty}{ \Ln{\Delta u_j}{\Omega}^2}
= \liminf_{j \to \infty}{E(u_j)}. 
\end{align}
Combining \eqref{mini-1} with \eqref{mini-2}, 
we see that $u \in H^2_0(\Omega)$ is the minimizer of \eqref{incre-mini} with $u \ge f$ a.e. in $\Omega$. 
The uniqueness follows from the fact that the functional $G_{i,n}(\cdot)$ is strictly convex. 
\end{proof}

Regarding the regularity of the minimizer $u_{i,n}$ obtained in Theorem \ref{existence-mini}, 
we start with the following: 
\begin{thm} \label{est-mini}
Let $u_{i,n}$ be the solution of \eqref{incre-mini} obtained by Theorem \ref{existence-mini}. 
Then, for any $n \in \N$,  it holds that 
\begin{gather}
\int^T_0 \!\!\! \int_\Omega \av{V_n(x,t)}^2 \, dx dt \le 2 E(u_0), \label{est-u_n-t} \\
\sup_{i}{ \Ln{\Delta u_{i,n}}{\Omega} } \le \sqrt{2 E(u_0)}. \label{u-in-uni-bd}
\end{gather}
\end{thm}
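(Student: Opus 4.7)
The plan is to use the minimality of $u_{i,n}$ against the natural competitor $u_{i-1,n}$, which is admissible since $u_{i-1,n} \ge f$ by induction (and $u_0 \ge f$ by \eqref{initial-cond}). Since $u_{i-1,n} \in K$, the inequality $G_{i,n}(u_{i,n}) \le G_{i,n}(u_{i-1,n})$ combined with $P_{i,n}(u_{i-1,n})=0$ gives the fundamental discrete energy inequality
\begin{align*}
E(u_{i,n}) + \frac{1}{2\tau_n} \int_\Omega (u_{i,n} - u_{i-1,n})^2 \, dx \le E(u_{i-1,n}).
\end{align*}

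From this inequality, estimate \eqref{u-in-uni-bd} follows immediately by induction on $i$: the energies $E(u_{i,n})$ form a non-increasing sequence, so $E(u_{i,n}) \le E(u_0)$ for all $i$, and since $\|\Delta u_{i,n}\|_{L^2(\Omega)}^2 = 2 E(u_{i,n})$, the claim is proved.

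For \eqref{est-u_n-t}, I would sum the discrete energy inequality over $i = 1, \dots, n$, obtaining a telescoping sum
\begin{align*}
\sum_{i=1}^n \frac{1}{2\tau_n} \int_\Omega (u_{i,n} - u_{i-1,n})^2 \, dx \le E(u_0) - E(u_{n,n}) \le E(u_0).
\end{align*}
Rewriting the left-hand side using \eqref{discrete-velocity}, namely $(u_{i,n} - u_{i-1,n})^2 = \tau_n^2 |V_{i,n}|^2$, and recalling from Definition \ref{def-piece-inter} that $V_n(x,t) = V_{i,n}(x)$ on $\Omega \times [(i-1)\tau_n, i\tau_n)$, the space-time integral of $|V_n|^2$ becomes
\begin{align*}
\int_0^T \!\!\! \int_\Omega |V_n(x,t)|^2 \, dx dt = \sum_{i=1}^n \tau_n \int_\Omega |V_{i,n}|^2 \, dx = \sum_{i=1}^n \frac{1}{\tau_n} \int_\Omega (u_{i,n} - u_{i-1,n})^2 \, dx,
\end{align*}
which is exactly twice the telescoping bound, yielding \eqref{est-u_n-t}.

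There is no real obstacle here: the only point to be careful about is the admissibility of $u_{i-1,n}$ as a competitor, which requires verifying $u_{i-1,n} \in K$ inductively (both the constraint $u_{i-1,n} \ge f$ and membership in $H^2_0(\Omega)$ are preserved by Theorem \ref{existence-mini}). Everything else is a direct consequence of the telescoping structure of the minimizing movements scheme.
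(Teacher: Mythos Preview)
Your proposal is correct and follows essentially the same approach as the paper: compare $u_{i,n}$ with the admissible competitor $u_{i-1,n}$ to obtain the discrete energy inequality $E(u_{i,n}) + P_{i,n}(u_{i,n}) \le E(u_{i-1,n})$, then deduce \eqref{u-in-uni-bd} by monotonicity of the energies and \eqref{est-u_n-t} by telescoping. The only cosmetic difference is that the paper proves \eqref{est-u_n-t} first and \eqref{u-in-uni-bd} second.
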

\begin{proof}
Fix $T>0$ and $n \in \N$. 
For each $i=1, \cdots, n$,  it follows from \eqref{G-func}-\eqref{to-v} and the minimality of $u_{i,n}$ that  
\begin{align} \label{Gin-bd}
G_{i, n}(u_{i,n}) \le G_{i,n}(u_{i-1, n})= E(u_{i-1, n}). 
\end{align}
Hence we get 
\begin{align*}
P_{i, n}(u_{i,n}) \le E(u_{i-1,n}) - E(u_{i,n}), 
\end{align*}
i.e., 
\begin{align} \label{pre-V-L2}
\dfrac{1}{2 \tau_n} \int_\Omega (u_{i,n} - u_{i-1,n})^2 \, dx 
 \le E(u_{i-1,n}) - E(u_{i,n}). 
\end{align}
Combining \eqref{pre-V-L2} with definitions \eqref{discrete-velocity} and \eqref{evolv-velocity}, we obtain 
\begin{align*}
\dfrac{1}{2} \int^T_0 \!\!\! \int_\Omega \av{V_n(x,t)}^2 \, dx dt 
& = \dfrac{1}{2} \sum^n_{i=1} \int^{i \tau_n}_{(i-1) \tau_n} \! \int_\Omega \av{V_{i,n}(x)}^2 \, dx dt \\
& \le \sum^n_{i=1} \left( E(u_{i-1,n}) - E(u_{i,n}) \right) 
  = E(u_0) - E(u_{n,n})
  \le E(u_0),  
\end{align*}
i.e., \eqref{est-u_n-t}. 

By \eqref{Gin-bd}, we obtain that $E(u_{i,n}) \le E(u_{i-1,n})$ for each $i=1, \cdots, n$, and then 
\begin{align} \label{pre-u-in-uni-bd}
\dfrac{1}{2} \int_\Omega (\Delta u_{i,n})^2 \, dx = E(u_{i,n}) \le E(u_0). 
\end{align}
It is clear that \eqref{pre-u-in-uni-bd} is equivalent to \eqref{u-in-uni-bd}. 
\end{proof}

By the definition of $u_{i,n}$, we see that 
\begin{align*}
&\int_{\Omega} | \Delta (u_{i,n} + \ve \vz)|^2 \, dx 
  + \dfrac{1}{2 \tau_n} \int_\Omega (u_{i,n}-u_{i-1,n}+ \ve \vz)^2 \, dx \\
& \qquad \ge \int_{\Omega} | \Delta u_{i,n}|^2 \, dx 
  + \dfrac{1}{2 \tau_n} \int_\Omega (u_{i,n}-u_{i-1,n})^2 \, dx 
\end{align*}
for any $\ve>0$ and $\vz \in H^2_0(\Omega)$ with $\vz \ge 0$. 
This implies 
\begin{align*}
\int_\Omega \Delta u_{i,n} \Delta \vz \, dx + \dfrac{1}{\tau_n} \int_\Omega (u_{i,n} - u_{i-1,n}) \vz \, dx \ge 0, 
\end{align*}
so that 
\begin{align} \label{D-measure-positive}
\mu_{i,n}:= \Delta^2 u_{i,n} + V_{i,n} \ge 0 
\end{align}
in the sense of the distribution. Hence $\mu_{i,n}$ is a measure in $\Omega$ (e.g., see \cite{S}).

Regarding the finiteness of $\mu_{i,n}$, we have the following: 

\begin{thm} \label{bilaplace-measure}
Let $u_{i,n}$ be the solution of \eqref{incre-mini} obtained by Theorem \ref{existence-mini}. 
Then $\mu_{i,n}$ defined in \eqref{D-measure-positive} is a measure in $\Omega$ for each $i=1, \cdots, n$. 
Moreover there exists a positive constant $C$ being independent of $n$ such that 
\begin{align} \label{me-bd}
\tau_n \sum^{n}_{i=1} \mu_{i,n}(\Omega)^2< C. 
\end{align}
\end{thm}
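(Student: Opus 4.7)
The plan is to pair the identity $\mu_{i,n}=\Delta^2 u_{i,n}+V_{i,n}$ against a fixed admissible ``barrier'' bounded strictly away from $f$. Using $f<0$ on $\partial\Omega$ and setting $c_0:=\tfrac12\min_{\partial\Omega}(-f)>0$, the set $\{f\ge-c_0\}$ is compactly contained in $\Omega$. Choosing $\chi:=M\psi$ with $\psi\in C^\infty_c(\Omega)$ equal to $1$ on $\{f\ge-c_0\}$ and $M\ge\max_{\overline{\Omega}}f+c_0$, we obtain $\chi\in K\cap C^\infty_c(\Omega)$ with
\begin{equation*}
\chi-f\ge c_0 \quad\text{on}\quad \overline{\Omega}.
\end{equation*}

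The key structural step is to show that $\mu_{i,n}$ concentrates on the coincidence set, i.e.\ $\int_\Omega(u_{i,n}-f)\,d\mu_{i,n}=0$. For any $\zeta\in C^\infty_c(\Omega)$ with $0\le\zeta\le 1$, the convex combination $v:=(1-\zeta)u_{i,n}+\zeta f=u_{i,n}-\zeta(u_{i,n}-f)$ lies in $K$, while $\zeta(u_{i,n}-f)$ is a nonnegative element of $H^2_0(\Omega)$ with compact support in $\Omega$. Substituting $v$ into the Euler--Lagrange variational inequality for \eqref{incre-mini} yields $\int_\Omega \zeta(u_{i,n}-f)\,d\mu_{i,n}\le 0$, while the reverse inequality follows from the positivity of $\mu_{i,n}$ and the integrand. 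A monotone convergence argument letting $\zeta\nearrow 1$ then gives the claim.

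Since $\chi-u_{i,n}=\chi-f\ge c_0$ holds $\mu_{i,n}$-a.e.\ on $\mathrm{supp}\,\mu_{i,n}$, approximating by $\phi_k:=\eta_k(\chi-u_{i,n})\in H^2_0(\Omega)$ with standard cutoffs $\eta_k\in C^\infty_c(\Omega)$ increasing to $1$, and passing to the limit (monotone convergence on the measure side, $H^2_0$-continuity on the Sobolev side) one obtains
\begin{align*}
c_0\,\mu_{i,n}(\Omega) &\le \int_\Omega(\chi-u_{i,n})\,d\mu_{i,n}=\int_\Omega \bigl[\Delta u_{i,n}\,\Delta(\chi-u_{i,n})+V_{i,n}(\chi-u_{i,n})\bigr]\,dx \\
&\le \|\Delta u_{i,n}\|_{L^2(\Omega)}\|\Delta\chi\|_{L^2(\Omega)}+\|V_{i,n}\|_{L^2(\Omega)}\|\chi-u_{i,n}\|_{L^2(\Omega)},
\end{align*}
after discarding the nonpositive $-\|\Delta u_{i,n}\|_{L^2(\Omega)}^2$. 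Theorem \ref{est-mini} uniformly bounds $\|\Delta u_{i,n}\|_{L^2}$ and hence $\|u_{i,n}\|_{L^2}$ (with $\chi$ fixed), so $\mu_{i,n}(\Omega)\le C(1+\|V_{i,n}\|_{L^2(\Omega)})$ for a constant $C$ independent of $i,n$. Squaring, summing over $i$, and applying \eqref{est-u_n-t} then gives \eqref{me-bd}.

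The main technical hurdle will be justifying the distributional pairing when the test function $\chi-u_{i,n}\in H^2_0(\Omega)$ is not compactly supported in $\Omega$. Positivity of the integrand $\mu_{i,n}$-a.e., supplied by the support characterization of the previous step, is precisely what makes the cutoff approximation work: it delivers both $\mu_{i,n}(\Omega)<+\infty$ and the identification of the measure pairing with the bilinear form $\int[\Delta u_{i,n}\,\Delta\phi+V_{i,n}\phi]\,dx$ in a single monotone-convergence argument.
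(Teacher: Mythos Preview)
Your argument is correct and takes a genuinely different route from the paper. The paper proceeds by penalization: it replaces the constraint by a penalty $\gamma_\varepsilon(v-f)$, obtains smooth minimizers $w_\varepsilon$ satisfying a classical Euler--Lagrange equation $\Delta^2 w_\varepsilon+\tau_n^{-1}(w_\varepsilon-u_{i-1,n})=-\beta_\varepsilon(w_\varepsilon-f)$, derives $\int(w_\varepsilon-f)\,d\mu^\varepsilon_{i,n}\le 0$ from $\beta_\varepsilon(s)s\ge 0$, then uses $f<-c$ on a boundary strip $\Omega_\delta$ to bound $\mu^\varepsilon_{i,n}(\Omega_\delta)$ by the interior mass $\mu^\varepsilon_{i,n}(\Omega\setminus\Omega_\delta)$ plus a velocity term, and finally passes to the limit $\varepsilon\to 0$. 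Your barrier-plus-complementarity approach is more direct and bypasses the approximation entirely: the identity $\langle\mu_{i,n},\zeta(u_{i,n}-f)\rangle=0$ does the work of the whole penalization scheme.

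There are two caveats worth flagging. First, to pass from the $H^2_0$-pairing $\langle\mu_{i,n},\zeta(u_{i,n}-f)\rangle=0$ to the measure integral $\int\zeta(u_{i,n}-f)\,d\mu_{i,n}=0$, and hence to the conclusion $u_{i,n}=f$ $\mu_{i,n}$-a.e., you are implicitly using continuity of $u_{i,n}$, i.e.\ $N\le 3$; indeed the paper's own support characterization (Lemma~\ref{measure-coincide}) is stated only under that restriction, while the penalized proof of Theorem~\ref{bilaplace-measure} is dimension-free. For $N\ge 4$ your argument can be rescued via $H^2$-capacity theory (the measure $\mu_{i,n}\in H^{-2}(\Omega)$ does not charge $H^2$-polar sets, and one works with the quasi-continuous representative of $u_{i,n}$), but this should be said explicitly. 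Second, the passage $\eta_k(\chi-u_{i,n})\to\chi-u_{i,n}$ in $H^2_0(\Omega)$ for cutoffs $\eta_k\nearrow 1$ is not automatic: the cross terms $(D^2\eta_k)(\chi-u_{i,n})$ and $D\eta_k\cdot D(\chi-u_{i,n})$ must be controlled, which requires a Hardy-type inequality for $H^2_0$ functions (boundedness of $w/d^2$ and $\nabla w/d$ in $L^2$, with $d={\rm dist}(\cdot,\partial\Omega)$). This is standard on $C^2$ domains but deserves a remark.
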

\begin{proof}
Fix $T>0$, $n \in \N$ and $i= 1, \cdots, n$ arbitrarily. 
For any $\ve>0$, we define 
\begin{align}
\gm_\ve(\lm)&= 
\begin{cases}
 \dfrac{\lm^2}{\ve} \quad & \text{if} \quad \lm<0, \\ 
 0  \quad & \text{if} \quad \lm>0, 
\end{cases} \label{def-gm} \\
\vb_\ve(\lm)&= \gm'_\ve(\lm). 
\end{align}
Let us consider the minimization problem: $\min_{v \in H^2_0(\Omega)} G^\ve_{i,n}(v)$, where 
\begin{align} \label{ve-minimum-p}
G^\ve_{i,n}(v):= \int_\Omega \left[ \dfrac{1}{2} (\Delta v)^2 + \dfrac{1}{2 \tau_n}(v-u_{i-1,n})^2 + \gm_\ve(v-f) \right]\, dx. 
\end{align}
A standard argument implies that the problem has a unique solution $w_\ve$. 
Since the variational principle yields that  for any $\vp \in H^2_0(\Omega)$ 
\begin{align*}
\int_\Omega \left[ \Delta w_\ve \Delta \vp + \dfrac{1}{\tau_n}(w_\ve - u_{i-1,n})\vp + \vb_\ve(w_\ve-f) \vp \right] \, dx=0, 
\end{align*} 
we have 
\begin{align} \label{ve-E-L-eq}
\Delta^2 w_\ve + \dfrac{1}{\tau_n}(w_\ve-u_{i-1,n})+ \vb_\ve(w_\ve-f)=0 \quad \text{in} \quad \Omega. 
\end{align}
The standard elliptic regularity theory implies that $w_\ve$ is a classical solution of \eqref{ve-E-L-eq}. 

For any $\vp \in H^2_0(\Omega)$ with $\vp \ge f$ a.e. on $\Omega$, 
the minimality of $w_\ve$ asserts that 
\begin{align} \label{w-vp}
G^\ve_{i,n}(w_\ve) \le G^\ve_{i,n}(\vp) 
 = \int_\Omega \left[ \dfrac{1}{2} (\Delta \vp)^2 + \dfrac{1}{2 \tau_n}(\vp-u_{i-1,n})^2 \right]\, dx. 
\end{align}
Since Theorem \ref{existence-mini} allows us to take $u_{i-1,n}$ as $\vp$ in \eqref{w-vp}, we have 
\begin{align} \label{w_ve-u_i-1}
G^\ve_{i,n}(w_\ve) \le \dfrac{1}{2} \int_\Omega (\Delta u_{i-1,n})^2 \, dx \le E(u_0), 
\end{align}
i.e., 
\begin{gather}
\dfrac{1}{2} \int_\Omega (\Delta w_\ve)^2 \, dx \le E(u_0), \label{w-bd-1} \\
\dfrac{1}{2 \tau_n} \int_\Omega (w_\ve -u_{i-1,n})^2 \, dx \le E(u_0), \label{w-bd-2}
\end{gather}
and 
\begin{align} \label{w-bd-gm}
\int_\Omega \gm_\ve(w_\ve -f) \, dx \le E(u_0). 
\end{align}
The inequality \eqref{w-bd-1} implies that there exist a sequence $\{ \ve' \}$ and a function $\bar{u} \in H^2_0(\Omega)$ 
such that, as $\ve' \to 0$,  
\begin{align}
& w_{\ve'} \rightharpoonup \bar{u} \quad \text{in} \quad H^2_0(\Omega), \label{w-D2-w} \\
& w_{\ve'} \to \bar{u} \quad \text{a.e. in} \quad \Omega. \label{w-a.e} 
\end{align}
By \eqref{def-gm} and \eqref{w-bd-gm}, we obtain 
\begin{align*}
\int_\Omega \av{(w_\ve - f)^-}^2 \, dx \le C \ve.  
\end{align*}
Combining \eqref{w-a.e} with Chebychev's inequality, we deduce that $(\bar{u} - f)^- =0$ a.e. in $\Omega$, 
i.e., $\bar{u} \ge f$ a.e. in $\Omega$. Thus it holds that $\bar{u} \in K$. 
In the following we shall prove that $\bar{u}$ is a minimizer of \eqref{incre-mini}, i.e., 
\begin{align*}
\min_{v \in V} \int_\Omega \left[ \dfrac{1}{2} (\Delta v)^2 + \dfrac{1}{2 \tau_n}(v- u_{i-1,n})^2  \right] \, dx. 
\end{align*}
To prove the assertion, fix $v \in K$ arbitrarily. Then we observe that 
\begin{align*}
\int_\Omega \left[ \dfrac{1}{2} (\Delta v)^2 + \dfrac{1}{2 \tau_n}(v- u_{i-1,n})^2  \right] \, dx 
 &= E(v) + P_{i,n}(v) + \int_\Omega \gm_\ve(v-f) \, dx \\
 &\ge E(w_\ve) + P_{i,n}(w_\ve)+ \int_\Omega \gm_\ve(w_\ve-f) \, dx \\
 &\ge \int_\Omega \left[ \dfrac{1}{2} (\Delta w_\ve)^2 + \dfrac{1}{2 \tau_n}(w_\ve- u_{i-1,n})^2  \right] \, dx. 
\end{align*}
Making use of \eqref{w-D2-w}-\eqref{w-a.e}, we have 
\begin{align*}
\int_\Omega \left[ \dfrac{1}{2} (\Delta v)^2 + \dfrac{1}{2 \tau_n}(v- u_{i-1,n})^2  \right] \, dx 
&\ge \liminf_{\ve' \to 0} \int_\Omega \left[ \dfrac{1}{2} (\Delta w_{\ve'})^2 
    + \dfrac{1}{2 \tau_n}(w_{\ve'}- u_{i-1,n})^2  \right] \, dx \\
&\ge \int_\Omega \left[ \dfrac{1}{2} (\Delta \bar{u})^2 + \dfrac{1}{2 \tau_n}(\bar{u} - u_{i-1,n})^2  \right] \, dx.
\end{align*}
This implies that $\bar{u}$ is a minimizer of \eqref{incre-mini}. 
Then  the uniqueness of minimizer yields $\bar{u}=u_{i,n}$. 

Recalling $\vb_\ve \le 0$, we find 
\begin{align*}
\Delta^2 w_\ve + \dfrac{1}{\tau_n}(w_\ve-u_{i-1,n})= -\vb_\ve(w_\ve-f) \ge 0, 
\end{align*}
i.e., 
\begin{align*}
\mu^\ve_{i,n}:= \Delta^2 w_\ve + \dfrac{1}{\tau_n}(w_\ve-u_{i-1,n})
\end{align*}
is a measure in $\Omega$. 
To begin with, we shall prove that $\mu^\ve_{i,n}$ converges to a measure as $\ve \to 0$ up to a subsequence. 
To do so, we claim that, for each $i$ and $n$, $\{ \mu^\ve_{i,n}(U) \}$ is uniformly bounded with respect 
to $\ve$ for any compact subset $U$ of $\Omega$. 
Indeed, for each $i$, $n$ and fixed $\psi \in C^\infty_0(\Omega)$ with $\psi \equiv 1$ on $U$ 
and $0 \le \psi < 1$ elsewhere, it follows from \eqref{w-bd-1} and \eqref{w-bd-2} that 
\begin{align} \label{mu-bd-K}
\mu^\ve_{i,n}(U) &= \int_{U} \psi d \mu^\ve_{i,n} 
\le \int_\Omega \psi d \mu^\ve_{i,n} \\ 
&= \int_\Omega \left[ \Delta w_\ve \Delta \psi + \dfrac{1}{\tau_n}(w_\ve - u_{i-1,n})\psi \right] \, dx \notag \\
&\le \left( \int_\Omega (\Delta w_\ve)^2 \,dx  \right)^{\frac{1}{2}} 
   \left( \int_\Omega (\Delta \psi)^2 \,dx  \right)^{\frac{1}{2}} \notag \\
& \qquad + \dfrac{1}{\sqrt{\tau_n}}\left( \dfrac{1}{\tau_n} \int_\Omega (w_\ve - u_{i-1,n})^2 \,dx \right)^{\frac{1}{2}} 
                           \left( \int_\Omega \psi^2 \,dx \right)^{\frac{1}{2}}.  \notag
\end{align}
Since \eqref{w_ve-u_i-1} yields that 
\begin{align} \label{E-difference}
\dfrac{1}{2 \tau_n} \int_\Omega (w_\ve - u_{i-1,n})^2 \, dx 
 &\le E(u_{i-1,n}) - E(w_\ve) - \int_\Omega \gm_\ve(w_\ve-f) \, dx \\
 &\le E(u_{i-1,n}) - E(w_\ve), \notag
\end{align}
and $\psi$ is fixed, combining \eqref{mu-bd-K} with \eqref{w-bd-1} and \eqref{E-difference}, we obtain 
\begin{align} \label{mu-bd-K-2}
\mu^\ve_{i,n}(U) \le C(U) \left[ (2 E(u_0))^{\frac{1}{2}} 
 + \left( \dfrac{E(u_{i-1,n}) - E(w_\ve)}{\tau_n} \right)^{\frac{1}{2}} \right].  
\end{align}
Then, for each $i$ and $n$,  there exist a sequence $\{ \ve'' \} \subset \{ \ve' \}$ and 
a measure $\bar{\mu}$ in $\Omega$ such that, as $\ve'' \to 0$,  
\begin{align}\label{mu-w}
\mu^{\ve''}_{i,n} \rightharpoonup \bar{\mu}, 
\end{align}
where \eqref{mu-w} means that for any function $\vz \in C_0(\Omega)$ 
\begin{align} \label{mean-mu-w}
\int_\Omega \vz d \mu^{\ve''}_{i,n} \to \int_\Omega \vz d \bar{\mu}. 
\end{align}
Furthermore, taking $\vz \in C^2_0(\Omega)$ in \eqref{mean-mu-w}, we find 
\begin{align*}
\int_\Omega \vz d \bar{\mu} 
 &= \lim_{\ve'' \to 0} \int_\Omega \left[ \Delta \vz \Delta w_{\ve''} 
     + \dfrac{1}{\tau_n} \vz (w_{\ve''} - u_{i-1,n}) \right]\, dx \\
 &=  \int_\Omega \left[ \Delta \vz \Delta \bar{u} + \dfrac{1}{\tau_n} \vz (\bar{u} - u_{i-1,n}) \right]\, dx,  
\end{align*}
so that $\bar{\mu}= \mu_{i,n}$. 

Next we shall prove that $\tau_n \sum^{n}_{i=1} \mu_{i,n}(U)$ is uniformly bounded with respect to $n$ for 
any compact set $U \subset \Omega$. 
Combining \eqref{mu-bd-K-2} with \eqref{w-D2-w} and \eqref{mu-w}, we see that  
\begin{align*}
\mu_{i,n}(U) 
  & \le C(U) \left( 2 E(u_0) \right)^{\frac{1}{2}} 
           + C(U) \liminf_{\ve \to 0} \left( \dfrac{E(u_{i-1,n}) - E(w_\ve)}{\tau_n} \right)^{\frac{1}{2}} \\
  & \le C(U) ( 2 E(u_0) )^{\frac{1}{2}} 
           + C(U) \left( \dfrac{E(u_{i-1,n}) - E(u_{i,n})}{\tau_n} \right)^{\frac{1}{2}}.  
\end{align*}
Multiplying $\tau_n$ and summing over $i=1, \cdots, n$, we obtain 
\begin{align*}
\tau_n \sum^{n}_{i=1}  \mu_{i,n}(U)^2 
 &\le C(U)' E(u_0) T + C(U)' \left[ E(u_0) - E(u_{n,n}) \right] \\
 &\le C(U)' E(u_0) (T + 1). 
\end{align*}

Finally we shall prove $\tau_n \sum^n_{i=1} \mu_{i,n}(\Omega)$ is uniformly bounded with respect to $n$. 
Multiplying the equation \eqref{ve-E-L-eq} by $w_{\ve} - f$, we find 
\begin{align} \label{ing-1-me-bd}
\int_\Omega \left[ \Delta^2 w_\ve + \dfrac{1}{\tau_n} ( w_\ve - u_{i-1,n}) \right] (w_\ve - f) \, dx 
 = - \int_\Omega \vb_\ve( w_\ve - f) (w_\ve -f) \, dx \le 0. 
\end{align}
Let $\Omega_\vd$ denote the intersection of $\Omega$ and $\vd$-neighborhood of $\pd \Omega$. 
Since $f < 0$ in $\pd \Omega$, there exists a positive constant $c$ such that 
\begin{align} \label{f-negative}
f(x) < -c \quad \text{in} \quad \Omega_\vd 
\end{align}
for $\vd>0$ small enough. From \eqref{f-negative}, we observe that 
\begin{align} \label{ing-2-me-bd}
&\int_\Omega \left[ \Delta^2 w_\ve + \dfrac{1}{\tau_n} ( w_\ve - u_{i-1,n}) \right] f \, dx \\
& \,\, \le -c \int_{\Omega_\vd} \left[ \Delta^2 w_\ve + \dfrac{1}{\tau_n} ( w_\ve - u_{i-1,n}) \right] \, dx 
 + \int_{\Omega \setminus \Omega_\vd} \left[ \Delta^2 w_\ve + \dfrac{1}{\tau_n} ( w_\ve - u_{i-1,n}) \right] f \, dx. \notag
\end{align}
On the other hand, it follows from \eqref{E-difference} and $\int_\Omega \Delta^2 w_\ve w_\ve \, dx \ge 0$ that  
\begin{align} \label{ing-3-me-bd}
\int_\Omega \left[ \Delta^2 w_\ve + \dfrac{1}{\tau_n} ( w_\ve - u_{i-1,n}) \right] w_\ve \, dx 
& \ge - \Ln{w_\ve}{\Omega} 
           \left( \dfrac{1}{\tau_n^2} \int_\Omega ( w_\ve - u_{i-1,n})^2 \, dx \right)^{\frac{1}{2}} \\
& \ge - (2 E(u_0))^{\frac{1}{2}} \left( \dfrac{E(u_{i-1,n}) - E(w_\ve)}{\tau_n} \right)^{\frac{1}{2}}. \notag
\end{align}
Then \eqref{ing-1-me-bd}, \eqref{ing-2-me-bd}, and \eqref{ing-3-me-bd} imply that 
\begin{align*}
c \int_{\Omega_\vd} d\mu^{\ve}_{i,n} 
 \le \Lp{f}{\infty}{\Omega \setminus \Omega_\vd} \int_{\Omega \setminus \Omega_\vd} d\mu^{\ve}_{i,n} 
       + (2 E(u_0))^{\frac{1}{2}} \left( \dfrac{E(u_{i-1,n}) - E(w_\ve)}{\tau_n} \right)^{\frac{1}{2}},  
\end{align*}
so that 
\begin{align*}
 \mu^{\ve}_{i,n}(\Omega_\vd) 
  \le c^{-1} \Lp{f}{\infty}{\Omega \setminus \Omega_\vd} \mu^{\ve}_{i,n}(\Omega \setminus \Omega_\vd) 
        + c^{-1} (2 E(u_0))^{\frac{1}{2}} \left( \dfrac{E(u_{i-1,n}) - E(w_\ve)}{\tau_n} \right)^{\frac{1}{2}}. 
\end{align*}
Thus we get 
\begin{align*}
 \mu^{\ve}_{i,n}(\Omega) 
  \le C_1 \mu^\ve_{i,n}(\Omega \setminus \Omega_\vd) 
  + c^{-1} (2 E(u_0))^{\frac{1}{2}} \left( \dfrac{E(u_{i-1,n}) - E(w_\ve)}{\tau_n} \right)^{\frac{1}{2}},   
\end{align*}
where $C_1= 1+ c^{-1} \Lp{f}{\infty}{\Omega \setminus \Omega_\vd}$. 
Then,  by \eqref{w-D2-w} and \eqref{mu-w} we obtain  
\begin{align*}
\mu_{i,n}(\Omega) 
  & \le C_1 \mu_{i,n}(\Omega \setminus \Omega_\vd) 
  + c^{-1} (2 E(u_0))^{\frac{1}{2}} \liminf_{\ve \to 0} 
    \left( \dfrac{E(u_{i-1,n}) - E(w_\ve)}{\tau_n} \right)^{\frac{1}{2}} \\
  & \le C_1 \mu_{i,n}(\Omega \setminus \Omega_\vd) 
  + c^{-1} (2 E(u_0))^{\frac{1}{2}} \left( \dfrac{E(u_{i-1,n}) - E(u_{i,n})}{\tau_n} \right)^{\frac{1}{2}}.  
\end{align*}
Since $\Omega \setminus \Omega_\vd$ is a compact subset of $\Omega$, 
multiplying $\tau_n$ and summing over $i=1, \cdots, n$, we observe that 
\begin{align*}
\tau_n \sum^{n}_{i=1}  \mu_{i,n}(\Omega)^2 
 &\le C_1^2 C_\vd + 2 c^{-2} E(u_0) (E(u_0) - E(u_{n,n})) \\
 &\le C_1^2 C_\vd + 2 c^{-2} E(u_0)^2, 
\end{align*}
where $C_\vd:= \tau_n \sum^{n}_{i=1} \mu_{i,n}(\Omega \setminus \Omega_\vd)^2$ is independent of $n$. 
This completes the proof. 
\end{proof}

\smallskip

In the rest of this section, we shall prove that $u_{i,n} \in W^{2,\infty}(\Omega)$ if $N \le 3$. 
In what follows, we denote the mollifier as follows: 
\begin{align*}
J_\ve (h)(x):= \int_{\R^N} j_\ve(x-y) h(y) \, dy, 
\end{align*}
where 
\begin{align*}
\quad j_\ve(x-y)= \dfrac{1}{\ve^n} j\left(\dfrac{x-y}{\ve} \right)
\end{align*}
and the function $j(x)=j_0(|x|)$ satisfies 
\begin{align*}
j_0 \in C^\infty(\R), \quad j_0(t)=0 \,\,\,\, \text{if} \,\,\,\, |t|>1, \quad  
j_0(t) \ge 0, \quad \int_{\R^N} j_0(|x|) \, dx=1. 
\end{align*}

Here we show a property of the support of $\mu_{i,n}$.  

\begin{lem} \label{supp-mu-i-n-1}
Let $x_{0} \in \Omega$. Assume that there exist a neighborhood $W$ of $x_{0}$ and a constant 
$\vd>0$ such that 
\begin{align} \label{cond-measure0}
J_{\ve} (u_{i,n})(x) - f(x) > \vd \quad \text{in} \quad W. 
\end{align}
Then $\mu_{i,n}=0$ in $W$. 
\end{lem}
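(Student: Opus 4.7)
The plan is to reduce the claim to the integral inequality $\int \psi\, d\mu_{i,n} \leq 0$ for every nonnegative $\psi \in C^{\infty}_0(W)$: since $\mu_{i,n}$ is a non-negative Radon measure, this (together with the trivial lower bound $\int \psi\, d\mu_{i,n} \geq 0$) forces $\mu_{i,n}|_W = 0$. To produce the reverse inequality I shall construct an admissible competitor for \eqref{incre-mini} whose first-order variation isolates $-t\psi$, while the strict averaged inequality $J_\ve u_{i,n} - f > \vd$ on $W$ absorbs any local drop below the obstacle.

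Concretely, after rescaling assume $\|\psi\|_\infty \leq \vd$ and fix a cutoff $\chi \in C^{\infty}_0(W)$ with $0 \leq \chi \leq 1$ and $\chi \equiv 1$ on $\mathrm{supp}\,\psi$. For $t \in [0,1]$ I set
\[
w_{t} := u_{i,n} + t\bigl[\chi(J_\ve u_{i,n} - u_{i,n}) - \psi\bigr].
\]
Outside $\mathrm{supp}\,\chi$ both correction terms vanish and $w_t = u_{i,n} \geq f$; on $\mathrm{supp}\,\chi \subset W$, the hypothesis together with $u_{i,n} \geq f$ yields
\[
w_t = (1-t\chi) u_{i,n} + t\chi J_\ve u_{i,n} - t\psi \geq (1-t\chi) f + t\chi (f+\vd) - t\psi = f + t(\chi\vd - \psi) \geq f.
\]
Since the perturbation is compactly supported in $\Omega$, $w_t \in H^2_0(\Omega)$ and hence $w_t \in K$. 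Differentiating the minimality inequality $G_{i,n}(w_t) \geq G_{i,n}(u_{i,n})$ at $t = 0^+$ and using the distributional identity behind \eqref{D-measure-positive} gives
\[
\int \psi\, d\mu_{i,n} \;\leq\; \int \chi(J_\ve u_{i,n} - u_{i,n})\, d\mu_{i,n}.
\]

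The main obstacle is now eliminating the right-hand side, since the symmetric competitor $u_{i,n} - t\chi(J_\ve u_{i,n} - u_{i,n})$ fails to be admissible wherever $J_\ve u_{i,n} > u_{i,n}$ and the sign of $J_\ve u_{i,n} - u_{i,n}$ on $\{u_{i,n} > f\}$ is not controlled by the hypothesis. To close the argument I plan to go through the penalty approximations from the proof of Theorem \ref{bilaplace-measure} (relabelled with parameter $\eta$ to avoid clashing with the mollifier scale $\ve$): the associated non-negative measures $\mu_{i,n}^{\eta} = -\vb_{\eta}(w_{\eta}-f)$ are supported in $\{w_{\eta} \leq f\}$ and converge weakly to $\mu_{i,n}$. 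The $H^2_0$-convergence $w_{\eta} \rightharpoonup u_{i,n}$ propagates the hypothesis to $J_\ve w_{\eta} > f + \vd/2$ on $W$ for $\eta$ small, and combined with interior elliptic regularity for \eqref{ve-E-L-eq} this should force $w_{\eta} > f$ pointwise on an interior subregion $W' \subset\subset W$, giving $\mu_{i,n}^{\eta}|_{W'} = 0$; the conclusion then follows by passing to the weak limit in $\eta$ and covering $W$ by such subregions.
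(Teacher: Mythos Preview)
Your competitor construction in steps 1--3 is correct and yields, via the first variation of $G_{i,n}$ at $t=0^+$,
\[
\int_\Omega \psi\,d\mu_{i,n} \;\le\; \int_\Omega \bigl[\Delta u_{i,n}\,\Delta\bigl(\chi(J_\ve u_{i,n}-u_{i,n})\bigr)+V_{i,n}\,\chi(J_\ve u_{i,n}-u_{i,n})\bigr]\,dx.
\]
(Rewriting the right-hand side as an integral of $\chi(J_\ve u_{i,n}-u_{i,n})$ against $\mu_{i,n}$ requires extending the pairing to merely $H^2$ test functions; this can be handled but is not automatic.)

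The substantive gap is step 4. The implication you need --- that $J_\ve w_\eta>f+\vd/2$ on $W$ forces $w_\eta>f$ pointwise on some $W'\subset\subset W$ --- is simply false. The mollified value $J_\ve w_\eta(x)$ is an average over a ball of radius $\ve$; nothing prevents the smooth function $w_\eta$ from dipping below $f$ on a set of diameter $\ll\ve$ while its $\ve$-averages remain large. Interior elliptic regularity for \eqref{ve-E-L-eq} gives smoothness of $w_\eta$, but no pointwise lower bound of this kind, and the penalty equation itself does not supply one either (large $\Delta^2 w_\eta$ on $\{w_\eta<f\}$ is consistent with $J_\ve w_\eta$ being large). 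So you have no mechanism to conclude $\mathrm{supp}\,\mu_{i,n}^\eta\cap W'=\emptyset$. Notice also that if this step did work, it would prove the lemma outright via weak convergence of $\mu_{i,n}^\eta$, rendering steps 1--3 unnecessary; the fact that you need them is a sign that the penalty route alone is not enough.

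The paper takes a different path: rather than freezing $\ve$ and trying to kill the error term through a second limit in $\eta$, it sends $\ve\downarrow 0$. Since $J_\ve u_{i,n}\to u_{i,n}$ in $H^2_0(\Omega)$, the right-hand side above tends to $0$, yielding $\int\psi\,d\mu_{i,n}\le 0$ directly. Equivalently, the paper argues that in the limit one may use the two-sided competitors $u_{i,n}\pm\vz$ and obtain \eqref{variation-2} with both signs, hence equality. Your inequality from step~3 is in fact already compatible with this: the missing move is to let $\ve\to 0$ in the error term (which of course requires reading the hypothesis \eqref{cond-measure0} as holding along $\ve\downarrow 0$, as the paper implicitly does when it later applies the lemma in Lemma~\ref{C-F-lem-2}). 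Your attempt to avoid that limit by switching to the penalty parameter does not close.
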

\begin{proof}
We extend $u_{i,n} \in H^{2}_{0}(\Omega)$ to become a function in $H^{2}(\R^{n})$. 
By the assumption \eqref{cond-measure0}, it holds that $u_{i,n} \pm \vz \in K$ for any 
$\vz \in C^{\infty}_{c}(W)$ with $\av{\vz} < \vd$. 
Since $u_{i,n}$ is the unique minimizer of \eqref{incre-mini}, one can verify that 
for any $\vz \in C^{\infty}_{c}(W)$ with $\av{\vz} < \vd$
\begin{align} \label{variation-1}
& \dfrac{1}{2} \int_{\Omega} \av{\Delta J_{\ve}(u_{i,n})}^{2} \, dx
 + \dfrac{1}{2 \tau_{n}} \int_{\Omega} (J_{\ve}(u_{i,n}) - u_{i-1,n} )^{2} \, dx \\
& \qquad \qquad 
\le \dfrac{1}{2} \int_{\Omega} \av{\Delta J_{\ve}(u_{i,n}) \pm \Delta \vz}^{2} \, dx 
 + \dfrac{1}{2 \tau_{n}} \int_{\Omega} (J_{\ve}(u_{i,n}) \pm \vz - u_{i-1,n} )^{2} \, dx.  \notag
\end{align}
Letting $\ve \downarrow 0$ in \eqref{variation-1}, we find 
\begin{align*} 
& \dfrac{1}{2} \int_{\Omega} \av{\Delta u_{i,n}}^{2} \, dx
 + \dfrac{1}{2 \tau_{n}} \int_{\Omega} (u_{i,n} - u_{i-1,n} )^{2} \, dx \\
& \qquad \qquad 
\le \dfrac{1}{2} \int_{\Omega} \av{\Delta u_{i,n} \pm \Delta \vz}^{2} \, dx 
 + \dfrac{1}{2 \tau_{n}} \int_{\Omega} (u_{i,n} \pm \vz - u_{i-1,n} )^{2} \, dx,   \notag
\end{align*}
so that 
\begin{align} \label{variation-2}
0 \le \pm \left( \int_{\Omega} \left\{ \Delta u_{i,n} \Delta \vz + V_{i,n} \vz \right\} \, dx \right) 
 + \dfrac{1}{2} \int_{\Omega} \av{\Delta \vz}^{2} \, dx + \dfrac{1}{2 \tau_{n}} \int_{\Omega} \vz^{2} \, dx 
\end{align}
for any $\vz \in C^{\infty}_{c}(W)$ with $\av{\vz} < \vd$. 
Fix $\vz \in C^{\infty}_{c}(W)$ with $\av{\vz} < \vd$ arbitrarily. 
Then we asserts from \eqref{variation-2} that 
\begin{align} \label{variation-3} 
0 \le \pm \ve \left( \int_{\Omega} \left\{ \Delta u_{i,n} \Delta \vz + V_{i,n} \vz \right\} \, dx \right) 
 + \dfrac{\ve^{2}}{2} \int_{\Omega} \av{\Delta \vz}^{2} \, dx + \dfrac{\ve^{2}}{2 \tau_{n}} \int_{\Omega} \vz^{2} \, dx.   
\end{align}
Since $\mu_{i,n} \ge 0$, it follows from \eqref{variation-3} that 
\begin{align*}
0 \le \dfrac{\int_{\Omega} \left\{ \Delta u_{i,n} \Delta \vz +  V_{i,n} \vz \right\} \, dx}
 {\frac{1}{2} \int_{\Omega} \av{\Delta \vz}^{2} \, dx + \frac{1}{2 \tau_{n}} \int_{\Omega} \vz^{2} \, dx} \le \ve. 
\end{align*}
Since $\ve>0$ is arbitral, this inequality implies that 
\begin{align*}
\int_{\Omega} \left\{ \Delta u_{i,n} \Delta \vz + V_{i,n} \vz \right\} \, dx=0. 
\end{align*}
This completes the proof. 
\end{proof}

\smallskip

We denote the inverse operator of the Laplacian by $\Delta^{-1}$, i.e., if $w$ satisfies 
\begin{align*}
\begin{cases}
-\Delta w = g \quad & \text{in} \quad \Omega, \\
w=0 \quad & \text{on} \quad \pd \Omega, 
\end{cases}
\end{align*}
then we write $\Delta^{-1} g= w$. We note that the estimate 
\begin{align} \label{elliptic}
\Hn{\Delta^{-1} g}{2} \le C \Ln{g}{\Omega}
\end{align}
is followed from the elliptic regularity (e.g., see \cite{GT}). 

We start with the following lemma: 

\begin{lem} \label{C-F-lem-1} 
For each $n \in \N$ and $i\in\{1, \cdots, n\}$, there exists a function $v_{i,n}$ satisfying the following
properties{\rm :}
\begin{enumerate}
\item[{\rm (a)}] $v_{i,n}= \Delta u_{i,n}+ \Delta^{-1} V_{i,n}$ a.e. in $\Omega${\rm ;} 
\item[{\rm (b)}] $v_{i,n}$ is upper semicontinuous in $\Omega${\rm ;}
\item[{\rm (c)}] For any $x^0 \in \Omega$ and for any sequence of balls $B_\rho(x^0)$ with center $x^0$ 
and radius $\rho$, it holds that 
\begin{align}
\dfrac{1}{\av{B_\rho(x^0)}} \int_{B_\rho(x^0)} v_{i,n} \, dx \downarrow v_{i,n}(x^0) 
 \quad \text{as} \quad \rho \downarrow 0. 
\end{align}
\end{enumerate}
\end{lem}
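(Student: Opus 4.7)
My plan is to realize $v_{i,n}$ as the canonical upper-semicontinuous representative of a subharmonic $L^2$-function; properties (b) and (c) will then follow from classical potential theory.

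First, I would check that $v_{i,n}$ is well defined as an $L^2$ equivalence class and that it is subharmonic in $\mathcal{D}'(\Omega)$. By Theorem \ref{existence-mini}, $\Delta u_{i,n} \in L^2(\Omega)$; and since $V_{i,n} \in L^2(\Omega)$, the elliptic estimate \eqref{elliptic} gives $\Delta^{-1} V_{i,n} \in H^2(\Omega) \cap H^1_0(\Omega)$. Thus the combination in (a) lies in $L^2(\Omega)$. For $\vp \in C^\infty_c(\Omega)$, integrating by parts twice and using the convention $-\Delta \Delta^{-1} = I$ (so the sign in (a) must be chosen consistently), one computes
\begin{align*}
\int_\Omega v_{i,n} \, \Delta \vp \, dx
= \int_\Omega \Delta u_{i,n} \, \Delta \vp \, dx + \int_\Omega V_{i,n} \vp \, dx
= \int_\Omega \vp \, d\mu_{i,n},
\end{align*}
the last equality being the distributional identity defining $\mu_{i,n}$ in \eqref{D-measure-positive}. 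Since $\mu_{i,n}$ is a nonnegative Radon measure by Theorem \ref{bilaplace-measure}, this reads $\Delta v_{i,n} = \mu_{i,n} \ge 0$ in $\mathcal{D}'(\Omega)$; i.e., $v_{i,n}$ is subharmonic.

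Given subharmonicity, I would invoke the classical fact that for any distributionally subharmonic $w \in L^1_{loc}(\Omega)$ the ball mean $\rho \mapsto \dfrac{1}{\av{B_\rho(x^0)}} \int_{B_\rho(x^0)} w(y) \, dy$ is nondecreasing in $\rho$ on $(0, \mathrm{dist}(x^0, \pd \Omega))$, as one sees by differentiating in $\rho$ and applying Green's identity on spheres together with $\Delta w \ge 0$. Redefining $v_{i,n}$ pointwise as its precise representative
\begin{align*}
v_{i,n}(x^0) := \lim_{\rho \downarrow 0} \dfrac{1}{\av{B_\rho(x^0)}} \int_{B_\rho(x^0)} v_{i,n}(y) \, dy,
\end{align*}
one obtains (c) from the monotonicity and (a) from Lebesgue differentiation. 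For (b), the same monotonicity applied to the mollifications $J_\ve v_{i,n}$---which are smooth and still subharmonic---gives $J_\ve v_{i,n}(x) \downarrow v_{i,n}(x)$ pointwise as $\ve \downarrow 0$, so $v_{i,n}$ is the pointwise infimum along a sequence $\ve_k \downarrow 0$ of the continuous functions $J_{\ve_k} v_{i,n}$, hence upper semicontinuous.

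The main technical subtlety, rather than a conceptual obstacle, is justifying the monotonicity of the spherical means when the subharmonic function is only in $L^2$. I would handle this by first establishing the monotonicity for the smooth subharmonic mollifications $J_\ve v_{i,n}$ and then passing to the limit $\ve \downarrow 0$, using $L^1_{loc}$-convergence of $J_\ve v_{i,n} \to v_{i,n}$ together with the monotone convergence theorem.
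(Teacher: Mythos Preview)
Your proposal is correct and follows essentially the same route as the paper: both arguments rest on the single observation that $\Delta(\Delta u_{i,n}+\Delta^{-1}V_{i,n})=\mu_{i,n}\ge 0$ in $\mathcal D'(\Omega)$, then use monotonicity of ball (or sphere) means for subharmonic functions---established first for smooth mollifications and passed to the limit---to define the precise representative, obtain (c), deduce (a) by Lebesgue differentiation, and get (b) as a decreasing limit of continuous functions. The only cosmetic differences are that the paper works directly with the continuous ball means $v^\rho_{i,n}\downarrow v_{i,n}$ to obtain upper semicontinuity, whereas you use $J_\ve v_{i,n}\downarrow v_{i,n}$; and you correctly flag the sign convention for $\Delta^{-1}$, which is indeed handled inconsistently in the paper but is immaterial to the argument.
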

\begin{proof}
Let us define 
\begin{align*}
v^\rho_{i,n}(x)= \dfrac{1}{\av{B_\rho(x)}} \int_{B_\rho(x)} 
                  \left[ \Delta u_{i,n}(y) + \Delta^{-1} V_{i,n}(y) \right] \, dy. 
\end{align*}
We claim that, for any $x^0 \in \Omega$, $v^\rho_{i,n}(x^0)$ is decreasing in $\rho$. 
Indeed, if $u_{i,n} \in C^\infty(\Omega)$, we obtain from Green's formula that 
\begin{align*}
\Delta u_{i,n}(x^0) + \Delta^{-1} V_{i,n}(x^0) 
& = \dfrac{1}{\av{\pd B_\rho(x^0)}} \int_{\pd B_{\rho}(x^0)} 
    \left[ \Delta u_{i,n} + \Delta^{-1} V_{i,n} \right] \, dS \\
& \qquad - \int_{B_\rho(x^0)} \left[ \Delta^2 u_{i,n}(x) + V_{i,n}(x) \right] G_\rho(x-x_0) \, dx,
\end{align*}
where $G_\rho$ is Green's function given by 
\begin{align} \label{def-G}
G_\rho(r)= 
\begin{cases}
\vspace{0.1cm}
\dfrac{1}{2}(r-\rho) \quad & \text{if} \quad N=1, \\
\vspace{0.1cm}
\dfrac{1}{2\pi} \log{\dfrac{\rho}{r}} \quad & \text{if} \quad N=2, \\
\dfrac{1}{N(N-2) \omega(N)} (r^{2-N} - \rho^{2-N}) \quad & \text{if} \quad N \ge 3. 
\end{cases}
\end{align}
Remark that $\omega(N)$ denotes the volume of unit ball in $\R^N$. 
From \eqref{D-measure-positive} and $G_{\rho'} > G_\rho$ if $\rho' > \rho$, we get 
\begin{align*}
\dfrac{1}{\av{\pd B_\rho(x^0)}} \int_{\pd B_\rho(x^0)} \left[ \Delta u_{i,n} + \Delta^{-1} V_{i,n} \right] \, dS
 \le \dfrac{1}{\av{\pd B_{\rho'}(x^0)}} 
    \int_{\pd B_{\rho'}(x^0)} \left[ \Delta u_{i,n} + \Delta^{-1} V_{i,n} \right] \, dS, 
\end{align*}
and, by integration, 
\begin{align} \label{average-1}
&\dfrac{1}{\av{B_\rho(x^0)}} \int_{B_\rho(x^0)} 
    \left[ \Delta u_{i,n}(x) + \Delta^{-1} V_{i,n}(x) \right] \, dx \\
& \qquad \le \dfrac{1}{\av{B_{\rho'}(x^0)}} \int_{B_{\rho'}(x^0)} 
    \left[ \Delta u_{i,n}(x) + \Delta^{-1} V_{i,n}(x) \right] \, dx. \notag
\end{align}

For general $u_{i,n} \in H^2_0(\Omega)$ with \eqref{D-measure-positive}, we introduce the $C^\infty$ functions 
\begin{align*}
U_m := J_{1/m}(\Delta u_{i,n} + \Delta^{-1} V_{i,n}).  
\end{align*}
Since $\Delta U_m \ge 0$, we can deduce from \eqref{average-1} that 
\begin{align*}
\dfrac{1}{\av{B_\rho(x^0)}} \int_{B_\rho(x^0)} U_m \, dx 
\le \dfrac{1}{\av{B_{\rho'}(x^0)}} \int_{B_{\rho'}(x^0)} U_m \, dx.
\end{align*} 
Letting $m \to +\infty$, we obtain \eqref{average-1} for general $u_{i,n} \in H^{2}_{0}(\Omega)$.  
Thus we conclude that 
\begin{align}
v^\rho_{i,n}(x) \downarrow v_{i,n}(x) \quad \text{as} \quad \rho \downarrow 0, 
\end{align}
where $v_{i,n}$ is a some function. 

Since $v^\rho_{i,n}$ is continuous in $x$, we see that $v_{i,n}$ is upper semicontinuous. 
Recalling that $\Delta u_{i,n} + \Delta^{-1} V_{i,n} \in L^2(\Omega)$, we also obtain that, 
as $\rho \downarrow 0$, 
\begin{align*}
v^\rho_{i,n} \to \Delta u_{i,n} + \Delta^{-1} V_{i,n} \quad \text{a.e. in} \quad \Omega. 
\end{align*}
Consequently we have 
\begin{align*}
v_{i,n}= \Delta u_{i,n} + \Delta^{-1} V_{i,n} \quad \text{a.e. in} \quad \Omega.  
\end{align*}
This completes the proof. 
\end{proof}

\begin{lem} \label{C-F-lem-2}
Let $1 \le N \le 7$, 
then for any point $x^0 \in \Omega$ that belongs to 
the support of $\mu_{i,n}$, it holds that 
\begin{align}
v_{i,n}(x^0) - \Delta^{-1} V_{i,n}(x^0) \ge \Delta f(x^0) 
\end{align}
for each $n \in \N$ and $i=1, \cdots, n$. 
\end{lem}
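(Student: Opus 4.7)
\medskip

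The plan is a Caffarelli--Friedman style argument by contradiction, combining the strong maximum principle for superharmonic $H^1$ functions with the mollified characterization of $\mathrm{supp}\,\mu_{i,n}$ provided by Lemma \ref{supp-mu-i-n-1}. Suppose, for contradiction, that $v_{i,n}(x^0)-\Delta^{-1}V_{i,n}(x^0)<\Delta f(x^0)$. I first observe that the function $h:=v_{i,n}-\Delta^{-1}V_{i,n}-\Delta f$ is upper semicontinuous on $\Omega$: by Lemma \ref{C-F-lem-1}(b), $v_{i,n}$ is upper semicontinuous; $\Delta f\in C^0(\overline{\Omega})$ because $f\in C^2(\overline{\Omega})$; and the hypothesis $N\le 7$ ensures $\Delta^{-1}V_{i,n}\in C^0(\Omega)$. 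Indeed $V_{i,n}\in H^2_0(\Omega)$ embeds into $L^q(\Omega)$ for some $q>N/2$ precisely when $N<8$ (since the Sobolev critical exponent is $2N/(N-4)$ for $N\ge 5$), so \eqref{elliptic} and the embedding $W^{2,q}\hookrightarrow C^0$ for $q>N/2$ yield continuity of $\Delta^{-1}V_{i,n}$. Consequently $\{h<0\}$ is open, and I may choose $r>0$ and $\eta>0$ so that $\overline{B_r(x^0)}\subset \Omega$ and $h\le -\eta$ on $\overline{B_r(x^0)}$. Using Lemma \ref{C-F-lem-1}(a), this gives
\begin{align*}
\Delta u_{i,n}(x) \le \Delta f(x) - \eta \quad \text{for a.e. } x\in B_r(x^0).
\end{align*}

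Next, I introduce the quadratic barrier $\vp(y):=\frac{\eta}{2N}(r^2-|y-x^0|^2)$, which is smooth, vanishes on $\partial B_r(x^0)$, and satisfies $\Delta \vp=-\eta$. Then $w:=u_{i,n}-f-\vp\in H^1(B_r(x^0))$ obeys $\Delta w\le 0$ a.e. in $B_r(x^0)$, and its trace on $\partial B_r(x^0)$ is nonnegative, since $u_{i,n}\ge f$ a.e. in $\Omega$ and $\vp=0$ on $\partial B_r(x^0)$. Testing the weak formulation of $-\Delta w\ge 0$ with $w^-\in H^1_0(B_r(x^0))$ gives $\int_{B_r(x^0)}|\nabla w^-|^2\,dx=0$, hence $w\ge 0$ a.e.\ in $B_r(x^0)$. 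In particular,
\begin{align*}
u_{i,n}(y)-f(y) \ge \vp(y) \ge \vd:=\frac{3\eta r^2}{8N}>0 \quad \text{for a.e. } y\in B_{r/2}(x^0).
\end{align*}

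Finally, I transfer this pointwise a.e.\ bound to the mollified quantity and apply Lemma \ref{supp-mu-i-n-1}. For $\ve<r/4$ and $x\in B_{r/4}(x^0)$, the support of $j_\ve(x-\cdot)$ is contained in $B_{r/2}(x^0)$, so
\begin{align*}
J_\ve u_{i,n}(x) \ge J_\ve f(x) + \vd.
\end{align*}
Since $f\in C^2(\overline{\Omega})$, we have $\|J_\ve f-f\|_{L^\infty(B_{r/4}(x^0))}=O(\ve^2)$, and therefore $J_\ve u_{i,n}(x)-f(x)\ge \vd/2$ on $B_{r/4}(x^0)$ for all sufficiently small $\ve$. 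Lemma \ref{supp-mu-i-n-1} then forces $\mu_{i,n}=0$ in $B_{r/4}(x^0)$, contradicting $x^0\in\mathrm{supp}\,\mu_{i,n}$.

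The main obstacle, and the reason for the dimensional restriction $N\le 7$, is establishing the continuity of $\Delta^{-1}V_{i,n}$ needed to upgrade Lemma \ref{C-F-lem-1}(a) to a pointwise inequality at $x^0$; once this and the upper semicontinuity of $v_{i,n}$ are in hand, the remainder of the proof is a straightforward application of the weak maximum principle to a barrier and the mollification criterion of Lemma \ref{supp-mu-i-n-1}.
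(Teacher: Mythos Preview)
Your proof is correct and, like the paper's, hinges on the same three ingredients: the upper semicontinuity of $v_{i,n}$ from Lemma~\ref{C-F-lem-1}, the continuity of $\Delta^{-1}V_{i,n}$ for $N\le 7$, and the mollified characterization of $\mathrm{supp}\,\mu_{i,n}$ from Lemma~\ref{supp-mu-i-n-1}. However, the route you take is genuinely different from the paper's. The paper argues \emph{directly}: it takes the contrapositive of Lemma~\ref{supp-mu-i-n-1} to produce sequences $x_m\to x^0$, $\ve_m\downarrow 0$ with $J_{\ve_m}u_{i,n}(x_m)-f(x_m)\to 0$, writes out the Green (mean-value) representations of $J_\ve u_{i,n}(x_m)$ and $J_\ve f(x_m)$ on balls $B_{\rho}(x_m)$, subtracts, and passes to the limit $m\to\infty$ and then $\rho\to 0$ using the upper semicontinuity of $v_{i,n}-\Delta^{-1}V_{i,n}-\Delta f$ at each stage. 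Your argument instead runs by \emph{contradiction}: from $h(x^0)<0$ and upper semicontinuity you get $\Delta(u_{i,n}-f)\le -\eta$ a.e.\ on a ball, insert a quadratic barrier, and apply the weak maximum principle in $H^1$ to obtain a uniform lower bound $u_{i,n}-f\ge\vd>0$ on a smaller ball, which via mollification contradicts Lemma~\ref{supp-mu-i-n-1}. Your approach avoids the explicit Green-function bookkeeping (the $\lambda_{\ve,m}$ error terms and the double limit $m\to\infty$, $\rho\to 0$) and is arguably more elementary; the paper's approach, on the other hand, tracks the Caffarelli--Friedman argument closely and sets up the representation formula machinery that is reused verbatim in Lemma~\ref{C-F-lem-3}. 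One minor remark: your citation of \eqref{elliptic} for the continuity of $\Delta^{-1}V_{i,n}$ is slightly off, since \eqref{elliptic} is only the $L^2$--$H^2$ estimate; you are implicitly invoking the $L^q$ Calder\'on--Zygmund estimate (or, as the paper does, $V_{i,n}\in H^2_0\Rightarrow \Delta^{-1}V_{i,n}\in H^4\hookrightarrow C^0$ for $N\le 7$). This is standard and does not affect the validity of your argument.
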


\begin{proof}
With the aid of Lemma \ref{supp-mu-i-n-1}, we asserts that ${\rm supp} \mu_{i,n}$ is contained in the set of points 
where \eqref{cond-measure0} is not satisfies.  
Thus, if $x^0 \in {\rm supp} \, \mu_{i,n}$, 
then there exist sequences $x_m \to x^0$ and $\ve_m \downarrow 0$ such that 
\begin{align} \label{to-coincide}
(J_{\ve_m} u_{i,n})(x_m) - f(x_m) \to 0. 
\end{align}
By Green's formula, we have 
\begin{align} \label{Green-u}
(J_\ve u_{i,n})(x_m)= \dfrac{1}{\av{S_{\rho,m}}} \int_{S_{\rho,m}} J_\ve u_{i,n} \, dS 
    - \int_{B_{\rho,m}} \Delta (J_\ve u_{i,n})(y) G_\rho(x_m-y) \, dy, 
\end{align}
where $B_{\rho,m} := \{ \av{y-x_m} < \rho \}$, $S_{\rho,m} := \pd B_{\rho,m}$.  
Similarly it holds that 
\begin{align} \label{Green-f}
(J_\ve f)(x_m)= \dfrac{1}{\av{S_{\rho,m}}} \int_{S_{\rho,m}} J_\ve f \, dS 
    - \int_{B_{\rho,m}} \Delta (J_\ve f)(y) G_\rho(x_m-y) \, dy.  
\end{align}
Since it follows from $u_{i,n} \ge f$, also $J_\ve u \ge J_\ve f$, that 
\begin{align*}
\dfrac{1}{\av{S_{\rho,m}}} \int_{S_{\rho,m}} J_\ve u_{i,n} \, dS 
 \ge \dfrac{1}{\av{S_{\rho,m}}} \int_{S_{\rho,m}} J_\ve f \, dS,   
\end{align*}
using the inequality and \eqref{to-coincide}, we obtain, by comparing \eqref{Green-u} with \eqref{Green-f}, that 
\begin{align} \label{liminf-positive}
\liminf_{m \to +\infty} \left[ \int_{B_{\rho,m}} \!\!\!\!\!\!\! \Delta (J_\ve u_{i,n})(y) G_\rho(x_m-y) \, dy
     - \int_{B_{\rho,m}} \!\!\!\!\!\!\! \Delta (J_\ve f)(y) G_\rho(x_m-y) \, dy \right] \ge 0.  
\end{align}
Using a change of variables and integrating by parts, we can reduce the first term in \eqref{liminf-positive} to 
\begin{align} \label{change}
\int_{B_{\rho,m}} \!\!\!\!\!\! \Delta (J_\ve u_{i,n})(y) \cdot G_\rho(x_m-y) \, dy 
 = \int_{B_{\rho,m}} \!\!\!\!\! (J_\ve G_\rho)(x_m-y) \Delta u_{i,n}(y) \, dy + \lm_{\ve,m}, 
\end{align}
where 
\begin{align*}
\lm_{\ve,m}:= -\int_{B_{\rho+\ve,m} \setminus B_{\rho,m}} \!\!\!\!\!\!\!\!\!\!\!\!\!\!\!\!\!\!\!\!
                   G_\rho (x_m-y) \Delta (J_\ve u_{i,n})(y) \, dy 
           + \int_{B_{\rho+\ve,m}}\!\!\!\!\!\!\!\!\! G_\rho(x_m-y) 
           \int_{B_{\rho+ 2 \ve,m} \setminus B_{\rho,m}} \!\!\!\!\!\!\!\!\!\!\!\!\!\!\!\!\!\!\!\!\!\! j_\ve (y-z) 
                  \Delta u_{i,n}(z) \, dy
\end{align*}
and $\lm_{\ve,m} \to 0$ as $\ve \downarrow 0$ uniformly in  $m$. 
A similar relation holds for the second integral in \eqref{liminf-positive}. 
Therefore we obtain 
\begin{align} \label{liminf-positive-1}
\liminf_{m \to +\infty} \int_{B_{\rho,m}} \!\!\!\!\! (J_{\ve_m} G_\rho)(x_m-y) 
                                [v_{i,n}(y) - \Delta^{-1} V_{i,n}(y) - \Delta f(y)] dy \ge 0. 
\end{align}
Recalling that $V_{i,n} \in H^{2}_{0}(\Omega)$ for each $n \in \N$, 
we see that $\Delta^{-1} V_{i,n} \in H^{4}(\Omega)$ by the elliptic regularity (see \cite{GT}). 
Then it follows from Sobolev's embedding that $\Delta^{-1}V_{i,n}$ is continuious in $\Omega$ for $1 \le N \le 7$. 
Furthermore since $v_{i,n}$ is upper semicontinuous, there exists a point $x_{m,\rho} \in \con{B}_{\rho,m}$ 
such that the maximum of the function $v_{i,n}(x) - \Delta^{-1} V_{i,n}(x) - \Delta f(x)$ in $ \con{B}_{\rho,m}$ 
attains at $x= x_{m,\rho}$. 
Then \eqref{liminf-positive-1} implies that 
\begin{align*}
v_{i,n}(x_{m,\rho}) - \Delta^{-1} V_{i,n}(x_{m,\rho}) -\Delta f(x_{m,\rho}) 
 \ge -\vd_m, \quad \vd_m \to 0 \quad \text{as} \quad m \to +\infty. 
\end{align*}
We may assume that $x_{m,\rho} \to x_\rho$ for some $x_\rho \in \{ y \in \R^N :\  | y-x^0| \le \rho\}$, 
for the sequence $\{ x_{m,\rho}\}$ is bounded. 
By the upper semicontinuity of $v_{i,n}$, as $m \to +\infty$, it holds that  
\begin{align*}
v_{i,n}(x_\rho) - \Delta^{-1} V_{i,n}(x_\rho) - \Delta f(x_\rho) \ge 0. 
\end{align*}
Letting $\rho \to 0$ and using again the upper semicontinuity of $v_{i,n}$, we see that $x_\rho \to x^0$ and 
\begin{align*}
v_{i,n}(x_0) - \Delta^{-1} V_{i,n}(x^0) - \Delta f(x^0) \ge 0. 
\end{align*}
\end{proof}

Making use of Lemmas \ref{C-F-lem-1} and \ref{C-F-lem-2}, we can obtain a local bound of $\Delta u_{i,n}$: 

\begin{lem} \label{C-F-lem-3}
Let $N \le 3$. 
It holds that 
\begin{align}
\Delta u_{i,n} \in L^\infty_{loc}(\Omega)  
\end{align}
for each $n \in \N$ and $i=1, \cdots, n$. Moreover, for any $R>0$ with $\con{B}_R \subset \Omega$, 
there exist positive constants $C_1$, $C_2$, and $C_3$ being independent of $i$ and $n$ such that 
\begin{align} \label{uin-D2i}
\Li{\Delta u_{i,n}}{B_{R/3}} 
 \le C_1 E(u_0)^{\frac{1}{2}} + C_2 \Ln{V_{i,n}}{\Omega} 
      + C_3 \mu_{i,n}(D_{R/2}) + \Li{\Delta f}{B_{R/2}},  
\end{align}
where $D_{R/2}:= B_R \setminus B_{R/2}$. 
\end{lem}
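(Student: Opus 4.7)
The plan is to derive pointwise upper and lower bounds for $v_{i,n}$ on $B_{R/3}$. By Lemma \ref{C-F-lem-1}(a), $\Delta u_{i,n} = v_{i,n} - \Delta^{-1} V_{i,n}$ a.e. in $\Omega$. Since $N \le 3$, the Sobolev embedding $H^2(\Omega) \hookrightarrow L^\infty(\Omega)$ combined with \eqref{elliptic} yields $\Li{\Delta^{-1} V_{i,n}}{\Omega} \le C \Ln{V_{i,n}}{\Omega}$, so pointwise control of $v_{i,n}$ translates directly into the claimed estimate for $\Delta u_{i,n}$, absorbing the $C_2 \Ln{V_{i,n}}{\Omega}$ term in \eqref{uin-D2i}.

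For the upper bound I would exploit that $v_{i,n}$ is subharmonic ($\Delta v_{i,n} = \mu_{i,n} \ge 0$ distributionally, equivalently the sub-mean-value property built into Lemma \ref{C-F-lem-1}(c)). Applied on a ball $B_\rho(x^0) \subset B_{R/2}$ with $\rho$ of order $R$, it gives
\begin{align*}
v_{i,n}(x^0) \le \dfrac{1}{|B_\rho(x^0)|} \int_{B_\rho(x^0)} v_{i,n}(y) \, dy \le C R^{-N/2} \Ln{v_{i,n}}{\Omega}.
\end{align*}
Combining the uniform bound $\Ln{\Delta u_{i,n}}{\Omega} \le \sqrt{2 E(u_0)}$ from Theorem \ref{est-mini} with the $L^2$-control of $\Delta^{-1} V_{i,n}$ via \eqref{elliptic} then yields the upper half of \eqref{uin-D2i}.

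The lower bound is the main obstacle, and here I would combine the representation formula from the proof of Lemma \ref{C-F-lem-1} with the pointwise inequality on ${\rm supp}\, \mu_{i,n}$ provided by Lemma \ref{C-F-lem-2}. The idea is to decompose $v_{i,n}$ on $B_R$ as $v_{i,n} = h + W_R$, where $h$ is harmonic in $B_R$ and $W_R(x) = -\int_{B_R} G_R(x,y) \, d\mu_{i,n}(y) \le 0$ is the Newton potential of the restriction of $\mu_{i,n}$ to $B_R$ against the Dirichlet Green's function $G_R$ of $B_R$. The harmonic part $h$ is controlled pointwise on $B_{R/3}$ by $\Ln{v_{i,n}}{\Omega}$ via the mean value property, producing only contributions of the form $E(u_0)^{1/2}$ and $\Ln{V_{i,n}}{\Omega}$. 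For $W_R$, I would further split $\mu_{i,n} = \mu_{i,n}|_{B_{R/2}} + \mu_{i,n}|_{D_{R/2}}$: for $x \in B_{R/3}$ and $y \in D_{R/2}$ one has $|x-y| \ge R/6$, so $G_R(x,y)$ is uniformly bounded and the contribution of $\mu_{i,n}|_{D_{R/2}}$ to $|W_R(x)|$ is at most $C_3 \mu_{i,n}(D_{R/2})$.

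The delicate step is the piece coming from $\mu_{i,n}|_{B_{R/2}}$, which interacts with the singular part of the Green kernel; here Lemma \ref{C-F-lem-2} is indispensable. Since on every connected component of the open set $B_{R/2} \setminus {\rm supp}\, \mu_{i,n}$ the function $v_{i,n}$ is harmonic, the minimum principle reduces the infimum of $v_{i,n}$ on $B_{R/3}$ either to its infimum on ${\rm supp}\, \mu_{i,n} \cap \overline{B_{R/2}}$, where Lemma \ref{C-F-lem-2} gives $v_{i,n} \ge -\Li{\Delta f}{B_{R/2}} - C\Ln{V_{i,n}}{\Omega}$, or to its infimum on $\partial B_{R/2}$, the latter being absorbed into the harmonic part $h$ and the $\mu_{i,n}(D_{R/2})$ contribution via the decomposition $v_{i,n} = h + W_R$. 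The main technical task is to make this potential-theoretic splitting precise so that the four contributions close up exactly as in \eqref{uin-D2i}, with no residual singular terms coming from the near-diagonal part of $W_R$ associated with $\mu_{i,n}|_{B_{R/2}}$.
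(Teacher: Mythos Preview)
Your overall strategy and the upper bound via the sub-mean-value property are correct, and the decomposition $v_{i,n}=h+W_R$ is essentially what the paper arrives at (the paper's $\tilde G_R(x):=\int_{B_{R/2}}G_R(x-y)\,d\mu_{i,n}(y)$ is your $-W_R^1$). The gap is in how you close the lower bound.

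Your minimum-principle step is circular. Applying the minimum principle for harmonic functions to $v_{i,n}$ on the components of $B_{R/2}\setminus{\rm supp}\,\mu_{i,n}$ pushes the infimum either to ${\rm supp}\,\mu_{i,n}\cap\overline{B_{R/2}}$, where Lemma~\ref{C-F-lem-2} applies, or to $\partial B_{R/2}$. In the second case you propose to ``absorb the contribution into $h$ and $\mu_{i,n}(D_{R/2})$ via $v_{i,n}=h+W_R$.'' This does not work: on $\partial B_{R/2}$ the potential $W_R^1(x)=-\int_{B_{R/2}}G_R(x,y)\,d\mu_{i,n}(y)$ is still singular (points $y\in B_{R/2}$ can be arbitrarily close to $x\in\partial B_{R/2}$), and the same is true of $W_R^2$ since $\partial B_{R/2}\subset\overline{D_{R/2}}$. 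So neither piece of $W_R$ is pointwise controlled on $\partial B_{R/2}$, and you have no independent lower bound for $v_{i,n}$ on that sphere to feed back into the argument. (A secondary issue: $v_{i,n}$ is only upper semicontinuous by Lemma~\ref{C-F-lem-1}(b), so a minimum principle for $v_{i,n}$ itself would also require justification.)

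The paper resolves this by applying the maximum principle not to $v_{i,n}$ but directly to $\tilde G_R$. From the representation \eqref{v-tG-1} together with Lemma~\ref{C-F-lem-2} one obtains $\tilde G_R\le M$ on ${\rm supp}\,\mu_{i,n}\cap B_{R/2}$, where $M$ collects the $E(u_0)^{1/2}$, $\Ln{V_{i,n}}{\Omega}$ and $\Li{\Delta f}{B_{R/2}}$ contributions. The key point is that $\tilde G_R$ is the Newton potential of the compactly supported measure $\mu_{i,n}\lfloor B_{R/2}$, hence superharmonic on all of $\R^N$ and harmonic outside ${\rm supp}\,\mu_{i,n}\cap B_{R/2}$. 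The \emph{domination (maximum) principle for potentials} (Landkof \cite{L}, Theorems~1.5--1.6) then yields $\tilde G_R\le M$ on all of $\R^N$, with no boundary sphere entering the argument at all. Plugging this back into \eqref{v-tG-1} gives the lower bound for $v_{i,n}$ on $B_{R/3}$. This global potential-theoretic principle is exactly the missing ingredient that disposes of the ``near-diagonal'' singularity of $W_R^1$ without ever needing pointwise control of $v_{i,n}$ on $\partial B_{R/2}$.
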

\begin{proof}
Set 
\begin{align} \label{def-U}
U_{i,n}:= u_{i,n} + (\Delta^2)^{-1} V_{i,n},  
\end{align}
where $(\Delta^2)^{-1} V_{i,n}$ denotes a unique solution of 
\begin{align*}
\begin{cases}
-\Delta w= \Delta^{-1} V_{i,n} \quad & \text{in} \quad \Omega, \\
w=0 \quad & \text{on} \quad \pd \Omega. 
\end{cases}
\end{align*}
Let fix $x^0 \in \Omega$ arbitrarily and denote by $B_\rho$ the ball with center $x^0$ and radius $\rho$. 
Choose $R>0$ such that $\con{B}_R \subset \Omega$ and $\vz \in C^\infty_0(B_R)$, $\vz =1$ in $B_{2R/3}$, 
$0 \le \vz \le 1$ elsewhere. 
For any $x \in B_{2R/3}$, we have 
\begin{align*}
\Delta (J_\ve U_{i,n})(x)= \Delta (J_\ve U_{i,n})(x) \vz(x) 
 = - \int_{B_R} G_R(x-y) \Delta (\Delta(J_\ve U_{i,n}) \vz)(y) \, dy, 
\end{align*}
where $G_R$ is Green's function defined in \eqref{def-G}. 
Expanding the right-hand side, we obtain 
\begin{align} \label{rep-DJu}
\Delta(J_\ve U_{i,n})(x) 
& = -\int_{B_{R/2}} \!\!\!\!\!\! G_R(x-y) \Delta^2(J_\ve U_{i,n})(y) \, dy \\
& \qquad -\int_{D_{R/2}} \!\!\!\!\!\!\! G_R(x-y) \Delta^2 (J_\ve U_{i,n})(y) \vz(y)\, dy + \va_\ve(x), \notag   
\end{align}
where $D_{R/2}:= B_R \setminus B_{R/2}$ and 
\begin{align*}
\va_\ve(x) &:= - 2\int_{D_{R/2}} \!\!\!\!\!\!\! G_R(x-y) \nabla (\Delta(J_\ve U_{i,n}))(y) \cdot \nabla \vz(y)\, dy \\
      & \qquad - \int_{D_{R/2}} \!\!\!\!\!\!\! G_R(x-y) \Delta(J_\ve U_{i,n})(y) \Delta \vz(y)\, dy
                   := \va_{\ve,1}(x) + \va_{\ve,2}(x). 
\end{align*}

Noticing that ${\rm supp}\, \nabla \vz$ is contained in $D_{R/3}:= B_R \setminus B_{2R/3}$, we get 
\begin{align*}
\va_{\ve,1}(x) = -\int_{D_{R/3}} \!\!\! \Delta(J_\ve U_{i,n})(y) \nabla \cdot (G_R(x-y) \nabla \vz(y))\, dy. 
\end{align*}
Since the fact that $u_{i,n} \in H^{2}_{0}(\Omega)$ implies 
\begin{align*} 
\int_\Omega \av{\Delta (J_\ve U_{i,n})(y)}^2 dy \le \Ln{\Delta U_{i,n}}{\Omega}^2, 
\end{align*}
the terms $\va_{\ve,1}(x)$ and $\va_{\ve,2}(x)$ are estimated for any $x \in B_{2R/3}$ as follows: 
\begin{align*}
\av{\va_{\ve,1}(x)} & \le C \Ln{\Delta U_{i,n}}{\Omega} ( \Ln{\nabla \vz}{D_{R/3}} + \Ln{\Delta \vz}{D_{R/3}}); \\
\av{\va_{\ve,2}(x)} &\le C \Ln{\Delta U_{i,n}}{\Omega} \Ln{\Delta \vz}{D_{R/3}}.  
\end{align*}
Thus we deduce that 
\begin{align} \label{va-ve-bdd}
\av{\va_\ve(x)} \le C \Ln{\Delta U_{i,n}}{\Omega} \quad \text{for all} \quad x \in B_{2R/3}, 
\end{align}
where the constant $C$ is independent of $\ve$, $i$, and $n$. 

Along the same line as in \eqref{change}, the first term in the right-hand side of \eqref{rep-DJu} is reduced to 
\begin{align} \label{switch}
\int_{B_{R/2}} G_R(x-y) \Delta^2 (J_\ve U_{i,n})(y) \, dy 
 = \int_{B_{R/2}} (J_\ve G_R)(x-y) \Delta^2 U_{i,n}(y)\, dy + \vb_\ve(x),  
\end{align}
where $\vb_\ve(x) \to 0$ as $\ve \downarrow 0$ if $x \in B_{R/2}$. 

Consider the integral 
\begin{align*}
\tilde{G}_R(x):= \int_{B_{R/2}} \!\!\!\!\!\! G_R(x-y) d \mu_{i,n}(y). 
\end{align*}
The integral is well defined in the sense of improper integrals, that is, as 
\begin{align*}
\lim_{\vd \to 0} \int_{\{ y \in B_{R/2} :\  |x-y|>\vd\} } \!\!\!\!\!\!\!\!\!\!\!\!\!\!\!\!\!\!\!\!\!\!
G_R(x-y) d\mu_{i,n}(y) \quad \text{for a.e.} \quad x. 
\end{align*}
Indeed, this follows from Fubini's theorem since for any $k < +\infty$ it holds that  
\begin{align*}
\int_{B_{R/2}} \! \int_{|x| < k} G_R(x-y) \, dx d\mu_{i,n}(y) 
 \le C \int_{B_{R/2}} d \mu_{i,n}(y) < +\infty. 
\end{align*}
Moreover one can verify that $\tilde{G}_R$ is a superharmonic function (e.g., see \cite{L}). 

Since $G_R(z)$ is harmonic if $|z|>\ve$, one can verify that $(J_\ve G_R)(z)= G_R(z)$ holds for $|z|>\ve$. 
On the other hand, from 
\begin{align*}
(J_\ve G_R)(z) 
 = \int_{\av{y-z}<\ve} j_\ve(y-z) G_R(y) \, dy 
 = \int_{\av{\vz}<1} j_0(\vz) G_R(z+ \ve \vz) \, d\vz 
 \le C, 
\end{align*}
we see that there exists an $\ve>0$ small enough such that $(J_\ve G_R)(z) \le G_R(z)$ for $|z| < \ve$. 
Therefore Lubesgue's convergence theorem gives us that  
\begin{align} \label{Gve-tG}
\lim_{\ve \downarrow 0} \int_{B_{R/2}} (J_\ve G_R)(x-y) d \mu_{i,n}(y) 
 = \tilde{G}_R(x) \quad \text{for a.e.} \quad x \in B_{R/2}. 
\end{align}
Analogously to \eqref{switch} we have, for $x \in B_{R/2}$, 
\begin{align*}
\int_{B_R \setminus B_{R/2}} \!\!\!\!\!\!\!\!\!\!\!\!\!\! G_R(x-y) \Delta^2(J_\ve U_{i,n})(y) \vz(y) \, dy 
 = \int_{B_R \setminus B_{R/2}} \!\!\!\!\!\!\!\!\!\!\!\!\!\! J_\ve (\vz(y) G_R(x-y)) \Delta^2 U_{i,n}(y) \, dy 
    + \tilde{\vb}_\ve(x),  
\end{align*} 
where $\tilde{\vb}_\ve(x) \to 0$ as $\ve \downarrow 0$. 
Thus we deduce from Lebesgue's convergence theorem that for $x \in B_{R/2}$, as $\ve \downarrow 0$, 
\begin{align} \label{Guve-to-Gu}
\int_{B_R \setminus B_{R/2}} \!\!\!\!\!\!\!\!\!\!\!\!\!\! G_R(x-y) \Delta^2(J_\ve U_{i,n})(y) \vz(y) \, dy  
 \to \int_{B_R \setminus B_{R/2}} \!\!\!\!\!\!\!\!\!\!\!\!\!\! G_R(x-y) \Delta^2 U_{i,n}(y) \vz(y) \, dy.  
\end{align} 
We can write 
\begin{align*}
\Delta (J_\ve U_{i,n})(x)
 &= \int_{\av{z-x}<\ve} U_{i,n}(z) \Delta j_\ve(x-z) \, dz 
 = \int_{\av{z-x}<\ve} \Delta U_{i,n}(z) j_\ve(x-z) \, dz \\
 &= \int_{\av{z-x}<\ve} v_{i,n}(z) j_\ve(x-z) \, dz 
 = \int^\ve_0 \dfrac{1}{\ve^N} j_0 \left( \dfrac{\rho}{\ve} \right) 
    \int_{\pd B_\rho(x)} v_{i,n}(\rho,\vs) \, dS_\vs d\rho, 
\end{align*}
where $(\rho,\vs)$ is the spherical coordinates about $x$ and $\lm_\ve(\rho)$ is 
a smooth nonnegative function. 
Since it follows from the proof of Lemma \ref{C-F-lem-1} that 
\begin{align*}
\dfrac{1}{\av{\pd B_\rho(x)}} \int_{\pd B_\rho(x)} v_{i,n}(\rho,\vs) \, dS_\vs 
  \downarrow v_{i,n}(x) \quad \text{as} \quad \rho \downarrow0, 
\end{align*}
the mean value theorem yields that 
\begin{align*}
\Delta (J_\ve U_{i,n})(x) 
 &= \dfrac{1}{\av{\pd B_{\rho'}}} \int_{\pd B_{\rho'}} v_{i,n}(\rho',\vs) \, dS_\vs 
       \int^\ve_0 \dfrac{1}{\ve^N} j_0 \left( \dfrac{\rho}{\ve} \right) \omega_N \rho^{N-1} d\rho \\
 &= \dfrac{1}{\av{\pd B_{\rho'}} }\int_{\pd B_{\rho'}} v_{i,n}(\rho',\vs) \, dS_\vs \int_{\av{y}<1} j_0 ( \av{y} ) dy \\
 &= \dfrac{1}{\av{\pd B_{\rho'}}} \int_{\pd B_{\rho'}} v_{i,n}(\rho',\vs) \, dS_\vs 
   \downarrow v_{i,n}(x) \quad \text{as} \quad \ve \downarrow 0, 
\end{align*}
where $\omega_N \rho^{N-1}$ denotes the area of surface $\pd B_\rho$ and $\rho' \in (0, \ve)$.  
Combining this with \eqref{switch}, \eqref{Gve-tG}, and \eqref{Guve-to-Gu}, letting $\ve \downarrow 0$ in \eqref{rep-DJu}, 
we obtain that for $x \in B_{R/2}$ there holds 
\begin{align} \label{v-tG-1}
v_{i,n}(x) = -\tilde{G}_R(x)- \int_{D_{R/2}} \!\!\!\!\!\!\! \vz(y) G_R(x-y) \Delta^2 U_{i,n}(y) \, dy + \vd(x). 
\end{align} 
Remark that \eqref{va-ve-bdd} implies  
\begin{align} \label{vd-bdd}
\av{\vd(x)} \le C_1 \Ln{\Delta U_{i,n}}{\Omega} \quad \text{for all} \quad x \in B_{2R/3}, 
\end{align}
where the constant $C_1$ is independent of $i$ and $n$. 
Recalling that $\tilde{G}_R$ is superharmonic, we shall apply a maximal principle for 
superharmonic functions to $\tilde{G}_R$. 
It follows from Lemma \ref{C-F-lem-2} that 
\begin{align*}
v_{i,n}(x) \ge \Delta^{-1} V_{i,n}(x) + \Delta f(x) \quad \text{on} \quad {\rm supp}\,\mu_{i,n} \lfloor B_{R/2}. 
\end{align*}
Since the integral on the right-hand side of \eqref{v-tG-1} is non-negative, we see that 
\begin{align} \label{pre-tG-bdd}
\tilde{G}_R(x) 
 &\le -v_{i,n}(x) + \vd(x) 
 \le - \Delta^{-1} V_{i,n}(x) - \Delta f(x) + \vd(x) \\
 &\le \Cn{\Delta^{-1} V_{i,n}}{}{B_{R/2}} + \Li{\Delta f}{B_{R/2}} + \Li{\vd}{B_{R/2}}
\quad \text{on} \quad {\rm supp}\, \mu_{i,n} \lfloor B_{R/2}.  \notag
\end{align}
Furthermore Proposition \ref{compactness} and \eqref{elliptic} assert that 
\begin{align} \label{Vin-bdd}
\Cn{\Delta^{-1} V_{i,n}}{}{B_{R/2}} 
 \le \| \Delta^{-1} V_{i,n} \|_{C^{k,\gm}(\Omega)} 
 \le C \Hn{\Delta^{-1} V_{i,n}}{2} 
 \le C_2 \Ln{V_{i,n}}{\Omega}, 
\end{align}
where $k=1$ and $0 < \gm < 1/2$ if $N=1$, $k=0$ and $0 < \gm < 2-N/2$ if $N=2$, $3$, 
and the constant $C_2$ is independent of $i$ and $n$. 
Thus, combining \eqref{pre-tG-bdd} with \eqref{vd-bdd} and \eqref{Vin-bdd}, we observe that 
\begin{align*}
\tilde{G}_R(x) \le C_1 \Ln{\Delta U_{i,n}}{(\Omega} + C_2 \Ln{V_{i,n}}{\Omega} + \Li{\Delta f}{B_{R/2}} 
 \quad \text{on} \quad {\rm supp}\, \mu_{i,n} \lfloor B_{R/2}, 
\end{align*}
and then, Theorems 1.5 and 1.6 in \cite{L} give us that 
\begin{align*}
\tilde{G}_R(x) \le C_1 \Ln{\Delta U_{i,n}}{(\Omega} 
  + C_2 \Ln{V_{i,n}}{\Omega} + \Li{\Delta f}{B_{R/2}} \quad \text{in} \quad \R^N.
\end{align*}
Observing that the integral in \eqref{v-tG-1} is estimated as 
\begin{align*}
\int_{D_{R/2}} \!\!\!\!\!\!\! \vz(y) G_R(x-y) \Delta^2 U_{i,n}(y) \, dy 
 \le C_3 \mu_{i,n}(D_{R/2})  
\quad \text{in} \quad B_{R/3}, 
\end{align*}
we deduce that, for any $x \in B_{R/3}$, 
\begin{align*}
\av{v_{i,n}(x)} \le 2 C_1 \Ln{\Delta U_{i,n}}{(\Omega} + C_2 \Ln{V_{i,n}}{\Omega} 
                                + C_3 \mu_{i,n}(D_{R/2}) + \Li{\Delta f}{B_{R/2}},
\end{align*}
so that 
\begin{align} \label{pre-W2i-1}
\av{\Delta u_{i,n}(x)} \le 2 C_1 \Ln{\Delta U_{i,n}}{(\Omega} + 2 C_2 \Ln{V_{i,n}}{\Omega} 
                                  + C_3 \mu_{i,n}(D_{R/2}) + \Li{\Delta f}{B_{R/2}}. 
\end{align}
Since \eqref{u-in-uni-bd} yields that 
\begin{align*}
\Ln{\Delta U_{i,n}}{\Omega} 
 \le \Ln{\Delta u_{i,n}}{\Omega} + \Ln{\Delta^{-1} V_{i,n}}{\Omega} 
 \le \sqrt{2 E(u_0)} + C \Ln{V_{i,n}}{\Omega}, 
\end{align*}
we obtain  
\begin{align*} 
\Li{\Delta u_{i,n}}{B_{R/3}}  
 \le C_1' \sqrt{2 E(u_0)} + C_2' \Ln{V_{i,n}}{\Omega}
       + C_3 \mu_{i,n}(D_{R/2}) + \Li{\Delta f}{B_{R/2}}. 
\end{align*}
This completes the proof. 
\end{proof}

\begin{rem}\label{rem}\rm
We need to impose the restriction on the dimension $N\le 3$ 
in Lemma \ref{C-F-lem-3} in order to obtain the inequality 
\begin{align*}
\| \Delta^{-1} V_{i,n} \|_{L^{\infty}(B_{R/2})} \le C \| V_{i,n} \|_{L^{2}(\Omega)}
\end{align*}
in \eqref{Vin-bdd}. Such an estimate will allow us to prove  
a uniform $W^{2,\infty}$ bound on $u_{i,n}$ with respect to $n$. 
\end{rem}

\begin{thm} \label{u-W2infinity}
Let $N \le 3$. It holds that 
\begin{align}
u_{i,n} \in W^{2, \infty}(\Omega) 
\end{align}
for each $n \in \N$ and $i=1, \cdots, n$. Moreover, for any $R>0$ with $\con{B}_R \subset \Omega$, 
there exist positive constants $C_1$ and $C_2$ being independent of $n$ such that 
\begin{align} \label{est-W2infinity}
\tau_n \sum^n_{i=1} \Li{D^2 u_{i,n}}{\Omega}^2 
 \le C_1 + C_2 \Li{\Delta f}{\Omega}^2. 
\end{align}
\end{thm}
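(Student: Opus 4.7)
The plan is to combine Lemma \ref{C-F-lem-3} with a boundary regularity argument to obtain a pointwise bound on $\Li{D^2 u_{i,n}}{\Omega}$, and then square the resulting inequality and sum over $i = 1, \ldots, n$.

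First, I would establish that the obstacle is strictly inactive in a uniform neighborhood of $\partial \Omega$. Since $f < 0$ on $\partial \Omega$ by \eqref{f-cond} and $u_{i,n}|_{\partial \Omega} = 0$, the continuous embedding $H^2_0(\Omega) \hookrightarrow C^0(\overline{\Omega})$ from Proposition \ref{compactness} (valid for $N \le 3$), together with the uniform bound \eqref{u-in-uni-bd}, would yield $\delta > 0$ independent of $i$ and $n$ such that $u_{i,n} > f$ on $\Omega_\delta$. Lemma \ref{supp-mu-i-n-1} would then force $\mu_{i,n} \equiv 0$ on $\Omega_\delta$, so that $u_{i,n}$ solves the linear biharmonic equation $\Delta^2 u_{i,n} = -V_{i,n}$ there, subject to Dirichlet conditions on $\partial \Omega$. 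Standard boundary $H^4$-regularity for the biharmonic operator on the $C^2$ domain, combined with the Sobolev embedding $H^4 \hookrightarrow W^{2,\infty}$ valid for $N \le 3$, then yields
\begin{align*}
\Li{D^2 u_{i,n}}{\Omega_{\delta/2}} \le C \bigl( E(u_0)^{1/2} + \Ln{V_{i,n}}{\Omega} \bigr).
\end{align*}

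For the compact set $\Omega \setminus \Omega_{\delta/2}$, I would cover it by finitely many balls $B_{R/3}(x_k)$ with $\overline{B_R(x_k)} \subset \Omega$ and apply Lemma \ref{C-F-lem-3} on each to obtain
\begin{align*}
\Li{\Delta u_{i,n}}{\Omega \setminus \Omega_{\delta/2}} \le C_1 E(u_0)^{1/2} + C_2 \Ln{V_{i,n}}{\Omega} + C_3 \mu_{i,n}(\Omega) + \Li{\Delta f}{\Omega}.
\end{align*}
To upgrade this Laplacian bound to a full Hessian bound, I would exploit the obstacle-problem structure: on the contact set $\{u_{i,n} = f\}$ one has $D^2 u_{i,n} = D^2 f$ almost everywhere, so $\Li{D^2 u_{i,n}}{\{u_{i,n}=f\}} \le \Li{D^2 f}{\Omega}$; on the noncontact set, $u_{i,n}$ satisfies $\Delta^2 u_{i,n} = -V_{i,n}$, and interior biharmonic regularity combined with the boundary step above would provide the missing pointwise control.

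Squaring the resulting pointwise inequality, multiplying by $\tau_n$ and summing over $i = 1, \ldots, n$, the contribution from $\Ln{V_{i,n}}{\Omega}^2$ is bounded by $\tau_n \sum_i \Ln{V_{i,n}}{\Omega}^2 \le 2 E(u_0)$ from Theorem \ref{est-mini}, and the contribution from $\mu_{i,n}(\Omega)^2$ is bounded by $\tau_n \sum_i \mu_{i,n}(\Omega)^2 < C$ from Theorem \ref{bilaplace-measure}; the remaining terms are either $O(T)$ or are absorbed into $T \Li{\Delta f}{\Omega}^2$. This delivers \eqref{est-W2infinity}. I expect the hardest step to be the upgrade from the Laplacian bound of Lemma \ref{C-F-lem-3} to a full Hessian bound, since the Calder\'on--Zygmund estimate degenerates at the $L^\infty$ endpoint; the identification $D^2 u_{i,n} = D^2 f$ on the contact set and the interior biharmonic equation on the noncontact set appear to be the essential inputs for bridging this gap.
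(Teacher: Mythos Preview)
Your boundary step and the final summation are essentially the same as in the paper, and they are fine. The gap is precisely where you flag it: the upgrade from the interior $L^\infty$ bound on $\Delta u_{i,n}$ (Lemma~\ref{C-F-lem-3}) to a bound on the full Hessian $D^2 u_{i,n}$. Your proposed bridge---``$D^2 u_{i,n}=D^2 f$ on the contact set, interior biharmonic regularity on the noncontact set''---does not close. On the open set $\{u_{i,n}>f\}$ the equation $\Delta^2 u_{i,n}=-V_{i,n}$ gives interior $C^\infty$ smoothness, but the $W^{2,\infty}$ estimate on a ball $B_r(x)\subset\{u_{i,n}>f\}$ depends on $r$ and blows up as $x$ approaches the free boundary. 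Thus the two pieces do not glue to a uniform bound in a neighbourhood of $\partial\{u_{i,n}=f\}$, and since the whole point of the theorem is uniformity in $i,n$, this is fatal as written.

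The paper supplies the missing idea by a potential-theoretic representation rather than a decomposition into contact/noncontact sets. One writes, for $x$ in an interior ball,
\[
J_\ve U_{i,n}(x)=\int W(x-y)\,\Delta^2\bigl(\zeta J_\ve U_{i,n}\bigr)(y)\,dy,
\]
where $W$ is the fundamental solution of $\Delta^2$, and applies the operator $\partial_{x_j}^2-\tfrac12\Delta$ to both sides. The crucial pointwise inequality $(\partial_{x_j}^2-\tfrac12\Delta)W\ge -c$ together with the sign $\Delta^2 U_{i,n}=\mu_{i,n}\ge 0$ yields a one-sided bound on $\partial_{x_j}^2 J_\ve U_{i,n}$ in terms of $\|\Delta U_{i,n}\|_{L^\infty}$ and $\mu_{i,n}(B_R)$; the opposite bound follows from $\partial_{x_j}^2=\Delta-\sum_{k\neq j}\partial_{x_k}^2$. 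Letting $\ve\downarrow 0$ and invoking Lemma~\ref{C-F-lem-3} then gives \eqref{D2-bdd}. This is exactly the $L^\infty$-endpoint substitute for Calder\'on--Zygmund that you were looking for, and it works uniformly across the free boundary because it never distinguishes contact from noncontact points.
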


\begin{proof}
Thanks to Theorem \ref{est-mini}, we see that $u_{i,n}$ is uniformly bounded in $H^2_0(\Omega)$. 
Then, Proposition \ref{compactness} asserts that $u_{i,n}$ is also uniformly bounded in $C^{1, \gm}(\Omega)$ with $0 < \gm < 1/2$ 
if $N=1$, and in $C^{0,\gm}(\Omega)$ with $\gm \in (0, 2-N/2)$ if $N=2$, $3$. 
Since $u_{i,n}=0$ on $\pd \Omega$, there exists a neighborhood $\Omega_\vd$ of 
$\pd \Omega$ such that $u_{i,n} > f$ in $\Omega_\vd$. 
By the standard elliptic regularity theory, we observe that $\Delta u_{i,n} \in H^2(\Omega_\vd)$ with 
\begin{align} \label{ing-3-4-1}
\| \Delta u_{i,n} \|_{H^2(\Omega_\vd)} \le C (\Ln{V_{i,n}}{\Omega_\vd} + \Ln{\Delta u_{i,n}}{\Omega_\vd}), 
\end{align}
where the positive constant $C$ depends only on $\Omega_\vd$. Combining \eqref{ing-3-4-1} with 
the interpolation inequality 
\begin{align*}
\Li{\Delta u_{i,n}}{\Omega_\vd} \le K \| \Delta u_{i,n} \|_{H^2(\Omega_\vd)}^{N/4} \Ln{\Delta u_{i,n}}{\Omega_\vd}^{1-N/4}, 
\end{align*}
where $K$ is a positive constant depending only on $N$, we deduce that  
\begin{align} \label{ing-3-4-2}
\Li{\Delta u_{i,n}}{\Omega_\vd} \le C' (\Ln{V_{i,n}}{\Omega} + \Ln{\Delta u_{i,n}}{\Omega_\vd}).  
\end{align}

In the sequel, we let $N=2$, $3$. 
Let fix $x^0 \in \Omega \setminus \Omega_\vd$ arbitrarily and $B_\rho$ denote the ball with center $x^0$ and radius $\rho$. 
Choose $R>0$ such that $\con{B}_R \subset \Omega$ and $\vz \in C^\infty_0(B_R)$, $\vz=1$ in $B_{2R/3}$, 
$0 \le \vz \le 1$ elsewhere. 
For any $x \in B_{R/2}$, we can write 
\begin{align*}
(J_\ve U_{i,n})(x) = \int_{B_R} W(x-y) \Delta^2 (\vz J_\ve U_{i,n})(y) \, dy, 
\end{align*}
where $U_{i,n}$ is the function defined by \eqref{def-U} and $W$ is the fundamental solution of $\Delta^2$: 
\begin{align*}
W(x) = 
\begin{cases}
\gm_N |x|^2 (\log{|x|} -1) \quad & \text{if} \quad N=2, \\
-\gm_N \av{x} \quad & \text{if} \quad N=3, \\
\end{cases}
\end{align*}
where $\gm_N$ are constants chosen such that 
\begin{align*}
\Delta^2 W = \vd, 
\end{align*}
where $\vd$ denotes the Dirac measure (e.g., see \cite{F1}). 
Expanding $\Delta^2(\vz J_\ve U_{i,n})$ and performing integrations by parts, we obtain 
\begin{align} \label{u-W-1}
& (J_\ve U_{i,n})(x) \\
& \quad = \int_{B_{2R/3}}\!\!\!\!\!\!\!\! W(x-y) \Delta^2 (\vz J_\ve U_{i,n})(y) \, dy 
       + \int_{D_{R/3}} \!\!\! W(x-y) \Delta^2 (\vz J_\ve U_{i,n})(y) \, dy \notag \\
& \quad = \int_{B_{2R/3}} \!\!\!\!\!\!\!\! W(x-y) \vz(y) \Delta^2 (J_\ve U_{i,n})(y) \, dy \notag \\
& \qquad + \int_{D_{R/3}} \!\!\! W(x-y) \!\! \Bigm[ \!\! \Delta^2 \vz (J_\ve U_{i,n}) + 4 \nabla (\Delta \vz) \cdot \nabla (J_\ve U_{i,n}) 
          + 6 \Delta \vz \Delta(J_\ve U_{i,n}) \notag \\
& \qquad \qquad \qquad \qquad \qquad \quad
      +4 \nabla \vz \cdot \nabla \Delta (J_\ve U_{i,n}) + \vz \Delta^2(J_\ve U_{i,n}) \Bigm] (y) \, dy \notag \\
& \quad = \int_{B_R} \!\!\! W(x-y) \vz(y) \Delta^2 (J_\ve U_{i,n})(y) \, dy + \va_\ve(x), \notag
\end{align}
where $D_{R/3} := R_R \setminus B_{2R/3}$ and 
\begin{align*}
\va_\ve(x)&= \int_{D_{R/3}} W(x-y) \Bigm[ \Delta^2 \vz (J_\ve U_{i,n}) + 4 \nabla (\Delta \vz) \cdot \nabla (J_\ve U_{i,n}) 
          + 2 \Delta \vz \Delta(J_\ve U_{i,n}) \Bigm](y) \, dy \\
& \qquad \quad -4 \int_{D_{R/3}} \nabla W(x-y) \cdot \nabla \vz(y) \Delta(J_\ve U_{i,n})(y) \, dy. 
\end{align*}
Since it follows from a direct calculation that 
\begin{align*}
\left( \p{}{x_j}{2} - \dfrac{1}{2} \Delta \right)W(x) 
 = 
\begin{cases}
\gm_N \left( 2 x_j^2 |x|^{-2} -1 \right) \quad & \text{if} \quad N=2, \\
\gm_N x_j^2 |x|^{-3} \quad & \text{if} \quad N=3, 
\end{cases}
\end{align*}
one can verify that 
\begin{align} \label{W-bdd-b}
\left( \p{}{x_j}{2} - \dfrac{1}{2} \Delta \right)W \ge -c, 
\end{align}
where $c$ is a positive constant. 
Applying $\pd^2/\pd x_j^2 - \Delta/2$ to the both sides of \eqref{u-W-1} and using \eqref{W-bdd-b} and 
the fact that $\vz \Delta^2 (J_\ve U_{i,n}) \ge 0$, we obtain, if $x \in B_{R/2}$, 
\begin{align*}
\left( \p{}{x_j}{2} -\dfrac{1}{2} \Delta \right) J_\ve U_{i,n}(x) 
 \ge -c \int_{B_R} \vz(y) \Delta^2 (J_\ve U_{i,n})(y) \, dy + \left( \p{}{x_j}{2} -\dfrac{1}{2} \Delta \right) \va_\ve(x). 
\end{align*}
Since the integral in the right-hand side can be written as 
\begin{align*}
\int_{B_R} (J_\ve \vz)(y) \Delta^2 U_{i,n}(y) \, dy + \vb_\ve, 
\end{align*}
where $\vb_\ve \to 0$ as $\ve \downarrow 0$, we conclude that 
\begin{align} \label{2D-Lbd-ve}
\p{J_\ve U_{i,n}}{x_j}{2}(x) 
 & \ge -\dfrac{1}{2} \Li{\Delta J_\ve U_{i,n}}{B_{R/3}} -c \int_{B_R} (J_\ve \vz)(y) \Delta^2 U_{i,n}(y) \, dy \\
 & \qquad -c \vb_\ve + \left( \p{}{x_j}{2} -\dfrac{1}{2} \Delta \right) \va_\ve(x)
 \quad \text{in} \quad B_{R/3}. \notag
\end{align}
On the other hand, it also holds that 
\begin{align} \label{2D-Ubd-ve}
\p{J_\ve U_{i,n}}{x_j}{2} 
& = \Delta(J_\ve U_{i,n}) - \sum_{k \neq j} \p{J_\ve U_{i,n}}{x_k}{2} \\
& \le  \dfrac{N+1}{2} \Li{\Delta(J_\ve U_{i,n})}{B_{R/3}} + c(N-1) \int_{B_R} (J_\ve \vz)(y) \Delta^2 U_{i,n}(y) \, dy \notag \\
& \qquad + c (N-1) \vb_\ve - (N-1) \left( \p{}{x_j}{2} -\dfrac{1}{2} \Delta \right) \va_\ve(x)
   \quad \text{in} \quad B_{R/3}. \notag
\end{align}
Lemma \ref{C-F-lem-3} implies that 
\begin{align} \label{part-1}
\Li{\Delta(J_\ve U_{i,n})}{B_{R/3}} 
 &\le \Li{\Delta U_{i,n}}{B_{R/3}} \\
 \le C_1 E(u_0)^{\frac{1}{2}} & + (C_2+1) \Ln{V_{i,n}}{\Omega} 
      + C_3 \mu_{i,n}(D_{R/2}) + \Li{\Delta f}{B_{R/2}}. \notag 
\end{align}
Letting $\ve \downarrow 0$, we find 
\begin{align} \label{part-2}
\int_{B_R} (J_\ve \vz)(y) \Delta^2 U_{i,n}(y) \, dy 
 \to \int_{B_R} \vz(y) \Delta^2 U_{i,n}(y) \, dy \le \mu_{i,n}(B_R).  
\end{align}
Furthermore it follows from the Gagliardo-Nirenberg type interpolation inequality that 
\begin{align*}
\Li{\va_\ve}{B_{R/3}} 
 &\le C \{ \Ln{J_\ve U_{i,n}}{\Omega} + \Ln{\nabla (J_\ve U_{i,n})}{\Omega} + \Ln{\Delta (J_\ve U_{i,n})}{\Omega} \} \\
 &\le C \{ \Ln{J_\ve U_{i,n}}{\Omega} + \Ln{\Delta (J_\ve U_{i,n})}{\Omega} \} \\
 &\le C \{ \Ln{U_{i,n}}{\Omega} + \Ln{\Delta U_{i,n} }{\Omega} \}. 
\end{align*}
Observing 
\begin{align*}
\Ln{U_{i,n}}{\Omega} 
 \le \Ln{u_{i,n}}{\Omega} + \Ln{(\Delta^2)^{-1} V_{i,n}}{\Omega} 
 \le \Hnd{u_{i,n}} + C \Ln{V_{i,n}}{\Omega}, 
\end{align*}
we obtain 
\begin{align} \label{part-3}
\Li{\va_\ve}{B_{R/3}} 
 \le C_1' E(u_0)^{\frac{1}{2}} & + C_2' \Ln{V_{i,n}}{\Omega} 
      + C_3' \mu_{i,n}(D_{R/2}) + C_4 \Li{\Delta f}{B_{R/2}}
\end{align}
Recalling $\vb_\ve \to 0$ as $\ve \downarrow 0$ and letting $\ve \downarrow 0$ in \eqref{2D-Lbd-ve} and \eqref{2D-Ubd-ve}, 
we deduce from \eqref{part-1}--\eqref{part-3} that 
\begin{align} \label{pre-D2-bdd}
\Li{\p{u_{i,n}}{x_j}{2}}{B_{R/3}} 
 \le C_5 E(u_0)^{\frac{1}{2}} & + C_6 \Ln{V_{i,n}}{\Omega} 
      + C_7 \mu_{i,n}(B_R) + C_8 \Li{\Delta f}{B_{R/2}}
\end{align}
Since $x_j$ can be in any direction, the inequality \eqref{pre-D2-bdd} implies that 
\begin{align} \label{D2-bdd}
\Li{D^2 u_{i,n}}{B_{R/3}} 
\le C_5' E(u_0)^{\frac{1}{2}} + C_6' \Ln{V_{i,n}}{\Omega} + C_7' \mu_{i,n}(B_R) + C_8' \Li{\Delta f}{B_{R/2}}, 
\end{align} 
where the constants $C_5'$, $C_6'$, $C_7'$, and $C_8'$ are independent of $i$ and $n$. 
Recalling \eqref{ing-3-4-2}, along the same line as above, one can verify that 
\begin{align} \label{ing-3-4-3}
\Li{D^2 u_{i,n}}{\Omega_\vd} \le C (\Ln{V_{i,n}}{\Omega_\vd} + E(u_0)^{\frac{1}{2}}), 
\end{align}
where the constant $C$ depends only on $\Omega_\vd$. 
Since $\Omega \setminus \Omega_\vd$ is compact, combining \eqref{D2-bdd} with \eqref{ing-3-4-3}, 
we obtain the assertion $u_{i,n} \in W^{2,\infty}(\Omega)$ and 
\begin{align} \label{ing-3-4-4}
\Li{D^2 u_{i,n}}{\Omega}
\le C E(u_0)^{\frac{1}{2}} + C \Ln{V_{i,n}}{\Omega} + C \mu_{i,n}(\Omega) + C \Li{\Delta f}{\Omega}
\end{align}
Finally multiplying \eqref{ing-3-4-4} by $\tau_n$ and summing over $i= 1, \cdots, n$, 
we conclude from \eqref{est-u_n-t} and \eqref{me-bd} that 
\begin{align*}
&\tau_n \sum^n_{i=1} \Li{D^2 u_{i,n}}{\Omega}^2  \\
& \quad \le C T E(u_0) + C \int^T_0 \Ln{V_n(t)}{\Omega}^2 \, dt 
      + C \tau_n \sum^n_{i=1} \mu_{i,n}(\Omega)^2 + C T \Li{\Delta f}{\Omega}^2 \\
& \quad \le C T E(u_0) + 2 C E(u_0) + C + C T \Li{\Delta f}{\Omega}^2. 
\end{align*} 
This completes the proof. 
\end{proof}

When we restrict to dimensions $N \le 3$, Proposition \ref{compactness} implies that $u_{i,n}$ is continuous. 
Under such restriction, we define 
\begin{align}
\mC_{i,n}:= \{ x \in \Omega :\  u_{i,n}(x) = f(x) \}, \label{coincide} \\
\mN_{i,n} := \{ x \in \Omega :\  u_{i,n}(x) > f(x) \}. \label{not-coincide}
\end{align}
It is clear that $\mC_{i,n} \cup \mN_{i,n} = \Omega$. 
We can show a relation between the support of $\mu_{i,n}$ and the sets. 

\begin{lem} \label{measure-coincide}
Let $N \le 3$. If $x_0 \in \mN_{i,n}$, then there exists a neighborhood 
of $x_0$ such that $\mu_{i,n}(\mN_{i,n})=0$. Furthermore we have 
\begin{align} \label{supp-mu-in}
{\rm supp}\, \mu_{i,n} \subseteq \mC_{i,n}. 
\end{align}
\end{lem}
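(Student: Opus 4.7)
The plan is to reduce this to Lemma \ref{supp-mu-i-n-1}, which says that $\mu_{i,n}$ vanishes on any open set where $J_\ve(u_{i,n}) - f$ is bounded below by a positive constant. The key additional ingredient is the continuity of $u_{i,n}$, which is available precisely because we restrict to $N \le 3$: since $u_{i,n} \in H^2_0(\Omega)$, Proposition \ref{compactness} gives $u_{i,n} \in C^{1,\gm}(\overline{\Omega})$ for $N=1$ and $u_{i,n} \in C^{0,\gm}(\Omega)$ for $N=2,3$, and by assumption $f \in C^2(\overline{\Omega})$.

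Fix $x_0 \in \mN_{i,n}$, so that $u_{i,n}(x_0) - f(x_0) > 0$. By the continuity of $u_{i,n}-f$, there is an open neighborhood $W \Subset \Omega$ of $x_0$ and a constant $\vd > 0$ with
\begin{align*}
u_{i,n}(x) - f(x) > 2 \vd \quad \text{for all } x \in W.
\end{align*}
Pick a slightly smaller neighborhood $W' \Subset W$. Since $u_{i,n}$ is continuous on the compact set $\overline{W}$, the mollifications $J_{\ve}(u_{i,n})$ converge uniformly to $u_{i,n}$ on $W'$ as $\ve \downarrow 0$. Hence for $\ve > 0$ sufficiently small,
\begin{align*}
J_{\ve}(u_{i,n})(x) - f(x) > \vd \quad \text{for all } x \in W'.
\end{align*}

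By Lemma \ref{supp-mu-i-n-1} applied on $W'$, we conclude that $\mu_{i,n} = 0$ on $W'$, which yields the first assertion (with the given statement interpreted as ``$\mu_{i,n}$ vanishes on a neighborhood of $x_0$''). Since every point of the open set $\mN_{i,n}$ admits such a neighborhood, the complement of $\mathrm{supp}\,\mu_{i,n}$ contains $\mN_{i,n}$, i.e.,
\begin{align*}
\mathrm{supp}\,\mu_{i,n} \subseteq \Omega \setminus \mN_{i,n} = \mC_{i,n}.
\end{align*}

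There is no real obstacle: the lemma is essentially a matter of promoting the pointwise continuity bound $u_{i,n} - f > 0$ to the mollified bound required by Lemma \ref{supp-mu-i-n-1}, and the only subtle point is shrinking $W$ to $W' \Subset W$ so that uniform convergence of the mollifiers is available on a compact set where $f$ remains $C^2$. The dimensional restriction $N \le 3$ is used exactly once, namely to guarantee continuity of $u_{i,n}$ so that $\mN_{i,n}$ is open and the pointwise inequality at $x_0$ propagates to a neighborhood.
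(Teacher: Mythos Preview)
Your proof is correct, but it takes a different route from the paper. The paper does \emph{not} invoke Lemma~\ref{supp-mu-i-n-1} at all; instead it argues directly from the variational inequality \eqref{vari-ineq-1} satisfied by the minimizer $u_{i,n}$. Once continuity of $u_{i,n}$ (from Proposition~\ref{compactness}, $N\le 3$) gives a neighborhood $W$ with $u_{i,n}-f>\vd$ on $W$, the paper tests \eqref{vari-ineq-1} with $\vp=u_{i,n}-\vz$ for any $\vz\in C^\infty_0(W)$ with $0\le \vz\le \vd/2$, obtaining $\int_\Omega \vz\,d\mu_{i,n}\le 0$; since $\mu_{i,n}\ge 0$ this forces $\mu_{i,n}=0$ on $W$.

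Your argument reaches the same conclusion by upgrading the pointwise bound to the mollified hypothesis of Lemma~\ref{supp-mu-i-n-1} via uniform convergence $J_\ve(u_{i,n})\to u_{i,n}$ on a compactly contained $W'\Subset W$. This is perfectly valid and has the virtue of exhibiting Lemma~\ref{measure-coincide} as a corollary of the earlier lemma. The paper's direct variational approach is a bit more economical: it avoids the mollification step and the shrinking $W\to W'$, and it does not rely on Lemma~\ref{supp-mu-i-n-1} (whose own proof is somewhat delicate). Both arguments use the dimensional restriction $N\le 3$ in exactly the same place, namely to guarantee continuity of $u_{i,n}$ so that $\mN_{i,n}$ is open and the inequality at $x_0$ extends to a neighborhood.
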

\begin{proof}
Let $N \le 3$ and fix $x^0 \in \mN_{i,n}$ arbitrarily. 
Since $\mN_{i,n}$ is an open set, there exist a constant $\vd>0$ and a neighborhood $W$ of $x^0$ such that 
\begin{align*}
u_{i,n}(x) - f(x) > \vd \quad \text{for all} \quad x \in W. 
\end{align*}
Notice that $u_{i,n}$ satisfies 
\begin{align} \label{vari-ineq-1}
\int_\Omega \Delta u_{i,n} \Delta(u_{i,n} - \vp) \, dx \le - \int_\Omega V_{i,n} (u_{i,n} - \vp) \, dx 
\end{align}
for any $\vp \in K$, for $u_{i,n}$ is a solution of \eqref{incre-mini}. 
Then for any $\vz \in C^\infty_0(W)$ with $0 \le \vz \le \vd /2$, the function 
\begin{align*}
\vq = u_{i,n} - \vz
\end{align*}
belongs to $K$. Taking this $\vq$ as $\vp$ in \eqref{vari-ineq-1}, we have 
\begin{align*}
\int_\Omega \left[ \Delta u_{i,n} \Delta \vz + V_{i,n} \vz \right] dx \le 0,  
\end{align*}
Since $\mu_{i,n} \ge 0$, this asserts that 
\begin{align*}
\int_\Omega \left[ \Delta u_{i,n} \Delta \vz + V_{i,n} \vz \right] dx =0, 
\end{align*}
i.e.,  $\mu_{i,n}=0$ in $W$. 
\end{proof}

\smallskip


\section{Existence and regularity of solutions to problem \eqref{P}}
We first prove a convergence result which holds in any dimension $N \ge 1$.  
\begin{thm} \label{convergence-1}
Let $u_n$ be the piecewise linear interpolation of $\{ u_{i,n} \}$.  
Then there exists a function $$u \in L^\infty([0,+\infty);H^{2}_{0}(\Omega)) \cap H^1_{loc}(0,+\infty;L^2(\Omega))$$ such that
\begin{align}
u_n \rightharpoonup u \quad\text{in} \quad L^2(0,T;H^2_0(\Omega)) \cap H^1(0,T;L^2(\Omega)) 
\quad \text{as} \quad n \to +\infty\,, 
\end{align}
up to a subsequence, for any $0< T < +\infty$. 
Moreover 
\begin{align*}
\int_0^T\int_\Omega u_t^2\,dx\,dt\le 2E(u_0)\,,
\end{align*}
$u(x,t) \ge f(x)$ for a.e. $x\in\Omega$ and for every $t \in [0,+\infty)$, and for each $\va \in (0, \tfrac{1}{2})$ it holds
\begin{align} \label{holder-conv}
u_n \to u \quad \text{in} \quad C^{0, \va}([0,T]; L^2(\Omega)) \quad \text{as} \quad n \to +\infty\,.
\end{align}
\end{thm}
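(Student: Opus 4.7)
The plan is standard for gradient-flow discretizations: extract uniform estimates from Theorem \ref{est-mini}, pass to weak limits via Banach--Alaoglu, upgrade to strong compactness in time via a Banach-valued Arzel\`a--Ascoli argument, and finally sharpen $C([0,T];L^2(\Omega))$-convergence to $C^{0,\alpha}([0,T];L^2(\Omega))$ by a short H\"older interpolation.

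First I would set up the uniform bounds and extract weak limits. Since $u_n(\cdot,t)$ is a convex combination of $u_{i-1,n}$ and $u_{i,n}$ on each subinterval $[(i-1)\tau_n,i\tau_n]$, estimate \eqref{u-in-uni-bd} yields $\|\Delta u_n(\cdot,t)\|_{L^2(\Omega)}\le\sqrt{2E(u_0)}$ for every $t$, while \eqref{est-u_n-t} gives $\|\partial_t u_n\|_{L^2(\Omega\times(0,T))}=\|V_n\|_{L^2(\Omega\times(0,T))}\le\sqrt{2E(u_0)}$. Banach--Alaoglu then produces, along a (non-relabeled) subsequence, a limit $u\in L^\infty(0,T;H^2_0(\Omega))$ with $u_t\in L^2(\Omega\times(0,T))$ such that $u_n\rightharpoonup u$ in $L^2(0,T;H^2_0(\Omega))\cap H^1(0,T;L^2(\Omega))$, and a diagonal extraction over $T\in\N$ upgrades this to $u\in L^\infty(\R_+;H^2_0(\Omega))\cap H^1_{loc}(\R_+;L^2(\Omega))$. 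The inequality $\int_0^T\!\int_\Omega u_t^2\,dx\,dt\le 2E(u_0)$ is immediate from weak lower-semicontinuity of the $L^2$-norm.

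Next I would upgrade to strong compactness and pass to the limit in the obstacle condition. Cauchy--Schwarz applied to $u_n(\cdot,t)-u_n(\cdot,s)=\int_s^t V_n\,d\tau$ gives
\begin{equation*}
\|u_n(\cdot,t)-u_n(\cdot,s)\|_{L^2(\Omega)}\le\sqrt{2E(u_0)}\,|t-s|^{1/2},
\end{equation*}
which is uniform equicontinuity of the family $\{u_n\}\subset C([0,T];L^2(\Omega))$ together with a uniform $C^{0,1/2}$ bound. Since each slice $\{u_n(\cdot,t)\}_n$ lies in a bounded subset of $H^2_0(\Omega)$, which embeds compactly into $L^2(\Omega)$ by Proposition \ref{compactness}, the Banach-valued Arzel\`a--Ascoli theorem yields, along a further subsequence, $u_n\to u$ strongly in $C([0,T];L^2(\Omega))$. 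Pointwise a.e. convergence of a subsequence, combined with the a.e.\ obstacle condition on $u_n$, then delivers $u(\cdot,t)\ge f$ a.e. in $\Omega$ for every $t\in[0,+\infty)$.

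Finally, for \eqref{holder-conv}, I would apply the elementary interpolation
\begin{equation*}
\|v\|_{C^{0,\alpha}([0,T];L^2(\Omega))}\le C\,\|v\|_{C([0,T];L^2(\Omega))}^{1-2\alpha}\,\|v\|_{C^{0,1/2}([0,T];L^2(\Omega))}^{2\alpha},\qquad\alpha\in(0,\tfrac{1}{2}),
\end{equation*}
obtained by balancing the bounds $\|v(t)-v(s)\|_{L^2}\le 2\|v\|_{C^0}$ and $\|v(t)-v(s)\|_{L^2}\le\|v\|_{C^{0,1/2}}|t-s|^{1/2}$ at the crossover scale $|t-s|=(2\|v\|_{C^0}/\|v\|_{C^{0,1/2}})^2$. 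Applied to $v=u_n-u$ with the uniform $C^{0,1/2}$-bound from the previous paragraph and the $C^0$-convergence just established, this gives \eqref{holder-conv}. I do not foresee any real obstacle; the only mildly delicate step is the Arzel\`a--Ascoli argument, which is however routine given the available $H^2_0$-bound and Proposition \ref{compactness}.
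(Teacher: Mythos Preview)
Your proposal is correct and follows essentially the same approach as the paper: both use the uniform $H^2_0$ and $L^2$-velocity bounds from Theorem~\ref{est-mini}, the $C^{0,1/2}$ equicontinuity estimate, and an Arzel\`a--Ascoli argument in $L^2(\Omega)$ based on the compact embedding of Proposition~\ref{compactness}. The only cosmetic difference is that the paper invokes the refined Arzel\`a--Ascoli theorem of \cite[Proposition~3.3.1]{AGS} to obtain \eqref{holder-conv} directly, whereas you first obtain $C^0([0,T];L^2(\Omega))$-convergence and then interpolate against the uniform $C^{0,1/2}$ bound; your route is slightly more elementary and self-contained, but the content is the same.
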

\begin{proof}
Recalling that $u_n(x,\cdot)$ is absolutely continuous on $[0,T]$, for all $t_1$, $t_2 \in [0,T]$ with $t_1 < t_2$, 
H\"older's inequality and Fubini's Theorem give us 
\begin{align*}
\Lp{u_n(\cdot, t_2) - u_n(\cdot, t_1)}{2}{\Omega} 
 &= \left( \int^L_0 \left( \int^{t_2}_{t_1} \p{u_n}{t}{}(x,t) \, dt \right)^2 dx \right)^{\frac{1}{2}} \\
 &\le \left( \int^{t_2}_{t_1} \Lp{\p{u_n}{t}{}(\cdot,t)}{2}{\Omega}^2 dt  \right)^{\frac{1}{2}} (t_2 - t_1)^{\frac{1}{2}}. 
\end{align*} 
Then it follows from \eqref{est-u_n-t} that 
\begin{align} \label{u_n-equi-l2}
\int_{t_1}^{t_2}\int_\Omega u_t^2 \,dx\,dt\le 2E(u_0)
\end{align}
and
\begin{align} \label{u_n-equi-conti}
\Lp{u_n(\cdot,t_2) - u_n(\cdot,t_1)}{2}{\Omega} \le \sqrt{2E(u_0)} (t_2 - t_1)^{\frac{1}{2}}. 
\end{align}
Since \eqref{u-in-uni-bd} yields that 
\begin{align} \label{u_n-ub-D2}
\sup_{t \in [0,T]}{\Ln{\Delta u_n(\cdot,t)}{\Omega}} 
 \le \sup_{1 \le i \le n}{\Ln{\Delta u_{i,n}}{\Omega}} \le \sqrt{2 E(u_0)}, 
\end{align}
there exists a function $u \in L^2(0,T;H^2_0(\Omega))$ such that $u_n \rightharpoonup u$ 
in $L^2(0,T;H^2_0(\Omega))$ up to a subsequence. 
On the other hand, the estimate \eqref{est-u_n-t} implies that 
\begin{align} \label{V-conv}
V_n= \p{u_n}{t}{} \rightharpoonup \p{u}{t}{} \quad \text{in} \quad L^2(0,T;L^2(\Omega)). 
\end{align}
This means that $\pd u/\pd t \in L^2(0,T;L^2(\Omega))$, i.e., $u \in H^1(0,T;L^2(\Omega))$. 
Combining \eqref{u_n-equi-conti} with Ascoli-Arzel\`a's Theorem (see e.g. \cite[Proposition 3.3.1]{AGS}), 
we conclude \eqref{holder-conv}. 

Since \eqref{u_n-ub-D2} means that $\{ u_n(t) \}$ is uniformly bounded in $H^{2}_{0}(\Omega)$ 
with respect to $t \in [0,T]$ and $n \in \N$, we deduce from \eqref{holder-conv} that, for each $t \in [0,T]$ 
\begin{align} \label{u_n(t)-D2-w}
u_n(t) \rightharpoonup u(t) \quad \text{in} \quad H^2_0(\Omega) 
\end{align}
up to a subsequence. This asserts that $u \in L^\infty([0,T];H^{2}_{0}(\Omega))$. 
Moreover, Proposition \ref{compactness} implies that for each $t \in [0,T]$ 
\begin{align} \label{u_n-Lq-s}
u_n(t) \to u(t) \quad \text{in} 
\begin{cases}
C^{1,\gm}(\Omega) \quad  \text{for} \quad 0 < \gm < \frac{1}{2} \quad & \text{if} \quad N=1, \\
C^{0,\gm}(\Omega) \quad  \text{for} \quad 0 < \gm < 2-\frac{N}{2} & \text{if} \quad N=2, 3, \\
L^q(\Omega) \quad \,\,\,\,  \text{for} \quad 0 < q < +\infty \quad & \text{if} \quad N=4, \\  
L^q(\Omega) \quad \,\,\,\, \text{for} \quad 0 < q < \frac{2N}{N-4} \quad & \text{if} \quad N \ge 5.
\end{cases}
\end{align}
In particular, if $N \ge 4$,  
\begin{align} \label{u_n-m-c}
u_n(t) \to u(t) \quad \text{a.e. in} \quad \Omega 
\end{align}
up to a subsequence. 
Since $u_n(t) \ge f$ a.e. in $\Omega$ for each $n \in \N$ and $t \in [0,T]$, 
the fact \eqref{u_n-Lq-s}-\eqref{u_n-m-c} yields that $u(t) \ge f$ a.e. in $\Omega$ for each $t \in [0,T]$. 
This completes the proof. 
\end{proof}

When $N=1$, we can improve the convergence result obtained in Theorem \ref{convergence-1}: 
\begin{thm} \label{conv-D1}
Let $N=1$. 
Let $u$ be the function obtained by Theorem \ref{convergence-1}. 
Then it holds that $u \in L^2(0,T;W^{2, \infty}(\Omega)) \cap C^{0, \vb}([0,T]; C^{1,\va}(\Omega))$ and 
\begin{align} 
& u_n \to u \quad \text{weakly{\rm *} in} \quad L^2(0,T;W^{2, \infty}(\Omega))
 \quad \text{as} \quad n \to \infty, \label{L2W2i-N1} \\
& u_n \to u \quad \text{in} \quad C^{0, \vb}([0,T];C^{1,\va}(\Omega)) 
 \quad \text{as} \quad n \to \infty \label{u_n-conv-holder-vb}
\end{align}
for every $\va \in (0, \tfrac{1}{2})$ and $\vb \in (0, \tfrac{1-2\va}{8})$. 
Furthermore $u(\cdot,t) \to u_0$ in $C^{1, \va}(\Omega)$ as $t \downarrow 0$. 
\end{thm}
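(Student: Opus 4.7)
The strategy is to bootstrap the weak convergence of Theorem \ref{convergence-1} by means of the uniform $W^{2,\infty}$ bound \eqref{est-W2infinity} from Theorem \ref{u-W2infinity}, combined with an interpolation argument which is sharp precisely in dimension $N=1$.

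First I would translate \eqref{est-W2infinity} into an estimate for the interpolants. Since $\tilde{u}_n \equiv u_{i,n}$ on $[(i-1)\tau_n,i\tau_n)$ and $u_n(\cdot,t)$ is a convex combination of $u_{i-1,n}$ and $u_{i,n}$ on the same interval, one obtains
\begin{align*}
\int_{0}^{T} \Li{D^2 u_n(\cdot,t)}{\Omega}^2 \, dt \le C
\end{align*}
uniformly in $n$. Viewing $L^\infty(\Omega)$ as the dual of $L^1(\Omega)$, a Banach--Alaoglu type argument provides a subsequence with $D^2 u_n \to \zeta$ weakly-$*$ in $L^2(0,T;L^\infty(\Omega))$. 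Since on the bounded domain $\Omega$ one has $L^2(0,T;L^2(\Omega)) \subset L^2(0,T;L^1(\Omega))$, the weak-$*$ limit pairs with every $L^2(L^2)$ test function in the same way as the weak $L^2(L^2)$ limit $D^2 u$ supplied by Theorem \ref{convergence-1}, forcing $\zeta = D^2 u$. This identifies $u\in L^2(0,T;W^{2,\infty}(\Omega))$ and proves \eqref{L2W2i-N1}.

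For the H\"older continuity \eqref{u_n-conv-holder-vb} I would interpolate between the uniform $H^2_0$-bound \eqref{u_n-ub-D2} and the $\tfrac12$-H\"older estimate \eqref{u_n-equi-conti} in $L^2(\Omega)$. Applying the standard interpolation inequality
\begin{align*}
\Hn{v}{s} \le C\, \Ln{v}{\Omega}^{1-s/2}\, \Hn{v}{2}^{s/2}, \qquad 0 < s < 2,
\end{align*}
to $v = u_n(\cdot,t_2) - u_n(\cdot,t_1)$ yields
\begin{align*}
\Hn{u_n(\cdot,t_2) - u_n(\cdot,t_1)}{s} \le C\, |t_2 - t_1|^{(1-s/2)/2}.
\end{align*}
For $N=1$, the Sobolev embedding $H^s(\Omega)\hookrightarrow C^{1,\va}(\con{\Omega})$ holds whenever $s > 3/2+\va$; choosing $s$ just above $3/2+\va$, the exponent $(1-s/2)/2$ is arbitrarily close to, but strictly below, $(1-2\va)/8$. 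Hence $\{u_n\}$ is equi-H\"older in $C^{0,\vb'}([0,T];C^{1,\va}(\con{\Omega}))$ for every $\vb' \in (0,(1-2\va)/8)$.

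Combining this equicontinuity with the compact embedding $H^2_0(\Omega) \hookrightarrow C^{1,\va}(\con{\Omega})$ from Proposition \ref{compactness}, the Arzel\`a--Ascoli theorem extracts a subsequence converging in $C^0([0,T];C^{1,\va}(\con{\Omega}))$ to the limit $u$ of Theorem \ref{convergence-1}. Fixing $\vb < \vb'$, the elementary interpolation
\begin{align*}
\|u_n - u\|_{C^{0,\vb}([0,T];C^{1,\va})} \le \|u_n - u\|_{C^0([0,T];C^{1,\va})}^{1-\vb/\vb'}\, \|u_n - u\|_{C^{0,\vb'}([0,T];C^{1,\va})}^{\vb/\vb'}
\end{align*}
promotes this to convergence in $C^{0,\vb}([0,T];C^{1,\va}(\con{\Omega}))$, establishing \eqref{u_n-conv-holder-vb}. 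Since $u_n(\cdot,0) = u_0$ for every $n$, passing to the limit gives $u(\cdot,0) = u_0$, and the continuity $u(\cdot,t) \to u_0$ in $C^{1,\va}(\con{\Omega})$ as $t\downarrow 0$ is immediate from the time-H\"older regularity just obtained. The main delicacies I expect are (i) rigorously identifying the weak-$*$ limit in $L^2(0,T;W^{2,\infty}(\Omega))$, a space which is not a classical dual, and (ii) keeping careful track of the interpolation exponents so as to recover exactly the endpoint $(1-2\va)/8$ advertised in \eqref{conti-u-N1}.
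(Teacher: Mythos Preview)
Your proposal is correct and follows the same overall architecture as the paper: use \eqref{est-W2infinity} together with weak-$*$ compactness for \eqref{L2W2i-N1}, then interpolate between the $L^2$-in-space $\tfrac12$-H\"older bound \eqref{u_n-equi-conti} and the uniform $H^2_0$-bound \eqref{u_n-ub-D2} to obtain a uniform $C^{1,\va}$-valued modulus of continuity in time, and conclude by Ascoli--Arzel\`a. The genuine difference lies in how the interpolation is carried out. The paper argues by hand in one dimension: from $\int_\Omega (g')^2 = -\int_\Omega g g''$ and $(g'(x))^2 = \int_0^x ((g')^2)'$ it derives the sharp inequality $\Li{g'}{\Omega} \le \sqrt{2}\,\Ln{g''}{\Omega}^{3/4}\Ln{g}{\Omega}^{1/4}$, which already yields the exponent $\tfrac18$, and then interpolates H\"older seminorms via $|g'|_\va \le |g'|_{1/2}^{2\va}(2\Li{g'}{\Omega})^{1-2\va}$ together with Morrey's inequality to reach the exact exponent $(1-2\va)/8$ in the equicontinuity estimate. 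Your route through the fractional scale $H^s$ and the embedding $H^s \hookrightarrow C^{1,\va}$ for $s>\tfrac32+\va$ is cleaner and more portable to other settings, at the price of never attaining the endpoint exponent and of relying on the fractional interpolation/embedding machinery rather than elementary calculus; since the statement only asks for $\vb$ strictly below $(1-2\va)/8$, nothing is lost. Your explicit passage from $C^0$ to $C^{0,\vb}$ convergence via the H\"older-seminorm interpolation is also a useful addition, as the paper simply invokes \cite[Proposition 3.3.1]{AGS} at that step.
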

\begin{proof}
Fix $T>0$ and $n \in \N$. 
To begin with, we shall prove \eqref{L2W2i-N1}. By \eqref{est-W2infinity} we see that 
$u_n$ is uniformly bounded in $L^2(0,T;W^{2,\infty}(\Omega))$ with respect to $n \in \N$. 
Since $L^2(0,T;W^{2,\infty}(\Omega))$ is the dual of $L^2(0,T;W^{2,1}(\Omega))$, 
Banach-Alaoglu's Theorem asserts that $u_n$ subconverges to $u$ weakly* in $L^2(0,T;W^{2,\infty}(\Omega))$. 
In particular, combining \eqref{est-W2infinity} with   
\begin{align*}
\| u \|_{L^2(0,T;W^{2,\infty}(\Omega))} \le \liminf_{n \to +\infty} \| u_n \|_{L^2([0,T];W^{2,\infty}(\Omega))}, 
\end{align*}
we observe that $u \in L^2(0,T;W^{2,\infty}(\Omega))$. 

Next we prove \eqref{est-W2infinity}. In the sequel we let $\Omega= (0,L)$. 
Let us define the function $g:= u_n(\cdot,t_2)- u_n(\cdot, t_1)$. 
Since $g \in H^2_0(\Omega)$ for each $t_1$, $t_2 \in [0, T]$ with $t_1 < t_2$, we have 
\begin{align} \label{pre-inter-1}
\int_\Omega (g'(x))^2 \, dx 
 = - \int_\Omega g(x) g''(x) \, dx 
 \le \Lp{g}{2}{\Omega} \Lp{g''}{2}{\Omega}, 
\end{align}
and 
\begin{align} \label{pre-inter-2}
(g'(x))^2 = \int^x_0 \{ (g'(x))^2 \}' \, dx \le 2 \Lp{g'}{2}{\Omega} \Lp{g''}{2}{\Omega}. 
\end{align}
Then \eqref{pre-inter-1} and \eqref{pre-inter-2} yield 
\begin{align}
\Li{g'}{\Omega} \le \sqrt{2} \Lp{g''}{2}{\Omega}^{\frac{3}{4}} \Lp{g}{2}{\Omega}^{\frac{1}{4}}. 
\end{align}
Since $\Lp{g''}{2}{\Omega} \le 2 \sup_{i,n}{\Lp{u''_{i,n}}{2}{\Omega}}$, we observe from \eqref{u_n-ub-D2} that  
\begin{align*}
\Li{g'}{\Omega} \le \sqrt{2} ( 2 \sqrt{2 E(u_0)})^{\frac{3}{4}} \Lp{g}{2}{\Omega}^{\frac{1}{4}}. 
\end{align*}
Then, by \eqref{u_n-equi-conti}, we obtain 
\begin{align} \label{u_n'-equi-conti}
\Li{\p{u_n}{x}{}(\cdot,t_2) - \p{u_n}{x}{}(\cdot,t_1) }{\Omega}
 \le 2^{\frac{13}{8}} \sqrt{E(u_0)} (t_2 - t_2)^{\frac{1}{8}}.
\end{align}
Moreover, by the Mean Value Theorem, there exists $\bar{x} \in \Omega$ such that 
\begin{align*}
g(\bar{x})= \dfrac{1}{L} \int^L_0 g(x) \, dx, 
\end{align*}
and then 
\begin{align*}
\av{g(x)} \le \av{g(x) - g(\bar{x})} + \av{g(\bar{x})} 
 \le L \Li{g'}{\Omega} + \dfrac{1}{\sqrt{L}} \Lp{g}{2}{\Omega}
\end{align*}
for each $x \in [0,L]$. 
Thus, by \eqref{u_n-equi-conti} and \eqref{u_n'-equi-conti}, we find 
\begin{align} \label{u_n-i-equi-conti}
\Li{u_n(\cdot,t_2) - u_n(\cdot,t_1)}{\Omega} 
 &\le 2^{\frac{13}{8}} L \sqrt{E(u_0)}  (t_2 - t_1)^{\frac{1}{8}} + \sqrt{\dfrac{E(u_0)}{L}} (t_2 - t_1)^{\frac{1}{2}} \\
 &\le 4 L \sqrt{E(u_0)} \left( 1 + \dfrac{T^{\frac{3}{8}}}{4 \sqrt{L}} \right) (t_2 - t_1)^{\frac{1}{8}}. \notag
\end{align}
Furthermore, for each $\va \in (0, \tfrac{1}{2})$,  we have 
\begin{align} \label{g'-holder}
\av{g'}_\va := \sup{\left\{ \dfrac{\av{g'(x)-g'(y)}}{\av{x-y}^\va} \Bigm| x, y \in \Omega, x \neq y \right\}} 
 \le \av{g'}^{2 \va}_{\frac{1}{2}} (2 \Li{g'}{\Omega})^{1-2\va}. 
\end{align}
Using Morrey's inequality, it is followed from \eqref{u_n-ub-D2} that 
\begin{align*}
\av{\p{u_n}{x}{}(\cdot,t_2) - \p{u_n}{x}{}(\cdot,t_1) }_{\frac{1}{2}} 
 \le K_M \Hn{\p{u_n}{x}{}(\cdot,t_2) - \p{u_n}{x}{}(\cdot,t_1)}{1} 
 \le 2 K_M C_0 \sqrt{E(u_0)},  
\end{align*}
where $K_M$ denotes the constant of Morrey's inequality. 
Then, from \eqref{u_n'-equi-conti} and \eqref{g'-holder}, we deduce that 
\begin{align} \label{u_n'-holder-equi-conti}
\av{\p{u_n}{x}{}(\cdot,t_2) - \p{u_n}{x}{}(\cdot,t_1)}_\va 
 \le 2 \sqrt{E(u_0)} (K_M C_0)^{2\va} \left( 1 + \dfrac{T^{\frac{3}{8}}}{4 \sqrt{L}} \right)^{1-2\va} (t_2 - t_1)^{\frac{1-2\va}{8}}. 
\end{align}

Therefore it follows from \eqref{u_n'-equi-conti}, \eqref{u_n-i-equi-conti}, and \eqref{u_n'-holder-equi-conti}, that 
for every $\va \in (0, \tfrac{1}{2})$, $u_n$ is uniformly equicontinuous with respect to the $C^{1, \va}(\Omega)$-norm topology 
and that 
\begin{align} \label{equi-conti-D1}
\Cn{u_n(\cdot,t_2) - u_n(\cdot,t_1)}{1, \va}{\Omega} 
 \le C (t_2 - t_1)^{\frac{1-2\va}{8}} 
\end{align}
for some $C(L, E(u_0), \va, T)>0$. 
We then obtain \eqref{u_n-conv-holder-vb} by applying the Ascoli-Arzel\`a's Theorem (see e.g. \cite[Proposition 3.3.1]{AGS}). 
Finally, since 
\begin{align*}
\Cn{u_n(\cdot,t) - u_n(\cdot,t_1)}{1, \va}{\Omega} \to 0 \quad \text{as} \quad t \to t_1, 
\end{align*}
we obtain the conclusion by selecting $t_1=0$. 
\end{proof}

When $N=2$, $3$, we can also improve the result obtained in Theorem \ref{convergence-1}: 
\begin{thm} \label{conv-D23}
Let $N=2$, $3$. 
Let $u$ be the function obtained by Theorem \ref{convergence-1}. 
Then it holds that $u \in L^2(0,T; W^{2,\infty}(\Omega)) \cap C^{0, \vb}([0,T] ; C^{0,\gm}(\Omega))$ and 
\begin{align} 
& u_n \to u \quad \text{weakly{\rm *} in} \quad L^2(0,T;W^{2,\infty}(\Omega)) \quad \text{as} \quad n \to + \infty, 
   \label{L2W2i} \\
& u_n \to u \quad \text{in} \quad C^{0, \vb}([0,T]; C^{0,\gm}(\Omega)) \quad \text{as} \quad n \to +\infty 
\label{u_n-conv-holder-D23}
\end{align}
for every 
\begin{align*}
0 < \vb < \left(\frac{1}{2} - \frac{N}{8} \right) \left( 1 - \frac{\gm}{2-N/2} \right), \quad 0 < \gm < 2 - \dfrac{N}{2}. 
\end{align*}
Furthermore $u(\cdot,t) \to u_0$ in $C^{0, \gm}(\Omega)$ as $t \downarrow 0$. 
\end{thm}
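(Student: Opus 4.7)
The plan is to mirror the structure of the proof of Theorem \ref{conv-D1}, replacing the elementary one-dimensional interpolation estimates \eqref{pre-inter-1}--\eqref{pre-inter-2} by a Gagliardo-Nirenberg / Sobolev-Hölder interpolation suited to dimensions $N=2,3$. The weak-$*$ convergence \eqref{L2W2i} is essentially immediate from Theorem \ref{u-W2infinity}: estimate \eqref{est-W2infinity} gives a uniform bound for $u_n$ in $L^2(0,T;W^{2,\infty}(\Omega))$, and since this space is the dual of the separable space $L^2(0,T;W^{2,1}(\Omega))$, Banach-Alaoglu extracts a weakly-$*$ convergent subsequence whose limit must be $u$ by uniqueness of weak limits in $L^2(\Omega\times(0,T))$; weak-$*$ lower semicontinuity then guarantees $u \in L^2(0,T;W^{2,\infty}(\Omega))$.

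The core of the argument is the uniform Hölder equicontinuity of $\{u_n\}$ in time with values in $C^{0,\gm}(\overline\Omega)$. Setting $g := u_n(\cdot,t_2) - u_n(\cdot,t_1)$ for fixed $t_1<t_2$ in $[0,T]$, \eqref{u_n-equi-conti} and \eqref{u_n-ub-D2} give
\begin{align*}
\Ln{g}{\Omega} \le \sqrt{2E(u_0)}\,(t_2-t_1)^{1/2}, \qquad \Hnd{g} \le 2\sqrt{2E(u_0)}.
\end{align*}
The plan is to invoke a Gagliardo-Nirenberg interpolation of the form
\begin{align*}
\Cn{g}{0,\gm}{\overline\Omega} \le C\, \Ln{g}{\Omega}^{1-\theta}\, \Hnd{g}^{\theta},
\end{align*}
valid for any $\theta\in(0,1)$ with $2\theta > N/2 + \gm$. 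This bound follows by first interpolating $[L^2(\Omega), H^2_0(\Omega)]_\theta = H^{2\theta}_0(\Omega)$ and then applying the Sobolev-Hölder embedding $H^{2\theta}_0(\Omega)\hookrightarrow C^{0,\gm}(\overline\Omega)$, which is available precisely when $\gm<2-N/2$. Substituting the two displayed bounds and letting $\theta$ approach $(N/2+\gm)/2$ from above drives the exponent $(1-\theta)/2$ toward $(4-N-2\gm)/8$; a short algebraic identity shows that this coincides with $(\tfrac{1}{2}-\tfrac{N}{8})(1-\tfrac{\gm}{2-N/2})$, yielding
\begin{align*}
\Cn{u_n(\cdot,t_2) - u_n(\cdot,t_1)}{0,\gm}{\overline\Omega} \le C\,(t_2-t_1)^{\vb}
\end{align*}
for any $\vb$ strictly below the stated threshold.

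To conclude \eqref{u_n-conv-holder-D23}, I would combine this equicontinuity with pointwise-in-$t$ relative compactness: the uniform $H^2_0(\Omega)$-bound on $u_n(\cdot,t)$ together with the compact embedding $H^2_0(\Omega)\hookrightarrow C^{0,\gm'}(\overline\Omega)$ of Proposition \ref{compactness} for some $\gm<\gm'<2-N/2$ yields relative compactness of $\{u_n(\cdot,t)\}_n$ in $C^{0,\gm}(\overline\Omega)$ for each $t$. An application of the Ascoli-Arzelà theorem (cf.\ \cite[Proposition 3.3.1]{AGS}) then upgrades this to convergence in $C^{0,\vb}([0,T];C^{0,\gm}(\overline\Omega))$; the limit is identified with $u$ by uniqueness. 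The convergence $u(\cdot,t)\to u_0$ in $C^{0,\gm}(\overline\Omega)$ as $t\downarrow 0$ follows from the equicontinuity bound with $t_1=0$, using $u_n(\cdot,0) = u_0$. The main technical obstacle is the sharpness in the Gagliardo-Nirenberg step: the exponent $\vb=(4-N-2\gm)/8$ arises exactly at the Sobolev-Hölder threshold $2\theta = N/2+\gm$, so the interpolation has to be carried out carefully to recover the precise exponent stated in the theorem; no genuinely new idea beyond the $N=1$ case is required, only a more refined functional-analytic interpolation.
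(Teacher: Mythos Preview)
Your proposal is correct and follows essentially the same route as the paper: the weak-$*$ convergence via Banach--Alaoglu from \eqref{est-W2infinity}, a Gagliardo--Nirenberg/Sobolev interpolation applied to $g=u_n(\cdot,t_2)-u_n(\cdot,t_1)$ to obtain the equicontinuity estimate with exponent $(4-N-2\gm)/8$, and Ascoli--Arzel\`a to conclude. The paper organizes the interpolation slightly differently---first bounding $\Li{g}{\Omega}\le C\Ln{g}{\Omega}^{1-N/4}\Hnd{g}^{N/4}$ and then interpolating the H\"older seminorm via $|g|_\gm\le |g|_{2-N/2}^{\gm/(2-N/2)}(2\Li{g}{\Omega})^{1-\gm/(2-N/2)}$ with $|g|_{2-N/2}$ controlled by $\Hnd{g}$---but this two-step route is equivalent to your one-step fractional Sobolev interpolation and yields the identical exponent.
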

\begin{proof}
Let $N=2$, $3$. Fix $T>0$ and $n \in \N$. 
To begin with, the convergence \eqref{L2W2i} follows from the same line as in the proof of \eqref{L2W2i-N1}. 
In the sequel, we shall prove \eqref{u_n-conv-holder-D23}. 
For each $t_1$, $t_2 \in [0,T]$ with $t_1 < t_2$, set  
\begin{align*}
g(x) := u_n(x,t_2) - u_n(x,t_1). 
\end{align*}
By \eqref{u_n-equi-conti}, we have already known 
\begin{align} \label{g-L2-bd}
\Ln{g}{\Omega} \le (2 E(u_0))^{\frac{1}{2}} (t_2 - t_1)^{\frac{1}{2}}. 
\end{align}
Since \eqref{u-in-uni-bd} asserts that  
\begin{align*}
\Hnd{g} \le 2 (2 E(u_0))^{\frac{1}{2}}, 
\end{align*}
combining this with \eqref{g-L2-bd} and the interpolation inequality 
\begin{align} \label{inter-GN}
\Li{g}{\Omega} \le C \Ln{g}{\Omega}^{1-\frac{N}{4}} \Hnd{g}^{\frac{N}{4}}, 
\end{align}
we obtain 
\begin{align} \label{pre-equi-conti-g-D23}
\Li{g}{\Omega} \le C \Ln{g}{\Omega}^{1-\frac{N}{4}} \le C (t_2 -t_1)^{\frac{1}{2} - \frac{N}{8}},  
\end{align}
where the constant $C$ is independent of $n$. 
For each $\gm \in (0, 2-N/2)$, we obtain 
\begin{align*}
| g |_\gm := \sup\left\{ \dfrac{\av{g(x)-g(y)}}{\av{x-y}^\gm} \biggm| x, y \in \Omega, x \neq y \right\} 
               \le | g |^{\gm/(2-N/2)}_{2-N/2} (2 \Li{g}{\Omega})^{1- \frac{\gm}{2-N/2}}. 
\end{align*}
Since it follows from Sobolev's embedding theorem that 
\begin{align*}
\Cn{g}{0, 2-N/2}{\Omega} \le C \Hnd{g} \le C E(u_0)^{\frac{1}{2}}, 
\end{align*}
we get 
\begin{align} \label{pre-equi-conti-g-D23-2}
| g |_\gm \le C (t_2 - t_1)^{\left(\frac{1}{2} - \frac{N}{8} \right) \left( 1 - \frac{\gm}{2-N/2} \right)}
\end{align}
Therefore we deduce from \eqref{pre-equi-conti-g-D23} and \eqref{pre-equi-conti-g-D23-2} that 
$u_n$ is uniformly equicontinuous with respect to the $C^{0,\gm}$-norm topology for each $\gm \in (0, 2-N/2)$, and that 
\begin{align} \label{equi-conti-D23}
\Cn{u_n(\cdot,t_2) - u_n(\cdot, t_1)}{0, \gm}{\Omega} \le C (t_2 - t_1)^{\left(\frac{1}{2} - \frac{N}{8} \right) \left( 1 - \frac{\gm}{2-N/2} \right)} 
\end{align}
for some constant $C=C(\Omega, E(u_0), \gm, T)>0$. 
By the Ascoli-Arzel\`a's Theorem (see e.g. \cite[Proposition 3.3.1]{AGS}), we get \eqref{u_n-conv-holder-D23}.  
Finally, since 
\begin{align*}
\Cn{u_n(\cdot,t) - u_n(\cdot,t_1)}{0, \gm}{\Omega} \to 0 \quad \text{as} \quad t \to t_1, 
\end{align*}
we obtain the conclusion by selecting $t_1=0$. 
\end{proof}

Regarding the piecewise constant interpolation $\tilde{u}_n$ for $\{ u_{i,n} \}$ defined in Definition \ref{def-piece-inter}, 
we can verify the following: 
\begin{lem} \label{piece-inter}
Let $\tilde{u}_n$ be the piecewise constant interpolation of $\{ u_{i,n} \}$. 
If $N=1$, then 
\begin{align} \label{tilde-u-conv-D1}
\tilde{u}_n \to u \quad \text{in} \quad L^{\infty}([0,T]; C^{1,\gm}(\Omega)) \quad \text{as} \quad n \to +\infty 
\end{align}
for every $\gm \in (0, 1/2)$, where $u$ is the function obtained in Theorem \ref{convergence-1}. 
If $N=2$, $3$, then 
\begin{align} \label{tilde-u-conv-D23}
\tilde{u}_n \to u \quad \text{in} \quad L^\infty([0,T]; C^{0,\gm}(\Omega)) \quad \text{as} \quad n \to +\infty 
\end{align}
for every $\gm \in (0, 2-N/2)$. Furthermore, for any $N \ge 1$, it holds that 
\begin{align} \label{tilde-Delta-u-conv}
\Delta \tilde{u}_n \rightharpoonup \Delta u \quad \text{in} \quad L^2(0,T; L^2(\Omega)) \quad \text{as} \quad n \to +\infty. 
\end{align}
\end{lem}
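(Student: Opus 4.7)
The key observation is that, by Definitions~\ref{def-linear-inter} and \ref{def-piece-inter}, for $t \in [(i-1)\tau_n, i\tau_n)$ one has $\tilde u_n(\cdot,t) = u_{i,n} = u_n(\cdot, i\tau_n)$, so that
\begin{align*}
\tilde u_n(\cdot,t) - u_n(\cdot,t) \;=\; u_n(\cdot, i\tau_n) - u_n(\cdot,t), \qquad |i\tau_n - t| \le \tau_n = T/n.
\end{align*}
Thus the difference $\tilde u_n - u_n$ can be estimated by the modulus of continuity of $u_n$ in time, and the only thing left is to pass to the limit in $u_n$ in the appropriate topology.

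For \eqref{tilde-u-conv-D1}, I would plug $t_2 = i\tau_n$ and $t_1 = t$ into the equicontinuity estimate \eqref{equi-conti-D1} to conclude
\begin{align*}
\sup_{t \in [0,T]} \Cn{\tilde u_n(\cdot,t) - u_n(\cdot,t)}{1,\gm}{\Omega} \le C\, \tau_n^{(1-2\gm)/8} \xrightarrow{n\to\infty} 0
\end{align*}
for any $\gm \in (0,1/2)$. Since Theorem \ref{conv-D1} gives $u_n \to u$ in $C^{0,\vb}([0,T]; C^{1,\gm}(\Omega))$, and in particular in $L^\infty([0,T]; C^{1,\gm}(\Omega))$, the triangle inequality yields \eqref{tilde-u-conv-D1}. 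For \eqref{tilde-u-conv-D23} I would argue identically, using \eqref{equi-conti-D23} in place of \eqref{equi-conti-D1} and Theorem \ref{conv-D23} in place of Theorem \ref{conv-D1}.

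For \eqref{tilde-Delta-u-conv}, which holds in every dimension, I would first observe that the uniform bound \eqref{u-in-uni-bd} together with the definition of $\tilde u_n$ gives
\begin{align*}
\int_0^T \Ln{\Delta \tilde u_n(\cdot,t)}{\Omega}^2 \, dt = \tau_n \sum_{i=1}^{n} \Ln{\Delta u_{i,n}}{\Omega}^2 \le 2T E(u_0),
\end{align*}
so $\tilde u_n$ is bounded in $L^2(0,T;H^2_0(\Omega))$ and admits a weak subsequential limit $\tilde u$ in this space. Next, using $\tilde u_n(\cdot,t) - u_n(\cdot,t) = (i\tau_n - t) V_{i,n}$ on each subinterval together with \eqref{est-u_n-t}, I would estimate
\begin{align*}
\int_0^T \!\! \int_\Omega (\tilde u_n - u_n)^2 \, dx\,dt
\;\le\; \tau_n^2 \int_0^T \!\! \int_\Omega |V_n|^2 \, dx\,dt \;\le\; 2\tau_n^2 E(u_0) \to 0,
\end{align*}
so $\tilde u_n - u_n \to 0$ strongly in $L^2(0,T;L^2(\Omega))$. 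Combining this with the weak convergence $u_n \rightharpoonup u$ from Theorem~\ref{convergence-1} identifies $\tilde u = u$, and therefore $\Delta \tilde u_n \rightharpoonup \Delta u$ in $L^2(0,T;L^2(\Omega))$. The only minor subtlety is that a priori the weak limit $\tilde u$ is only identified from a subsequence, but the $L^2$ strong convergence forces the full sequence to converge, so the weak convergence of $\Delta \tilde u_n$ holds along the original sequence.
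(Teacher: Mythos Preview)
Your proof is correct and follows essentially the same route as the paper: you use the time-equicontinuity estimates \eqref{equi-conti-D1}, \eqref{equi-conti-D23} on $u_n$ to control $\tilde u_n(\cdot,t)-u_n(\cdot,t)=u_n(\cdot,i\tau_n)-u_n(\cdot,t)$ uniformly in $t$, then invoke Theorems~\ref{conv-D1}--\ref{conv-D23} for $u_n\to u$; for \eqref{tilde-Delta-u-conv} you combine the uniform $H^2_0$ bound with $\tilde u_n-u_n\to 0$ in $L^2(0,T;L^2(\Omega))$ to identify the weak limit. The only cosmetic differences are that the paper first posits a limit $\tilde u$ and then identifies it with $u$ (whereas you go directly), and that the paper derives the $L^2$ closeness of $\tilde u_n$ and $u_n$ from the energy telescoping \eqref{pre-V-L2} (obtaining an $L^\infty(0,T;L^2)$ bound of order $\tau_n$) rather than from \eqref{est-u_n-t} as you do; both arguments are equivalent in strength.
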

\begin{proof}
By \eqref{u-in-uni-bd} we see that $\tilde{u}_n \in L^\infty([0,T];H^2_0(\Omega))$. 
Since $N \le 3$, Proposition \ref{compactness} implies that 
\begin{align*}
\tilde{u}_n \in 
\begin{cases}
L^\infty([0,T]; C^{1,\gm}(\Omega)) \quad \text{for} \quad 0 < \gm < \frac{1}{2} \quad & \text{if} \quad N=1, \\
L^\infty([0,T]; C^{0,\gm}(\Omega)) \quad \text{for} \quad 0 < \gm < 2 - \frac{N}{2} \quad & \text{if} \quad N=2, 3. \\
\end{cases}
\end{align*}
Then, along the same line as in the proof of Theorem \ref{convergence-1}, we verify that $\tilde{u}_n(t)$ converges 
to a function $\tilde{u}(t)$, with $\tilde{u}(x,t) \ge f(x)$ in $\Omega$, for each $t \in [0,T]$ 
in $C^{1, \gm}(\Omega)$ if $N=1$ and $C^{0,\gm}(\Omega)$ if $N=2$, $3$. 

We shall show that $\tilde{u}$ coincides with $u$ which is obtained as the limit of $u_n$ . 
Let us fix $t \in [0,T]$ arbitrarily. 
Then there exists a sequence of intervals $\{ [(i_n-1) \tau_n, i_n \tau_n) \}_{n \in \N}$ such that 
$t \in [(i_n-1) \tau_n, i_n \tau_n)$ for each $n \in \N$. 
Recalling Definitions \ref{def-linear-inter}-\ref{def-piece-inter}, 
if $N=1$, we observe from \eqref{equi-conti-D1} that  
\begin{align*}
\Cn{\tilde{u}_n(t) - u_n(t)}{1,\gm}{\Omega} 
& = \Cn{u_{i,n} - u_n(t)}{1,\gm}{\Omega} \\
& = \Cn{u_n(i_n \tau_n) - u_n(t)}{1,\gm}{\Omega} \\
& \le C (i_n \tau_n - t)^{\frac{1-2\gm}{8}} 
   \le C \tau_n^{\frac{1-2\gm}{8}} \to 0 \quad \text{as} \quad n \to + \infty, 
\end{align*}
and if $N=2$, $3$, we deduce from \eqref{equi-conti-D23} that 
\begin{align*}
\Cn{\tilde{u}_n(t) - u_n(t)}{0,\gm}{\Omega} 
& = \Cn{u_n(i_n \tau_n) - u_n(t)}{0,\gm}{\Omega} \\
& \le C \tau_n^{\left(\frac{1}{2} - \frac{N}{8} \right) \left( 1 - \frac{\gm}{2-N/2} \right)} 
\to 0 \quad \text{as} \quad n \to + \infty.  
\end{align*}
Hence we obtain \eqref{tilde-u-conv-D1} and \eqref{tilde-u-conv-D23}. 

Finally we prove \eqref{tilde-Delta-u-conv}. 
It follows from Definitions \ref{def-linear-inter} and \ref{def-piece-inter} that 
\begin{align*}
u_n(x,t) - \tilde{u}_n(x,t) = \dfrac{1}{\tau_n}(t- i \tau_n) (u_{i,n}(x) - u_{i-1,n}(x)), 
\end{align*}
so that, 
\begin{align} \label{u_n-to-tu_n-L2}
&\dfrac{1}{2} \sup_{t \in [0,T]} \int_\Omega \av{u_n(x,t) - \tilde{u}_n(x,t) }^2 \, dx \\
&\quad \le \sum^{n}_{i=1} \sup_{t \in [(i-1) \tau_n, i \tau_n]} 
                 \dfrac{(t - i \tau_n)^2}{\tau_n} \int_\Omega \dfrac{1}{2 \tau_n} (u_{i,n}(x) - u_{i-1,n}(x))^2 \, dx \notag \\
&\quad \le  \tau_n \sum^{n}_{i=1} (E(u_{i-1,n}) - E(u_{i,n})) \notag \\ 
&\quad=  \tau_n (E(u_0) - E(u_{n,n})) 
 \le \tau_n E(u_0) \to 0 \quad \text{as} \quad n \to + \infty. \notag
\end{align}
Then we observe that for any $\vp \in C^\infty_c(\Omega)$  
\begin{align*}
\int^T_0 \!\!\! \int_\Omega (\Delta u_n - \Delta \tilde{u}_n) \vp \, dx dt 
 = \int^T_0 \!\!\! \int_\Omega (u_n - \tilde{u}_n) \Delta \vp \, dx dt 
 \to 0 \quad \text{as} \quad n \to \infty. 
\end{align*}
\end{proof}

Let us define $\mu_n$ as 
\begin{align} \label{def-mu-n}
\mu_n(t)= \mu_{i,n}  \quad \text{if} \quad t \in [(i-1) \tau_n, i \tau_n). 
\end{align}


\smallskip 

\noindent {\it Proof of Theorem \ref{main-thm}.}
Let $u$ be the function in Theorem \ref{convergence-1}. 
To begin with, we prove that $u$ is a weak solution of \eqref{P}. 
Since $u_{i,n}$ and $V_{i,n}$ satisfy 
\begin{align*}
\int_{\Omega} \left[ V_{i,n} (\vp - u_{i,n}) + \Delta u_{i,n} \Delta(\vp - u_{i,n}) \right] \, dx \ge 0 
\end{align*}
for any $\vp \in K$, we observe that 
\begin{align*} 
& \int^{T}_{0}\!\!\!\! \int_{\Omega} \left[ V_{n} (w - \tilde{u}_{n}) 
      + \Delta \tilde{u}_{n} \Delta (w-\tilde{u}_{n}) \right] \, dx dt \\
& \qquad = \sum^{n}_{i=1} \int^{i \tau_{n}}_{(i-1)\tau_{n}} \! \int_{\Omega} 
    \left[ V_{i,n} (w- u_{i,n}) 
      + \Delta u_{i,n} \Delta (w- u_{i,n}) \right] \, dx dt \ge 0,  
\end{align*}
i.e., 
\begin{align} \label{pre-weak}
\int^{T}_{0}\!\!\!\! \int_{\Omega} \left[ V_{n} w + \Delta \tilde{u}_{n} \Delta w \right] \, dx dt 
\ge \int^{T}_{0}\!\!\!\! \int_{\Omega} \left[ V_{n} \tilde{u}_{n} 
      + \av{\Delta \tilde{u}_{n}}^{2} \right] \, dx dt  \quad \text{for all} \quad w \in \mathcal{K}. 
\end{align} 
It follows from \eqref{V-conv} that 
\begin{align} \label{pre-weak-1}
\int^{T}_{0}\!\!\!\! \int_{\Omega} V_{n} w \, dxdt \to \int^{T}_{0}\!\!\!\! \int_{\Omega} u_{t} w \, dxdt 
\quad \text{as} \quad n \to +\infty. 
\end{align}
Moreover Lemma \ref{piece-inter} gives us that 
\begin{align}\label{pre-weak-2}
\int^{T}_{0}\!\!\!\! \int_{\Omega} \Delta \tilde{u}_{n} \Delta w \, dx dt \to 
\int^{T}_{0}\!\!\!\! \int_{\Omega} \Delta u \Delta w \, dx dt \quad 
\text{as} \quad n \to + \infty, 
\end{align}
and 
\begin{align} \label{pre-weak-3}
\liminf_{n \to +\infty}\int^{T}_{0}\!\!\!\! \int_{\Omega} \av{\Delta \tilde{u}_{n}}^{2} \, dx dt 
 \ge \int^{T}_{0}\!\!\!\! \int_{\Omega} \av{\Delta u}^{2} \, dx dt.  
\end{align}
Combining \eqref{holder-conv} with \eqref{u_n-to-tu_n-L2}, we have 
\begin{align} \label{tu_n-to-u-L2}
\tilde{u}_n \to u \quad \text{as} \quad n \to +\infty \quad \text{in} \quad L^2(0,T;L^2(\Omega)). 
\end{align}
Then \eqref{V-conv} and \eqref{tu_n-to-u-L2} imply that 
\begin{align} \label{pre-weak-4}
\int^{T}_{0}\!\!\!\! \int_{\Omega} V_{n} \tilde{u}_{n} \, dx dt  
 \to \int^{T}_{0}\!\!\!\! \int_{\Omega} u_{t} u \, dx dt \quad \text{as} \quad n \to +\infty,    
\end{align}
e.g., see \cite{Z}, Proposition 23.9. 
By virtue of \eqref{pre-weak}--\eqref{pre-weak-3} and \eqref{pre-weak-4}, we assert that  
\begin{align} \label{exist-weak}
\int^{T}_{0}\!\!\!\! \int_{\Omega} \left[ u_{t} (w - u) + \Delta u \Delta (w - u) \right] \, dx dt \ge 0 
\quad \text{for all} \quad w \in \mathcal{K},  
\end{align}
i.e., $u$ is a weak  solution of \eqref{P}. 

For any $\vp \in C^\infty_c(\Omega \times (0,T))$ with $\vp \ge 0$, we verify that $w := u + \vp \in \mathcal{K}$. 
Hence it follows from \eqref{exist-weak} that 
\begin{align} \label{pre-existence-2}
\int^T_0 \!\!\! \int_\Omega \left[ u_t(x,t) \vp(x,t) + \Delta u(x,t) \Delta \vp(x,t) \right] \, dx dt \ge 0.   
\end{align}
Since $\vp$ is arbitrary, \eqref{pre-existence-2} implies that 
\begin{align} \label{D^2u+u_t-positive}
u_{t}(x,t) + \Delta^2 u(x,t) \ge 0 \quad \text{a.e. in} \quad \Omega \times (0,T),  
\end{align}
where $\Delta^2 u$ is written in the sense of distribution. 
Moreover, the regularity of $u$ follows from Theorems \ref{convergence-1}--\ref{conv-D23}.

\smallskip

We now prove \eqref{bdd-measure-sense}.  
By \eqref{def-mu-n} and Theorem \ref{bilaplace-measure}, we observe that  
\begin{align} \label{mu_n-ubd}
\| \mu_n \|_{L^2([0,T];\mM(\Omega))} &:=\int^T_0 \!\!\! \left(\int_\Omega d \mu_n \right)^2 dt \\
& = \sum^n_{i=1} \int^{i \tau_n}_{(i-1) \tau_n} \! \left( \int_\Omega d\mu_{i,n} \right)^2 dt 
 = \tau_n \sum^n_{i=1} \mu_{i,n}(\Omega)^2 < C.  \notag
\end{align}
This implies that 
\begin{align*}
\mu_n \rightharpoonup \con{\mu} \quad \text{weakly in} \quad L^2(0,T; \mM(\Omega))  
\end{align*}
up to a subsequence. 
Setting 
\begin{align*}
\mu := u_{t} + \Delta^2 u,  
\end{align*}
we observe from \eqref{D^2u+u_t-positive} that $\mu$ is a measure on $\Omega\times (0,T)$, and 
there holds $\con{\mu}= \mu$ by uniqueness of the limit. 
Since $\mu_n$ converges to $\mu$ weakly in $L^2(0,T; \mM(\Omega))$, it follows from \eqref{mu_n-ubd} that 
\begin{align*}
\| \mu \|_{L^2(0,T; \mM(\Omega))} \le \liminf_{n \to \infty} \| \mu_n \|_{L^2(0,T; \mM(\Omega))} \le C. 
\end{align*}
This is equivalent to \eqref{bdd-measure-sense}, and implies that $\mu$ is a positive Radon measure 
on $\Omega$ for a.e. $t\in (0,T)$.

\smallskip

Finally, when $N \le 3$, we prove that $u$ satisfies the problem \eqref{P} in the sense of distribution. 
To prove this assertion, it is sufficient to show that, if $u>f$, then $u_{t} + \Delta^2 u =0$ holds. 
Let us set 
\begin{align*}
\mN:= \{ (x,t) \in \Omega \times (0,T) :\  u(x,t) > f(x) \}. 
\end{align*}
Since $u$ is continuous in $\Omega \times (0,T)$ by Theorems \ref{conv-D1} and \ref{conv-D23}, 
$\mN$ is an open set, so that, for any $(x^0, t^0) \in \mN$, there exist 
$\vd>0$ and a neighborhood $W \times (t_1, t_2)$ of $(x^0, t^0)$ such that 
\begin{align} \label{u>f+vd}
u(x,t) - f(x) > \vd \quad \text{in} \quad W \times (t_1, t_2). 
\end{align}
Lemma \ref{piece-inter} implies that there exists a number $N>0$ such that 
\begin{align*}
\tilde{u}_n(x,t) > u(x,t) - \dfrac{\vd}{2} \quad \text{in} \quad W \times (t_1, t_2) \quad \text{for any} \quad n > N. 
\end{align*}
Combining this with \eqref{u>f+vd}, we have, for any $n>N$,  
\begin{align} \label{tilde-u>f+vd/2}
\tilde{u}_n(x,t) > f(x) + \dfrac{\vd}{2} \quad \text{in} \quad W \times (t_1, t_2). 
\end{align}
Let $\vz \in C^\infty_0(W \times (t_1,t_2))$ with $0 \le \vz \le \vd/2$. 
Then \eqref{tilde-u>f+vd/2} asserts that 
\begin{align*}
\psi(x,t):= \tilde{u}_n(x,t) - \vz(x,t) \in K \quad \text{for each} \quad t \in [0,T]. 
\end{align*}
Taking this $\psi$ as $\vp$ in \eqref{vari-ineq-1} and integrating it with respect to $t$ on $(0,T)$, we obtain 
\begin{align}
\int^T_0 \int_\Omega \Delta u_{i,n}(x) \vz(x,t) \, dx dt 
 \le -\int^T_0 \int_\Omega V_{i,n}(x) \vz(x,t)\, dx dt. 
\end{align}
From the definition \eqref{def-mu-n}, the inequality can be reduced to 
\begin{align} \label{pre-mu-n=0}
\sum^{n}_{i=1} \int^{i \tau_n}_{(i-1) \tau_n} \int_\Omega \vz(x,t) d \mu_n dt \le 0. 
\end{align} 
Since $\mu_n \ge 0$, we see that the integral in \eqref{pre-mu-n=0} must be equal to $0$, i.e., 
\begin{align} \label{mu_n=0}
\mu_n(W\times (t_1,t_2)) =0 \,. 
\end{align}
It follows from \eqref{mu_n-ubd} that 
\begin{align*}
\| \mu_n \|_{\mM(\Omega \times (0,T))} := \int^T_0 \!\!\! \int_\Omega d \mu_n dt < C. 
\end{align*}
Thus we deduce that $\mu_n$ converges to $\mu_t$ weakly in $\mM(\Omega \times (0,T))$, i.e., 
\begin{align*}
\int^T_0 \!\!\! \int_\Omega \vp(x,t) d\mu_n dt 
  \to \int^T_0 \!\!\! \int_\Omega \vp(x,t) \, d\mu dt
\end{align*}
for any $\vp \in C^\infty_0(\Omega \times (0,T))$. This fact also yields that 
\begin{align} \label{lower-conti-mu}
\| \mu \|_{\mM(\Omega \times (0,T))} \le \liminf_{n \to +\infty} \| \mu_n \|_{\mM(\Omega \times (0,T))}. 
\end{align}
Combining \eqref{mu_n=0} with \eqref{lower-conti-mu}, we conclude that 
\begin{align}
\mu(W\times (t_1,t_2))=0\,, 
\end{align}
which completes the proof. 
\qed



\end{document}